\DeclareMathOperator{\sub}{sub}
\DeclareMathOperator{\colim}{colim}
\DeclareMathOperator{\with}{\&}
\DeclareMathOperator{\wayb}{{\rotatebox[origin=c]{-90}{$\twoheadrightarrow$}}}
\newcommand{\CC}{\mathcal{C}}
\newcommand{\CP}{\mathcal{P}}
\newcommand{\CPd}{\mathcal{P}^\dag}
\newcommand{\CI}{\mathcal{I}}
\newcommand{\CJ}{\mathcal{J}}
\newcommand{\lam}{\lambda}
\newcommand{\bv}{\sup}
\newcommand{\bw}{\inf}
\newcommand{\ra}{\rightarrow}
\newcommand{\la}{\leftarrow}
\newcommand{\sy}{{\sf y}} 
\newcommand{\syd}{{\sf y}^\dag}
\newcommand{\sV}{{\sf V}}
\newcommand{\oto}{{\to\hspace*{-3.1ex}{\circ}\hspace*{1.7ex}}}
\newtheorem{theorem}{Theorem}[section]
\newtheorem{lemma}[theorem]{Lemma}
\newtheorem{proposition}[theorem]{Proposition}
\newtheorem{corollary}[theorem]{Corollary}
\theoremstyle{definition}
\newtheorem{definition}[theorem]{Definition} 
\newtheorem{example}[theorem]{Example}
\newtheorem{remark}[theorem]{Remark}
\numberwithin{equation}{section}
\begin{document}
	
	%%%%%%%%%%%%%%%%%%%%%%%%%%%%%%%%%%%%%%%%%%%%%%%%%%%%%%%%%%%%
	%%%%%%%%%%%%%%%%%%%%%%%%%%%%%%%%%%%%%%%%%%%%%%%%%%%%%%%%%%%%
	% This a placeholder for the TOPLOGY PROCEEDINGS logo %%%%%%
%	\noindent                                             %%%%%%
%	\begin{picture}(150,36)                               %%%%%%
%		\put(5,20){\tiny{Submitted to}}                       %%%%%%
%		\put(5,7){\textbf{Topology Proceedings}}              %%%%%%
%		\put(0,0){\framebox(140,34){}}                        %%%%%%
%		\put(2,2){\framebox(136,30){}}                        %%%%%%
%	\end{picture}                                         %%%%%%
	%%%%%%%%%%%%%%%%%%%%%%%%%%%%%%%%%%%%%%%%%%%%%%%%%%%%%%%%%%%%
	%%%%%%%%%%%%%%%%%%%%%%%%%%%%%%%%%%%%%%%%%%%%%%%%%%%%%%%%%%%%
%	\vspace{0.5in}

	\renewcommand{\bf}{\bfseries}
	\renewcommand{\sc}{\scshape}
	%insert defs/styles
%	\vspace{0.5in}

	\title[Smyth complete  real-enriched categories]%
	{Smyth complete  real-enriched categories}
	
	%    Information for first author:
	\author{Junche Yu}
	\address{School of Mathematical Sciences, Beihang University, Beijing, China}
	%    Current address (if needed):
	%\curraddr{}
	\email{cqyjc@icloud.com}
	%\thanks{The first author was supported in part by NSF Grant \#000000.}

	%    Information for second author (if needed):
	\author{Dexue Zhang}
	\address{School of Mathematics, Sichuan University, Chengdu, China}
	\email{dxzhang@scu.edu.cn}
%	\thanks{The second author gratefully acknowledges the support of the National Natural Science Foundation of China (No. 12371463).}
	
	%    General info
	%%%%%%%%%%%%%%%%%%%%%%%%%%%%%%%%%%%%%%%%%%%%%%%%%%%
	\subjclass[2020]{18D20, 54A20, 54B30, 54E99}
	%
	%Please use the current 2020 Mathematics Subject Classification:
	%         https://mathscinet.ams.org/mathscinet/msc/msc2020.html
	%         https://zbmath.org/classification/
	%%%%%%%%%%%%%%%%%%%%%%%%%%%%%%%%%%%%%%%%%%%%%%%%%%%

	\keywords{Real-enriched category, Smyth completeness, forward Cauchy net, ideal of a real-enriched category, real-valued topology}
	%\thanks {ALL references are real and correct; ALL citations are imaginary.}
	
	\begin{abstract}  This paper investigates Smyth completeness of categories enriched over a quantale obtained by equipping the unit interval of real numbers with a continuous t-norm. A real-enriched category is Smyth-complete if each of its forward Cauchy nets has a unique limit in the open ball topology of its symmetrization. It is demonstrated that Smyth completeness can be characterized as a categorical property and as a real-valued topological property. Explicitly, it is shown that a real-enriched category is Smyth complete if and only if it is separated and all of its ideals are representable, if and only if its Alexandroff real-valued topology is sober. 
	\end{abstract}
	
	\maketitle
	
	\section{\bf Introduction}
	Smyth completeness originated in the works of Smyth \cite{Smyth88,Smyth94} on quasi-uniform spaces that aimed to provide a common framework for the domain approach and the metric space approach to semantics in computer science. For quasi-metric spaces, Smyth completeness is postulated as follows: a quasi-metric space $X$ is Smyth complete if every forward Cauchy net of $X$ converges uniquely in the symmetrization of $X$ \cite{Goubault,KS2002}. Smyth complete spaces, Smyth complete quasi-metric spaces in particular, have received attention in different areas; to name a few: quasi-Polish spaces  \cite{Brecht}, domain theory \cite{Goubault,KS2002}, quasi-uniform spaces \cite{KSF,Sunderhauf97}, approach spaces \cite{LiZ18a}. 
	
	This paper investigates Smyth completeness for real-enriched categories. A real-enriched category is a category with enrichment coming from a quantale obtained by equipping the interval $[0,1]$ with a continuous t-norm.  Such categories are among the core objects in quantitative domain theory (see e.g. \cite{AW2011,BBR98,Goubault,HW2011,HW2012}) and monoidal topology (see e.g. \cite{GH2013,Hof2007,HST2014,LiZ18a,Lowen2015}). 
	The postulation of Smyth completeness for quasi-metric spaces can be directly extended to the real-enriched context: a real-enriched category is Smyth complete if each of its forward Cauchy nets has a unique limit in the open ball topology of its symmetrization. This paper   presents two characterizations of such categories. The first is purely in terms of categorical structures, the second  in terms of real-valued topological structures. Both characterizations do not make use of the symmetrization  of real-enriched categories.  
	
	The contents are arranged as follows. Section \ref{Smyth completeness} contains some basic notions, including open ball topology, bilimits and Yoneda limits of nets,  and Smyth completeness for real-enriched categories. Section \ref{As a categorical property} characterizes Smyth completeness in terms of categorical structures. It is shown that a real-enriched category is Smyth complete if and only if it is separated and all of its ideals (which may be viewed as ind-objects in the real-enriched context) are representable. This is in parallel to Lawvere's observation in  \cite{Lawvere1973} that a metric space is complete if and only if, as a category enriched over the quantale $([0,\infty]^{\rm op},+,0)$, it is separated and all of its Cauchy weights are representable.  Section \ref{As a real-valued topological property} characterizes Smyth completeness in terms of real-valued topological properties: a real-enriched category is Smyth complete if and only if its Alexandroff real-valued topology is sober. 
	Section \ref{Smyth completable} concerns properties of Smyth completable real-enriched categories, i.e., those categories that can be embedded in a Smyth complete one.
	
	\section{\bf Smyth completeness of real-enriched categories} \label{Smyth completeness}
	
	A \emph{continuous t-norm} \cite{Klement2000} on $[0,1]$ is a  continuous function \[\with\colon[0,1]\times[0,1]\to[0,1],\] called multiplication, such that  $([0,1],\with,1)$ is a unital quantale in the sense of \cite{Rosenthal1990}. Explicitly,  \begin{enumerate}[label=\rm(\roman*)] \item $([0,1],\&,1)$ is a monoid; \item   $\&$ is continuous and  non-decreasing on each variable. \end{enumerate}
	
	Given a continuous t-norm $\with$, the binary function $$\ra\colon[0,1]\times[0,1]\to[0,1],$$ given by $$x\ra z=\sup\{y\in[0,1]\mid x\with y\leq z\},$$ is called the \emph{implication operator} of $\with$. In the language of category theory, $\ra$ is the \emph{internal hom} of  $([0,1],\with,1)$  when viewed as a symmetric monoidal closed category. 
	
	\begin{example}
		Basic  continuous t-norms and their implication operators:
		\begin{enumerate}[label={\rm(\roman*)}]  
			\item  The G\"{o}del t-norm  \[ x\with y= \min\{x,y\}; \quad x\ra z=\begin{cases}
				1 &x\leq z,\\
				z &x>z.
			\end{cases} \]  
			\item  The product t-norm   \[ x\with  y=x\cdot y; \quad x\ra z=\begin{cases}
				1 &x\leq z,\\
				z/x &x>z.
			\end{cases} \]  
			The quantale $([0,1],\cdot,1)$ is isomorphic to  $([0,\infty]^{\rm op},+,0)$.
			
			\item  The {\L}ukasiewicz t-norm \[ x\with  y=\max\{0,x+y-1\}; \quad x\ra z=\min\{1-x+z,1\}. \] 
	\end{enumerate} \end{example}

	Suppose $\with $ is a continuous t-norm. An element $p\in [0,1]$ is   \emph{idempotent}  if $p\with p=p$.
	It is known (see e.g. \cite[Proposition 2.3]{Klement2000}) that if $p$ is idempotent, then $x\with y=  x\wedge y $ whenever $x\leq p\leq y$. From this it follows that for any idempotent elements $p, q$    with $p<q$,  the restriction of $\with $ to $[p,q]$  makes $([p,q],\with,q)$ into a commutative and unital quantale.
	
	A continuous t-norm on $[0,1]$ is \emph{Archimedean} 
	if it has no idempotent element  other than $0$ and $1$. It is  known that a continuous Archimedean t-norm  is either isomorphic to product t-norm or to the {\L}ukasiewicz t-norm, see e.g. \cite{Klement2000}. A fundamental result on continuous t-norms says that every continuous t-norm is an ordinal sum of the product t-norm and the {\L}ukasiewicz t-norm. 
	
	\begin{theorem}{\rm(\cite{Klement2000,Mostert1957})} \label{ordinal sum} For each continuous t-norm $\with$ on $[0,1]$, there exists a countable family of pairwise disjoint open intervals $\{(a_i,b_i)\}_{i\in J}$ of $[0,1]$ such that \begin{enumerate}[label=\rm(\roman*)] \item for each $i\in J$, both $a_i$ and $b_i$ are idempotent and the restriction of $\&$ on the closed interval $[a_i,b_i]$ is either isomorphic to the product t-norm or to the \L ukasiewicz t-norm; \item  $x\with y=\min\{x,y\}$ for any $(x,y)$ outside the union of the squares $[a_i,b_i]^2$. \end{enumerate}  
	\end{theorem}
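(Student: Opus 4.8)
The plan is to reduce the theorem to the known classification of continuous Archimedean t-norms by dissecting $[0,1]$ along the set of idempotents. Write $E=\{p\in[0,1]\mid p\with p=p\}$. The first step is to observe that $E$ is a closed subset of $[0,1]$ containing $0$ and $1$, being the set where the two continuous maps $p\mapsto p\with p$ and $p\mapsto p$ agree. Hence $[0,1]\setminus E$ is open in $[0,1]$ and, omitting the endpoints $0$ and $1$, it is an open subset of $\mathbb{R}$; therefore it decomposes as an at most countable union $\bigcup_{i\in J}(a_i,b_i)$ of pairwise disjoint open intervals. Maximality of the components together with closedness of $E$ forces $a_i,b_i\in E$, so $a_i$ and $b_i$ are idempotent. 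This produces the index set $J$ and the family $\{(a_i,b_i)\}_{i\in J}$ demanded by the statement.

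For (i), fix $i\in J$. By the remark preceding the theorem, restricting $\with$ to $[a_i,b_i]$ yields a commutative unital quantale $([a_i,b_i],\with,b_i)$, which becomes a continuous t-norm on $[0,1]$ after rescaling $[a_i,b_i]$ order-isomorphically onto $[0,1]$. Its idempotents are exactly the images of $E\cap[a_i,b_i]=\{a_i,b_i\}$, that is, $0$ and $1$, so this t-norm is Archimedean; by the classification recalled just before the theorem it is therefore isomorphic to the product t-norm or to the \L ukasiewicz t-norm, and hence so is the restriction of $\with$ to $[a_i,b_i]$.

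For (ii), take $(x,y)$ outside every square $[a_i,b_i]^2$; by commutativity we may assume $x\leq y$, and it suffices to show $x\with y=x$. The key claim is that some idempotent $p$ satisfies $x\leq p\leq y$. If not, then $[x,y]\cap E=\emptyset$, so the connected set $[x,y]$ lies inside $[0,1]\setminus E=\bigcup_{i\in J}(a_i,b_i)$ and hence inside a single component $(a_i,b_i)$; but then $x,y\in[a_i,b_i]$, contradicting $(x,y)\notin[a_i,b_i]^2$. (In the degenerate case $x=y$ the same argument gives $x\in E$, so $p=x$ works.) Given such a $p$, the idempotent-absorption property recorded before the theorem --- namely $u\with v=u\wedge v$ whenever $u\leq p\leq v$ --- yields $x\with y=x\wedge y=x$.

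The only substantive ingredient is the classification of continuous Archimedean t-norms (the Mostert--Shields theorem), which is genuinely deep: it proceeds by producing a continuous additive generator via the structure theory of one-parameter semigroups on an interval, and I would invoke it as a black box rather than reprove it. Everything else is elementary point-set topology together with the two facts about idempotent elements already established in the paragraphs preceding the theorem, so no further obstacle is anticipated.
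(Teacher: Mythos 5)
The paper gives no proof of this theorem --- it is quoted as a classical result of Mostert--Shields and Klement--Mesiar--Pap --- so there is no internal argument to compare against; your derivation is correct and is essentially the standard one from those references: the idempotents form a closed set containing $0$ and $1$, its complement splits into countably many disjoint open intervals with idempotent endpoints, each closed interval $[a_i,b_i]$ rescales to an Archimedean continuous t-norm (hence product or \L ukasiewicz by the classification the paper also invokes), and any pair outside the squares is separated by an idempotent, forcing $x\with y=\min\{x,y\}$. The only genuinely deep input is the Archimedean classification, which you correctly identify and treat as a black box, exactly as the paper does.
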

	
	In this paper $\&$ always denotes a continuous t-norm, unless otherwise specified. Categories enriched over the quantale $([0,1],\with,1)$ are called real-enriched categories, because the enrichment are real numbers. 
	
	\begin{definition}
		A real-enriched category is a pair $(X, \alpha)$, where $X$ is a set,   $\alpha\colon X\times X\to[0,1]$ is a function  such that \begin{enumerate}[label=\rm(\roman*)]  \item   $\alpha(x,x)= 1$ for all $x\in X$;     \item   $ \alpha(y,z)\with \alpha(x,y)\leq \alpha(x,z)$ for all $x,y,z\in X$.  
	\end{enumerate}  \end{definition}
	
	To ease notations, we often  write $X$ for $(X, \alpha)$ and  $X(x,y)$ for $\alpha(x,y)$.  
	
	Suppose $(X,\alpha)$ is a real-enriched category. \begin{itemize} \item The \emph{opposite} of $(X,\alpha)$ refers to the  category $(X, \alpha^{\rm op})$, where   $\alpha^{\rm op}(x,y)=\alpha(y,x)$. \item The  \emph{symmetrization}  of  $(X,\alpha)$  refers to the symmetric (in an evident sense) real-enriched category $(X,S(\alpha))$, where $S(\alpha)(x,y)=\min\{\alpha(x,y),\alpha(y,x)\}$.  \item  
		The \emph{underlying order} of  $X$ refers to the reflexive and transitive relation   $\sqsubseteq$ on $X$ given by $x\sqsubseteq y$ if $X(x,y)=1$.  
		We write $X_0$ for the ordered set $(X,\sqsubseteq)$. \end{itemize}
	
	Two elements $x$ and $y$ of a real-enriched category $X$ are isomorphic if $X(x,y)=1=X(y,x)$.  We say that $X$ is   \emph{separated} if its isomorphic elements are identical. In other words, $X$ is separated if the underlying order $\sqsubseteq$ is anti-symmetric.
	
	\begin{example} The pair $([0,1],\alpha_L)$ is a separated real-enriched category, where $\alpha_L (x,y)= x\ra y$.  The opposite  of $([0,1],\alpha_L)$ is given by $([0,1],\alpha_R)$, where $\alpha_R (x,y)=y\ra x.$    We shall often write $\sV$ for   $([0,1],\alpha_L)$,   and $\sV^{\rm op}$ for   $([0,1],\alpha_R)$, respectively.   
		
		Generally, for each set  $X$ the pair $([0,1]^X,\sub_X)$ is a separated real-enriched category, where   for all $\lam,\mu\in [0,1]^X$,
		$$\sub_X(\lam,\mu) =\inf_{x\in X}(\lam(x)\ra \mu(x)). $$   When we view $\lam$ and $\mu$ as fuzzy  sets, the value $\sub_X(\lam,\mu)$ measures the degree that $\lam$ is contained in $\mu$, so the category $([0,1]^X,\sub_X)$ is also  called the {enriched powerset} of $X$.  \end{example} 
	
	Suppose $X$ and $Y$ are real-enriched categories. A functor   $f\colon X\to Y$  is a map such that  $X(x,y)\leq Y(f(x),f(y))$ for all $x,y\in X$. The category of real-enriched categories and functors is denoted by \[[0,1]\text{-}\mathbf{Cat}.\]  
	
	Suppose $X$ is a real-enriched category. For each $x\in X$ and $r<1$, the set $$B(x,r)\coloneqq\{y\in X\mid X(x,y)>r\}$$ is called the open ball of $X$ with center $x$ and radius $r$. The collection $\{B(x,r)\mid x\in X, r<1\}$  is a base for a topology on $X$, the resulting topology is called the \emph{open ball topology} of $X$. It is clear that the open ball topology of the symmetrization of $X$ is the least common refinement of the open ball topology of $X$ and that of $X^{\rm op}$.
	
	\begin{proposition} \label{limit in open ball top} A net $\{x_i\}_{i\in D}$ of a real-enriched category $X$ converges to $x$ in the open ball topology if and only if $\bv_{i\in D}\inf_{j\geq i}X(x,x_j)=1$. \end{proposition}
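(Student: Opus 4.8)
The plan is to route the argument through the standard fact that a net converges to a point of a topological space if and only if it is eventually in every member of some neighbourhood base at that point, taking as the base at $x$ the family of open balls $\{B(x,r)\mid r<1\}$. Once this is set up, ``$\{x_i\}_{i\in D}$ converges to $x$ in the open ball topology'' reads as ``for every $r<1$ the net is eventually in $B(x,r)$'', and it remains only to rewrite this in terms of the supremum $\bv_{i\in D}\inf_{j\ge i}X(x,x_j)$.

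The first thing to check --- and the only point that is not mere inequality bookkeeping --- is that $\{B(x,r)\mid r<1\}$ really is a neighbourhood base at $x$. That each $B(x,r)$ is an open set containing $x$ is immediate from $X(x,x)=1>r$. For cofinality, I would take an arbitrary open $U$ with $x\in U$, choose a basic open $B(y,s)\subseteq U$ with $x\in B(y,s)$, i.e.\ $X(y,x)>s$, and then use continuity of $\with$: since $1\with X(y,x)=X(y,x)>s$, there is $r<1$ with $r\with X(y,x)>s$. For such $r$ and any $z\in B(x,r)$, the composition axiom gives $X(y,z)\ge X(x,z)\with X(y,x)\ge r\with X(y,x)>s$ (using that $\with$ is non-decreasing), so $B(x,r)\subseteq B(y,s)\subseteq U$. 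This recentring of an arbitrary basic neighbourhood at $x$ itself is exactly where continuity of the t-norm is needed, and it is the main obstacle in the proof.

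With the base in hand, the rest is routine. ``$\{x_i\}$ is eventually in $B(x,r)$'' means there is $i\in D$ with $X(x,x_j)>r$ for all $j\ge i$, whence $\inf_{j\ge i}X(x,x_j)\ge r$; conversely, $\inf_{j\ge i}X(x,x_j)\ge r$ forces the net to be eventually in $B(x,r')$ for every $r'<r$. Quantifying over $r<1$, convergence of $\{x_i\}$ to $x$ is therefore equivalent to: for every $r<1$ there is $i\in D$ with $\inf_{j\ge i}X(x,x_j)\ge r$. Since a supremum in $[0,1]$ is $\ge r$ for all $r<1$ precisely when it equals $1$ --- and if $\bv_{i\in D}\inf_{j\ge i}X(x,x_j)=1$ then, the supremum being strictly above any given $r<1$, some index $i$ already has $\inf_{j\ge i}X(x,x_j)>r$ --- this last condition is exactly $\bv_{i\in D}\inf_{j\ge i}X(x,x_j)=1$, which completes the equivalence.
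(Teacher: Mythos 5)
Your proof is correct, and the paper states this proposition without proof, treating it as routine; your argument is the standard one it implicitly relies on. You correctly isolate the only nontrivial point — recentring a basic neighbourhood $B(y,s)\ni x$ as some $B(x,r)$, which uses continuity of $\with$ together with the composition axiom $X(x,z)\with X(y,x)\leq X(y,z)$ — and the remaining translation between ``eventually in every $B(x,r)$'' and $\sup_{i\in D}\inf_{j\geq i}X(x,x_j)=1$ is handled with the right care about strict versus non-strict inequalities.
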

	
	The following definition collects the notions of Cauchy nets, forward Cauchy nets, bilimits and Yoneda limits of nets that are scattered in the literature, see e.g. \cite{BBR98,Goubault,KS2002,Wagner97}.
	\begin{definition}
		Suppose $\{x_i\}_{i\in D}$ is a net of a real-enriched category $X$.  \begin{enumerate}[label=\rm(\roman*)]  \item $\{x_i\}_{i\in D}$   is   Cauchy (also called  biCauchy) if \[\sup_{i\in D}\inf_{j,k\geq i}X(x_j,x_k)= 1.\] \item $\{x_i\}_{i\in D}$ is  forward Cauchy if \[\sup_{i\in D}\inf_{k\geq j \geq i}X(x_j,x_k)= 1.\]  
			\item A bilimit  of $\{x_i\}_{i\in D}$ is an element  $a$ of $X$ such that  for all $x\in X$, \[\sup_{i\in D}\inf_{j\geq i}X(x,x_j)=X(x,a),\quad  \sup_{i\in D}\inf_{j\geq i}X(x_j,x)=X(a,x).  \] 
			\item A Yoneda limit  of $\{x_i\}_{i\in D}$ is an element  $b$ of $X$ such that  for all $x\in X$, \[  \sup_{i\in D}\inf_{j\geq i}X(x_j,x)=X(b,x).  \] 
	\end{enumerate} \end{definition}
	
	Every Cauchy net is forward Cauchy. Actually, a net of $X$ is Cauchy if and only if it is forward Cauchy both in $X$ and in $X^{\rm op}$, if and only if it is Cauchy in the symmetrization of $X$. A bilimit is obviously a Yoneda limit, but not conversely. A net has at most one Yoneda limit up to isomorphism,   at most one bilimit up to isomorphism.
	
	\begin{lemma}\label{bilimit of Cauchy net} Suppose $\{x_i\}_{i\in D}$ is a Cauchy net and $a$ is an element of a real-enriched category $X$. Then the following   are equivalent:  \begin{enumerate}[label={\rm(\arabic*)}]   \item $a$ is a bilimit of $\{x_i\}_{i\in D}$. \item $a$ is a Yoneda limit of $\{x_i\}_{i\in D}$. \item   $\sup_{i\in D}\inf_{j\geq i}X(a,x_j)= 1$ and $ \sup_{i\in D}\inf_{j\geq i}X(x_j,a)= 1$. 
			\item $\{x_i\}_{i\in D}$ converges to $a$ in the open ball topology of its symmetrization. \end{enumerate} \end{lemma}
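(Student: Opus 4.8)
The plan is to prove the cycle of implications $(1)\Rightarrow(2)\Rightarrow(3)\Rightarrow(4)\Rightarrow(1)$, relying only on the transitivity axiom for $\alpha$, the monotonicity of $\with$ in each variable, the quantale axiom that $\with$ preserves arbitrary suprema in each variable, and Proposition~\ref{limit in open ball top}; no continuity property of $\with$ is needed, and the full Cauchy hypothesis (as opposed to mere forward Cauchyness) will be used only in $(2)\Rightarrow(3)$. The implication $(1)\Rightarrow(2)$ is immediate, since a bilimit is by definition a Yoneda limit. For $(2)\Rightarrow(3)$, substituting $x=a$ in $\sup_{i}\inf_{j\geq i}X(x_j,x)=X(a,x)$ gives $\sup_{i}\inf_{j\geq i}X(x_j,a)=X(a,a)=1$; and substituting $x=x_m$ for a fixed $m\in D$ gives, taking $i'=i$ in the supremum, $X(a,x_m)\geq\inf_{j\geq i}X(x_j,x_m)$ for every $i\in D$, whence $\inf_{m\geq i}X(a,x_m)\geq\inf_{j,m\geq i}X(x_j,x_m)$, and taking the supremum over $i$ and invoking Cauchyness yields $\sup_{i}\inf_{m\geq i}X(a,x_m)=1$.

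For $(3)\Rightarrow(4)$, by Proposition~\ref{limit in open ball top} applied to the symmetrization $(X,S(\alpha))$ it suffices to verify $\sup_{i}\inf_{j\geq i}\min\{X(a,x_j),X(x_j,a)\}=1$. Since $\inf$ commutes with binary $\min$, the inner term is $\min\{\inf_{j\geq i}X(a,x_j),\inf_{j\geq i}X(x_j,a)\}$; as $i$ ranges over $D$ these two coordinates form monotone nets each with supremum $1$ by (3), and a standard cofinality argument (given $r<1$, choose an index above witnesses for both suprema exceeding $r$) gives the claim. Conversely $(4)\Rightarrow(3)$ holds at once: by Proposition~\ref{limit in open ball top} condition (4) reads $\sup_{i}\inf_{j\geq i}S(\alpha)(a,x_j)=1$, and since $S(\alpha)(a,x_j)$ lies below both $X(a,x_j)$ and $X(x_j,a)$, monotonicity of $\sup\inf$ yields (3).

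It remains to prove $(3)\Rightarrow(1)$, i.e.\ to deduce from (3) the two defining identities of a bilimit at an arbitrary $x\in X$. The ``$\geq$'' halves follow from transitivity in the forms $X(x_j,x)\geq X(a,x)\with X(x_j,a)$ and $X(x,x_j)\geq X(a,x_j)\with X(x,a)$ together with sup-preservation of $\with$: one gets $\sup_{i}\inf_{j\geq i}X(x_j,x)\geq X(a,x)\with\bigl(\sup_{i}\inf_{j\geq i}X(x_j,a)\bigr)=X(a,x)$ and likewise $\sup_{i}\inf_{j\geq i}X(x,x_j)\geq\bigl(\sup_{i}\inf_{j\geq i}X(a,x_j)\bigr)\with X(x,a)=X(x,a)$. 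For the ``$\leq$'' halves, fix $i\in D$: for every $i'\geq i$ and every $j\geq i'$, transitivity gives $X(a,x)\geq X(x_j,x)\with X(a,x_j)\geq\bigl(\inf_{k\geq i}X(x_k,x)\bigr)\with\bigl(\inf_{k\geq i'}X(a,x_k)\bigr)$; taking the supremum over $i'\geq i$, pulling the constant first factor out of the law $p\with\sup S=\sup_{s\in S}(p\with s)$, and using that this cofinal set of indices does not change the monotone supremum $\sup_{i'}\inf_{k\geq i'}X(a,x_k)=1$, one obtains $X(a,x)\geq\inf_{k\geq i}X(x_k,x)$, hence $X(a,x)\geq\sup_{i}\inf_{j\geq i}X(x_j,x)$; the identity $X(x,a)=\sup_{i}\inf_{j\geq i}X(x,x_j)$ is obtained symmetrically from $X(x,a)\geq X(x_j,a)\with X(x,x_j)$. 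Thus $a$ is a bilimit.

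I expect the only delicate point to be the ``$\leq$'' halves of $(3)\Rightarrow(1)$, where two coupled directed indices must be handled simultaneously and the constant factor isolated before the distributivity of $\with$ over suprema is applied; the remaining ingredients — that $\inf$ commutes with binary $\min$, and that restricting a monotone net to a cofinal subset leaves its supremum unchanged — are routine.
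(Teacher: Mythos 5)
Your proposal is correct and follows essentially the same route as the paper: $(1)\Rightarrow(2)$ trivially, $(2)\Rightarrow(3)$ by specializing the Yoneda-limit identity and invoking Cauchyness, $(3)\Leftrightarrow(4)$ via Proposition \ref{limit in open ball top} applied to the symmetrization, and $(3)\Rightarrow(1)$ by a sandwich argument using transitivity and the sup-preservation of $\with$. The only cosmetic difference is bookkeeping: where the paper merges two sup-infs into a single one over a common index, you fix one index and take the supremum over a cofinal set in the other, and you spell out the cofinality argument behind $(3)\Leftrightarrow(4)$ that the paper states as a fact.
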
 
	
	\begin{proof} $(1)\Rightarrow(2)$   trivial.
		
		$(2)\Rightarrow(3)$ We only need to check that   $\sup_{i\in D}\inf_{j\geq i}X(a,x_j)= 1.$ For this we calculate:  \begin{align*}\sup_{i\in D}\inf_{j\geq i}X(a,x_j) &= \sup_{i\in D}\inf_{j\geq i} \sup_{h\in D}\inf_{k\geq h} X(x_k,x_j)\\ &\geq \sup_{i\in D}\inf_{k,j\geq i} X(x_k,x_j) \\ &= 1.\end{align*} 
		
		$(3)\Leftrightarrow(4)$ This follows from Proposition \ref{limit in open ball top} and  the fact  that the requirement in (3) is equivalent to that $$\sup_{i\in D}\inf_{j\geq i}\min\{X(a,x_j),X(x_j,a)\}= 1.$$
		
		$(3)\Rightarrow(1)$ For all $x\in X$, we calculate: \begin{align*} X(x,a)&= \Big(\sup_{i\in D}\inf_{j\geq i}X(a,x_j)\Big)\with X(x,a) \\ &\leq  \sup_{i\in D}\inf_{j\geq i} X(a,x_j) \with X(x,a)  \\ &\leq  \sup_{i\in D}\inf_{j\geq i}X(x,x_j)   \\ &=  \Big( \sup_{i\in D}\inf_{j\geq i}X(x_j,a)\Big) \with \Big(\sup_{i\in D}\inf_{j\geq i}X(x,x_j)\Big) \\ &= \sup_{i\in D}\inf_{j\geq i} X(x_j,a)\with X(x,x_j)    \\ &\leq X(x,a),\end{align*} which implies   $\sup_{i\in D}\inf_{j\geq i}X(x,x_j)=X(x,a)$.  The other equality is verified likewise. So  $a$ is a bilimit of $\{x_i\}_{i\in D}$.  \end{proof}

	Proposition \ref{limit in open ball top}  implies that if a net of $X$ converges in the open ball topology of its symmetrization, then it is a Cauchy net of $X$.
	Now we introduce the central notion of this paper.
	
	\begin{definition}A real-enriched category  is Smyth complete if each pf its forward Cauchy nets  converges uniquely in the open ball topology of its symmetrization. \end{definition}
	
	Smyth completeness originated in the works of Smyth \cite{Smyth88,Smyth94} on quasi-uniform spaces. The above postulation is a direct extension of that for Smyth complete quasi-metric spaces in \cite{Goubault,KS2002}.
	
	\begin{example}This example concerns Smyth completeness of the real-enriched categories  ${\sf V}=([0,1],\alpha_L)$ and ${\sf V}^{\rm op}=([0,1],\alpha_R)$.  We distinguish three cases.  
		
		{\bf Case 1}. The continuous t-norm is not Archimedean. Pick some $a\in[0,1]$ with $0<a\leq a^+<1$, where $a^+$ is the least idempotent element in $[a,1]$. Since $\bv_{x<a}(a\ra x)\leq a^+$, it follows that $[a,1]$ is open in $\sV$ and $[0,a]$ is open in $\sV^{\rm op}$, then $\{a\}$ is open in the symmetrization of $\sV$.  Since $\{a-1/n\}_n$ is forward Cauchy in $\sV$ and $\{a+1/n\}_n$ is forward Cauchy in $\sV^{\rm op}$, neither $\sV$ nor $\sV^{\rm op}$ is Smyth complete in this case.
		
		{\bf Case 2}. The continuous t-norm is isomorphic to the {\L}ukasiewicz t-norm. In this case, the open ball topology of $\sV$ is $\{\varnothing,[0,1]\}\cup\{(a,1]\mid a<1\}$, the open ball topology of $\sV^{\rm op}$ is $\{\varnothing,[0,1]\}\cup\{[0,a)\mid a>0\}$.  It is easily seen that a net $\{x_i\}_{i\in D}$ is forward Cauchy in $\sV$ if and only if it is  Cauchy in the usual sense, if and only if it  is forward Cauchy in $\sV^{\rm op}$, so, both $\sV$ and $\sV^{\rm op}$ are Smyth complete. 
		
		{\bf Case 3}. The continuous t-norm is isomorphic to the product t-norm. In this case, the open ball topology of $\sV$ is $\{\varnothing,[0,1]\}\cup\{(a,1]\mid a<1\}$, the open ball topology of $\sV^{\rm op}$ is $\{\varnothing,\{0\},[0,1]\}\cup\{[0,a)\mid a>0\}$. Since the sequence  $\{1/n\}_{n\geq1}$ is forward Cauchy  in $\sV^{\rm op}$, but not convergent in its symmetrization, $\sV^{\rm op}$ is not Smyth complete. It is not hard to verify that a net $\{x_i\}_{i\in D}$ is forward Cauchy in $\sV$ if and only if it is eventually constant or converges (in the usual sense) to some element other than $0$, so $\sV$ is Smyth complete.   \end{example} 
	
	\begin{proposition}\label{FC in Smyth is C} If a real-enriched category $X$ is Smyth complete, then 
		it is separated and all of its forward Cauchy nets are Cauchy.  \end{proposition}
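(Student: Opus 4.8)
The plan is to treat the two conclusions separately, using only the definitions together with Proposition~\ref{limit in open ball top} and the remark recorded just before the definition of Smyth completeness, namely that any net convergent in the open ball topology of the symmetrization is a Cauchy net of $X$.

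\textbf{Separatedness.} I would argue by contraposition. If $X$ is not separated, choose distinct $x,y\in X$ with $X(x,y)=1=X(y,x)$, so that $S(\alpha)(x,y)=1$. The constant net with value $x$, say over the one-point directed set, is trivially forward Cauchy; by Proposition~\ref{limit in open ball top} applied to the symmetrization it converges in the open ball topology of the symmetrization both to $x$ (since $S(\alpha)(x,x)=1$) and to $y$ (since $S(\alpha)(y,x)=\min\{X(y,x),X(x,y)\}=1$). This contradicts the uniqueness clause of Smyth completeness, so $X$ is separated.

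\textbf{Forward Cauchy nets are Cauchy.} Let $\{x_i\}_{i\in D}$ be a forward Cauchy net of $X$. By Smyth completeness it converges, to some $a\in X$, in the open ball topology of the symmetrization; by the remark recalled above, any such net is a Cauchy net of $X$, and we are done. If one prefers to spell this out rather than quote the remark, one extracts from the convergence (using $S(\alpha)(a,x_j)\le X(a,x_j)$ and $S(\alpha)(a,x_j)\le X(x_j,a)$) the two conditions $\sup_i\inf_{j\ge i}X(a,x_j)=1$ and $\sup_i\inf_{j\ge i}X(x_j,a)=1$, applies the triangle inequality $X(x_j,x_k)\ge X(a,x_k)\with X(x_j,a)$ to get $\inf_{j,k\ge i}X(x_j,x_k)\ge\bigl(\inf_{k\ge i}X(a,x_k)\bigr)\with\bigl(\inf_{j\ge i}X(x_j,a)\bigr)$, and then passes to the supremum over $i$, aligning the two inner infima by directedness of $D$.

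I do not expect a genuine obstacle here; the statement follows quickly once the right facts are lined up. The only point demanding a little care is the very last step of the self-contained argument — interchanging $\with$ with the infima and the supremum over the directed set — which is where joint continuity of the t-norm at $(1,1)$, together with $1\with 1=1$, is used.
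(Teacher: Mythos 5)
Your proof is correct and is exactly the argument the paper intends: the proposition is stated there without proof, resting on the remark preceding the definition of Smyth completeness (a net convergent in the open ball topology of the symmetrization is Cauchy in $X$) together with uniqueness of limits for separatedness, both of which you supply in full. One minor observation: in the final interchange you do not actually need continuity of $\with$ at $(1,1)$ --- since the two inner infima are non-decreasing in $i$ and $\with$ distributes over suprema in the quantale, directedness of $D$ already yields $\sup_i\bigl(a_i\with b_i\bigr)=\bigl(\sup_i a_i\bigr)\with\bigl(\sup_i b_i\bigr)=1$.
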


	\section{\bf As a categorical property} \label{As a categorical property}
	
	In this section we demonstrate that Smyth completeness, though postulated in terms of topological properties, can be characterized purely in terms of categorical structures. Precisely, a real-enriched category is Smyth complete if and only if it is separated and all of its ideals (to be defined later) are representable. To state and prove the result (Theorem \ref{Smyth via bilimit}), we need a few basic notions and results in the theory of quantale-enriched categories. To fix notations and to make the paper more or less self-contained, we begin  with some basic ideas about quantale-enriched categories: distributors, colimits, Cauchy weights, and etc. Readers familiar with  quantale-enriched categories or the general theory of enriched categories \cite{Borceux1994,Kelly} may skip the materials before Proposition \ref{colimit =bilimit for Cauchy net}.
	
	\begin{definition}
		Suppose $X$ and $Y$ are real-enriched categories. A distributor $\phi$ from $X$ to $Y$, written $\phi\colon X\oto Y$, is a map $\phi\colon X\times Y\to [0,1]$ such that \begin{enumerate}[label={\rm(\roman*)}]  
			\item for all $x_1,x_2\in X$ and $y\in Y$, $\phi(x_2,y)\with X(x_1,x_2)\leq \phi(x_1,y)$;  \item for all $x\in X$ and $y_1,y_2\in Y$,   $Y(y_1,y_2)\with\phi(x,y_1)\leq \phi(x,y_2)$.  \end{enumerate}      \end{definition} 
	
	Given a distributor $\phi\colon X\oto Y$, the distributor  $$\phi^{\rm op}\colon Y^{\rm op}\oto X^{\rm op}, \quad \phi^{\rm op} (y,x)=\phi(x,y)$$ is called the opposite of $\phi$.  Distributors can be composed: for $\phi\colon X\oto Y$ and $\psi\colon Y\oto Z$, $$\psi\circ\phi(x,z)=\bv_{y\in Y}\psi(y,z)\with\phi(x,y).$$ With real-enriched categories as objects and distributors as morphisms we have a category, indeed a quantaloid (see \cite{Rosenthal1996,Stubbe2005} for definition)  \[[0,1]\text{-}\mathbf{Dist},\] called the category of  distributors \cite{Stubbe2005}. For each real-enriched category $X$, the identity distributor on $X$ is its categorical structure, which will also be denoted by $X$.
	
	Since $[0,1]\text{-}\mathbf{Dist}$ is a quantaloid, for each distributor  $\phi\colon X\oto Y$,   the map \[-\circ \phi\colon [0,1]\text{-}\mathbf{Dist}(Y,Z)\to [0,1]\text{-}\mathbf{Dist}(X,Z)\] has a right adjoint \[-\swarrow \phi\colon [0,1]\text{-}\mathbf{Dist}(X,Z)\to[0,1]\text{-}\mathbf{Dist}(Y,Z).\]  
	$$\bfig \ptriangle(0,0)/>`>`>/<520,500>[X`Y`Z;\phi`\xi`\xi\swarrow \phi]
	\place(260,500)[\circ] \place(265,255)[\circ] \place(0,255)[\circ] \place(150,350)[\geq]
	\efig$$
	Likewise, for each distributor  $\psi\colon Y\oto Z$, the map \[\psi\circ -\colon [0,1]\text{-}\mathbf{Dist}(X,Y)\to [0,1]\text{-}\mathbf{Dist}(X,Z)\] has a right adjoint \[\psi\searrow -\colon [0,1]\text{-}\mathbf{Dist}(X,Z)\to[0,1]\text{-}\mathbf{Dist}(X,Y). \] 
	$$\bfig \qtriangle(0,0)/>`<-`<-/<520,500>[Y`Z`X;\psi`\psi\searrow \xi`\xi]
	\place(260,500)[\circ] \place(257,255)[\circ] \place(520,255)[\circ] \place(350,350)[\leq]
	\efig$$ 
	
	Given a pair of distributors $\phi\colon Y\oto X$ and $\psi\colon X\oto Y$, we say   $\phi$ is right adjoint to   $\psi$,  or $\psi$ is left adjoint to $\phi$, and write $\psi\dashv\phi$, if $X\leq \phi\circ\psi$ and $\psi\circ\phi\leq Y$. 
	
	The following simple lemma is  useful in the calculus of distributors.
	
	\begin{lemma} \label{adjoint_arrow_calculation}  
		Suppose   $\psi\colon X\oto Y$ is left adjoint to $\phi\colon Y\oto X$.  Then
		\begin{enumerate}[label={\rm(\roman*)}]  
			\item For each distributor $\xi\colon Y\oto Z$, $\xi\circ \psi=\xi\swarrow \phi$. In particular, $\psi=Y\swarrow\phi$.
			
			\item  For each distributor $\lambda\colon W\oto Y$, $\phi\circ \lambda =\psi\searrow \lambda$. In particular, $\phi=\psi\searrow Y$. \end{enumerate}  \end{lemma}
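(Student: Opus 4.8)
The plan is to derive both identities by pure quantaloid algebra in $[0,1]\text{-}\mathbf{Dist}$, using only: associativity and (two-sided) monotonicity of distributor composition; the defining adjunctions $-\circ\phi\dashv-\swarrow\phi$ and $\psi\circ-\dashv\psi\searrow-$; and the two inequalities $X\leq\phi\circ\psi$ and $\psi\circ\phi\leq Y$ encoded in $\psi\dashv\phi$. No pointwise manipulation of the formulas for $\circ$, $\swarrow$, $\searrow$ will be needed: each of the four inequalities below is a one-line chase, and the ``in particular'' clauses follow by instantiating $\xi$, resp. $\lambda$, at the identity distributor.

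For (i) I would establish $\xi\circ\psi\leq\xi\swarrow\phi$ and $\xi\swarrow\phi\leq\xi\circ\psi$ separately. For the first, by the universal property of $-\swarrow\phi$ it suffices to check $(\xi\circ\psi)\circ\phi\leq\xi$, which follows from associativity and $\psi\circ\phi\leq Y$ since $(\xi\circ\psi)\circ\phi=\xi\circ(\psi\circ\phi)\leq\xi\circ Y=\xi$. For the reverse inequality I would start from the counit inequality $(\xi\swarrow\phi)\circ\phi\leq\xi$, compose on the right with $\psi$, and then use $X\leq\phi\circ\psi$ together with monotonicity to obtain
$$\xi\swarrow\phi=(\xi\swarrow\phi)\circ X\leq(\xi\swarrow\phi)\circ(\phi\circ\psi)=\big((\xi\swarrow\phi)\circ\phi\big)\circ\psi\leq\xi\circ\psi.$$
Instantiating $\xi$ as the identity distributor $Y$ and using $Y\circ\psi=\psi$ gives the special case $\psi=Y\swarrow\phi$.

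Part (ii) is the formal dual: throughout one replaces the adjunction $-\circ\phi\dashv-\swarrow\phi$ by $\psi\circ-\dashv\psi\searrow-$ and swaps left/right composition. Concretely, $\phi\circ\lambda\leq\psi\searrow\lambda$ reduces, via the universal property of $\psi\searrow-$, to $\psi\circ(\phi\circ\lambda)=(\psi\circ\phi)\circ\lambda\leq Y\circ\lambda=\lambda$; and the reverse inequality comes from composing the counit $\psi\circ(\psi\searrow\lambda)\leq\lambda$ on the left with $\phi$ and invoking $X\leq\phi\circ\psi$ to get $\psi\searrow\lambda\leq(\phi\circ\psi)\circ(\psi\searrow\lambda)=\phi\circ\big(\psi\circ(\psi\searrow\lambda)\big)\leq\phi\circ\lambda$. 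Taking $\lambda=Y$ yields $\phi=\psi\searrow Y$.

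There is no genuine obstacle here; the only point requiring care is the bookkeeping of variances, i.e.\ checking that each composite and each lifting lands in the intended hom-poset $[0,1]\text{-}\mathbf{Dist}(-,-)$ and that the counits of the two adjunctions are applied with their arguments in the correct slots. Once the types are lined up the argument is routine.
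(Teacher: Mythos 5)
Your proof is correct: both directions of each equality follow exactly as you say from the unit/counit inequalities $X\leq\phi\circ\psi$, $\psi\circ\phi\leq Y$ and the adjunctions defining $\swarrow$ and $\searrow$, and all the types line up. The paper states this lemma without proof, and your argument is the standard one the authors leave implicit, so there is nothing to compare beyond noting that your write-up supplies the omitted details correctly.
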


	%\begin{proof}  We prove (i)  for example. On the one hand, since $\xi\circ\psi\circ\phi \leq \xi\circ Y= \xi$, then $\xi\circ\psi\leq\xi\swarrow \phi$.  On the other hand, since  $(\xi\swarrow \phi)\circ\phi\circ\psi\leq\xi\circ\psi$, then $\xi\swarrow \phi\leq \xi\circ\psi$.  \end{proof}
	
	Therefore, a distributor has at most one right (left, resp.) adjoint. So, we  speak of \emph{the right (left, resp.) adjoint} of a distributor.
	
	Fr each functor $f\colon X\to Y$, the distributor  \[f_*\colon X\oto Y, \quad f_*(x,y)=Y(f(x),y)\] is called the graph of $f$, the distributor \[f^*\colon Y\oto X, \quad f^*(y,x)=Y(y,f(x))\] is called the  cograph  of $f$. It is known that the graph  $f_* $ is left adjoint to the cograph $f^* $. Furthermore, $f\colon X\to Y$ is fully faithful (i.e., $X(x_1,x_2)=Y(f(x_1),f(x_2))$ for all $x_1,x_2\in X$) if and only if $f^*\circ f_*=X$.

	By a \emph{weight} of a real-enriched category $X$ we mean a distributor of the form $\phi\colon X\oto \star$, where $\star$ is the terminal real-enriched category. Or equivalently, a weight of $X$ is a functor $\phi\colon X^{\rm op}\to \sV$, where $\sV=([0,1],\alpha_L)$.  All weights of $X$ constitute a separated real-enriched category $\mathcal{P}X$ with $$\mathcal{P}X(\phi_1,\phi_2) =\phi_2\swarrow\phi_1=\sub_X(\phi_1,\phi_2) .$$ 
	
	The assignment $X\mapsto\CP X$ is functorial.  For each functor $f\colon X\to Y$,   it is easily verified that  $$f^\rightarrow\colon\CP  X\to\CP Y, \quad \phi\mapsto \phi\circ f^*$$ is a functor between real-enriched categories.  Assigning to each $f\colon X\to Y$ the functor $f^\ra$  defines a functor $$\CP\colon[0,1]\text{-}\mathbf{Cat}\to[0,1]\text{-}\mathbf{Cat}. $$ 
	
	Dually,  distributors of the form $\star\oto X$ are called  \emph{coweights} of $X$. A coweight of $X$ is essentially a  functor $\psi\colon X\to \sV$ with $\sV=([0,1],\alpha_L)$. All coweights of $X$ constitute a real-enriched category $\CPd X$ with $$\CPd X(\psi_1,\psi_2) =\psi_2\searrow\psi_1=\sub_X(\psi_2,\psi_1) .$$     
	
	For each element $a$ of a real-enriched category $X$, the assignment $x\mapsto X(x,a)$ defines a weight of $X$, the assignment $x\mapsto X(a,x)$ defines a coweight of $X$. Weights of the form $X(-,a)$ and   coweights of the form $X(a,-)$ are said to be  \emph{representable}.  The  coweight $X(a,-)$ is left adjoint to the   weight $X(-,a)$.

	The following lemma, a special case of the Yoneda lemma in the theory of enriched categories, implies that for each real-enriched category $X$,  both \[\sy\colon X\to\CP X, \quad a\mapsto X(-,a)\] and \[\syd\colon X\to\CPd  X, \quad a\mapsto X(a,-)\] are fully faithful functors, they are called the Yoneda embedding and the co-Yoneda embedding, respectively.  
	
	\begin{lemma}[Yoneda lemma] Suppose $X$ is a real-enriched category and $a\in X$. \begin{enumerate}[label={\rm(\roman*)}]  
			\item For each   $\phi\in\CP X$, $\CP X(\sy(a),\phi)=\phi(a)$.  
			\item   For each   $\psi\in\CPd  X$, $\CPd  X(\psi,\syd(a))=\psi(a)$. \end{enumerate}   \end{lemma}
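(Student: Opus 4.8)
The plan is to unwind both sides of each identity directly, reducing it to two inequalities: one ``easy'' inequality obtained by specializing the defining infimum to the element $a$ itself (using $X(a,a)=1$ together with the fact that the internal hom satisfies $1\ra z=z$ for all $z\in[0,1]$), and one ``structural'' inequality that is nothing more than the distributor axiom satisfied by $\phi$ (resp.\ $\psi$). The only subtlety is keeping the variance straight, since $\CP X$ and $\CPd X$ order their elements in opposite ways, so I would first rewrite the right adjoints $\swarrow$ and $\searrow$ in terms of $\sub_X$ before comparing.

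Concretely, for (i): by definition $\CP X(\sy(a),\phi)=\phi\swarrow\sy(a)=\sub_X\big(X(-,a),\phi\big)=\inf_{x\in X}\big(X(x,a)\ra\phi(x)\big)$. Taking $x=a$ in this infimum gives $X(a,a)\ra\phi(a)=1\ra\phi(a)=\phi(a)$, whence $\CP X(\sy(a),\phi)\leq\phi(a)$. For the reverse inequality it suffices to check $\phi(a)\leq X(x,a)\ra\phi(x)$ for every $x\in X$, which by the adjunction defining $\ra$ is equivalent to $\phi(a)\with X(x,a)\leq\phi(x)$; this is precisely condition~(i) in the definition of the distributor $\phi\colon X\oto\star$ (take $x_2=a$, $x_1=x$). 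Part (ii) is handled symmetrically: $\CPd X(\psi,\syd(a))=\syd(a)\searrow\psi=\sub_X\big(X(a,-),\psi\big)=\inf_{x\in X}\big(X(a,x)\ra\psi(x)\big)$, the $x=a$ term equals $\psi(a)$, and for each $x$ the inequality $\psi(a)\with X(a,x)\leq\psi(x)$ is exactly condition~(ii) for the distributor $\psi\colon\star\oto X$ (equivalently, the statement that $\psi\colon X\to\sV$ is a functor).

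I do not expect any genuine obstacle: the ``hard part'' is purely bookkeeping, namely expanding $\swarrow$ and $\searrow$ correctly and matching the direction of the distributor axioms to the direction in which $\sy$ and $\syd$ embed. An alternative, slightly more abstract route would be to invoke the adjunction $\syd(a)=X(a,-)\dashv X(-,a)=\sy(a)$ in $[0,1]\text{-}\mathbf{Dist}$ together with Lemma~\ref{adjoint_arrow_calculation} to compute $\phi\swarrow\sy(a)=\phi\circ\syd(a)$ directly, but this still ultimately rests on the same distributor axiom, so the elementary computation above is the cleanest.
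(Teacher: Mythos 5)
Your proof is correct: the paper states this lemma without proof (as a special case of the standard enriched Yoneda lemma), and your computation — expanding $\CP X(\sy(a),\phi)$ as $\inf_{x}(X(x,a)\ra\phi(x))$, specializing to $x=a$ for one inequality and invoking the distributor (functoriality) axiom for the other — is exactly the standard argument the paper leaves implicit. The variance bookkeeping for $\CPd X$ is also handled correctly.
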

	
	A \emph{colimit} of a weight $\phi\colon X\oto\star $ is an element $\colim\phi$ of $X$ such that for all $x\in X$, \[X(\colim\phi,x)=\CP X(\phi,\sy(x)).\]
	It is clear that each weight has, up to isomorphism, at most one colimit. So, we shall speak of \emph{the} colimit of a weight. Since \[\CP X(\phi,\sy(x))=X(-,x)\swarrow\phi = (X\swarrow\phi)(x) \] for all $x\in X$,  then $\phi$ has a colimit if and only if the coweight $X\swarrow\phi$  is representable.  
	
	For each functor $f\colon K\to X$  and each weight $\phi$  of $K$, the colimit of the weight $\phi\circ f^*$, when exists, is called the \emph{colimit of $f$ weighted by $\phi$}. In particular,  $\colim\phi$ is  the colimit of the identity functor $X\to X$ weighted by $\phi$.  We  write $\colim_\phi f$ for the colimit of $f$ weighted by $\phi$.  
	
	\begin{example} \label{cpt as colimit} Let $\psi\colon X\to\sV$ be a functor, where $\sV=([0,1],\alpha_L)$. Then for each weight $\phi$   of   $X$,  the colimit of $\psi$ weighted by $\phi$ exists and is equal to the composite $\phi\circ\psi$ when $\psi$ is viewed as a distributor $\star\oto X$; that is, $\colim_\phi\psi =\phi\circ\psi$. 
	\end{example}

	A real-enriched category $X$ is   \emph{cocomplete} if every weight of $X$ has a colimit.  It is clear that a real-enriched category $X$ is cocomplete if and only if all weighted colimits of functors to $X$ exist; that means, for each functor $f\colon K\to X$ and each weight $\phi$ of $K$, the colimit of $f$ weighted by $\phi$ exists. 
	
	The dual notion of colimit is limit. Precisely, a  \emph{limit} of a coweight $\psi\colon \star\oto X$ is an element $\lim\psi$ of $X$ such that for all $x\in X$, \[\CPd  X(\syd(x),\psi)=X(x,\lim\psi).\] Each coweight has at most one limit up to isomorphism. Since \[\CPd  X(\syd(x),\psi)= \psi\searrow X(x,-) = (\psi\searrow X)(x),\]   $\psi$ has a limit if and only if the weight $\psi\searrow X$  is representable. 
	
	It is readily verified that for each coweight $\psi\colon \star\oto X$ of a real-enriched category $X$, an element  $a$ of $X$ is a limit of $\psi$ if and only if $a$ is a colimit of the weight $\psi^{\rm op}$ of $X^{\rm op}$.

	For each functor $f\colon K\to X$  and each coweight $\psi$  of $K$, the limit of the coweight $f_*\circ\psi$, when exists, is called the \emph{limit of $f$ coweighted by $\psi$}. In particular, $\lim\psi$ is the limit of the identity functor $X\to X$ coweighted by $\psi$.  We  write $\lim_\psi f$ for the limit of $f$ coweighted by $\psi$.  
	
	\begin{example} \label{sub as limit} Let $\xi\colon X\to\sV$ be a functor, where $\sV=([0,1],\alpha_L)$.  Then for each coweight $\psi$  of  $X$, the limit of $\xi$ coweighted by $\psi$ exists and is given by ${\lim}_\psi\xi=\sub_X(\psi,\xi).$ \end{example} 
	
	%To see this we   calculate:  for all $a\in[0,1]$, \begin{align*} \CPd \sV(\sV(a,-), \xi_\dag^\ra(\psi)) & = \CPd X( \xi_\dag^\la(\sV(a,-)),\psi)  & (\xi_\dag^\la\dashv\xi_\dag^\ra)\\   &= \inf_{x\in X}(\psi(x)\ra (a\ra \xi(x))) \\ &= \inf_{x\in X}(a\ra(\psi(x)\ra \xi(x))) \\ &= \sV(a, \sub_X(\psi,\xi)), \end{align*} hence $\sub_X(\psi,\xi)$ is the limit of $\xi$ coweighted by $\psi$. 

	A real-enriched category $X$ is \emph{complete} if every coweight of $X$ has a limit. It is clear that $X$ is complete if and only if for any functor $f\colon K\to X$ and any coweight $\psi$ of $K$, the limit of $f$ coweighted by $\psi$ exists.
	
	Suppose $f\colon X\to Y$ is a functor. We say that  $f$  preserves the colimit of a weight $\phi$ of $X$ if $f(\colim\phi)$ is a colimit of the weight $\phi\circ f^*$ whenever $\colim\phi$ exists. Likewise,  we say that $f$ preserves the limit of a coweight $\psi$ of $X$ if $f(\lim\psi)$ is a limit of the coweight $f_*\circ\psi$ whenever $\lim\psi$ exists. We say that  $f$    \emph{preserves colimits} if it preserves all colimits that exist. Likewise,  $f$  \emph{preserves limits} if it preserves all  limits that exist. 
	
	%We'll need the characterizations of completeness of  real-enriched categories  in terms of tensors, cotensors, and adjoint functors, as stated in Theorem \ref{characterizing completeness} below.
	
	Suppose $X,Y$ are real-enriched categories, $f\colon X\to Y$ and $g\colon Y\to X$ are functors.  We say that $f$ is left adjoint to $g$, or $g$ is right adjoint to $f$, and write $f\dashv g$, if $$Y(f(x),y)=X(x,g(y))$$ for all $x\in X$ and $y\in Y$. The pair $(f,g)$ is  called an (enriched) adjunction. This is a special case of  adjunction in the theory of enriched categories. 
	
	Here is a typical example of adjunctions. For each functor $f\colon X\to Y$ between real-enriched categories, the functor  $f^\rightarrow\colon\CP  X\to\CP Y$ sending a weight $ \phi$ of $X$ to $\phi\circ f^*$ is left adjoint to the  functor $f^\la\colon\CP  Y\to \CP X$ sending a weight $\psi$ of $Y$ to  $\psi\circ f_*$. 
	
	It is known that every left adjoint preserves colimits, every right adjoint preserves limits. Conversely,   if $f\colon X\to Y$ preserves colimits and $X$ is cocomplete, then $f$ is a left adjoint.  
	
	Suppose $X$ is a real-enriched category. For each $x\in X$ and $r\in [0,1]$, the \emph{tensor of $r$ with $x$}, denoted by $r\otimes  x$, is an element of $X$ such that for all $y\in X$, \[X(r\otimes  x,y)= r\ra X(x,y).\] The tensor $r\otimes  x$ exists if and only if the weight \[r\with\sy(x)\colon X\oto\star, \quad z\mapsto r\with X(z,x)   \] has a colimit, in which case  $r\otimes  x=\colim(r\with\sy(x))$. 
	
	For each $y\in X$ and  $r\in [0,1]$, the \emph{cotensor of $r$ with} $y$, denoted by $r\multimap y$,  is an element of $X$ such that for all $x\in X$, \[X(x,r\multimap y)=r\ra X(x,y).\] The cotensor $r\multimap y$ exists if and only if the coweight \[r\with\syd(y) \colon \star\oto X, \quad z\mapsto r\with X(y,z) \] has a limit, in which case  $r\multimap y=\lim(r\with\syd(y))$. 
	
	A real-enriched category $X$ is   \emph{tensored} if the tensor $r\otimes  x$ exists for all $x\in X$ and $r\in [0,1]$; $X$ is   \emph{cotensored} if the cotensor $r\multimap y$ exists for all $y\in X$ and $r\in [0,1]$. 
	It is clear that $X$ is tensored if and only if   $X^{\rm op}$  is cotensored.

	For each real-enriched category $X$, the categories  $\CP X$ and $\CPd X$ are both tensored and cotensored. In $\CP X$, the tensor and cotensor of $r\in [0,1]$ with $\phi\in\CP X$ are   $r\with\phi$ and $r\ra\phi$, respectively. In $\CPd X$, the tensor and cotensor of $r\in [0,1]$ with $\psi\in\CPd X$ are  $r\ra  \psi$ and $r\with\psi $, respectively.  
	
	\begin{theorem}\label{characterizing completeness} {\rm (\cite{Stubbe2005,Stubbe2006})} For each real-enriched category $X$, the following  are equivalent: \begin{enumerate}[label={\rm(\arabic*)}]  
			\item  $X$ is cocomplete. \item  $X$ is complete.  \item The Yoneda embedding $\sy\colon X\to\CP X$ has a left adjoint. \item The coYoneda embedding $\syd\colon X\to\CPd X$ has a right adjoint.
			\item $X$ is tensored, cotensored, and every subset of $X$ has a join in the ordered set $X_0$.
	\end{enumerate}\end{theorem}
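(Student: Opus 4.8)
My plan is to establish the cycle $(1)\Leftrightarrow(3)\Leftrightarrow(5)$ directly and then to read off $(2)$ and $(4)$ by duality, using only the facts recalled above: a weight $\phi$ has a colimit iff the coweight $X\swarrow\phi$ is representable, the Yoneda lemma, and the explicit descriptions of tensors and cotensors in $\CP X$ and $\CPd X$. For $(1)\Leftrightarrow(3)$: if $X$ is cocomplete, the assignment $\phi\mapsto\colim\phi$ is everywhere defined, and the defining identity $X(\colim\phi,x)=\CP X(\phi,\sy(x))$ is precisely the adjunction condition for $\colim(-)\dashv\sy$; that $\colim(-)$ is a functor then follows routinely from the Yoneda lemma. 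Conversely, if $L\dashv\sy$, then $X(L\phi,x)=\CP X(\phi,\sy(x))$ for every weight $\phi$, so $L\phi$ is a colimit of $\phi$, whence $X$ is cocomplete.

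For $(5)\Rightarrow(1)$, assume $X$ is tensored, cotensored, and $X_0$ is a complete lattice. The key auxiliary observation is that in a cotensored category order-theoretic joins are genuine conical colimits: if $j=\bigvee_a z_a$ in $X_0$ and $y\in X$, put $s=\inf_a X(z_a,y)$; then $z_a\sqsubseteq s\multimap y$ for every $a$ (since $s\le X(z_a,y)$), whence $j\sqsubseteq s\multimap y$, i.e. $s\le X(j,y)$, and as the reverse inequality is obvious we get $X(j,y)=\inf_a X(z_a,y)$. Now for any weight $\phi$ set $c=\bigvee_{x\in X}\bigl(\phi(x)\otimes x\bigr)$; combining this observation with $X(\phi(x)\otimes x,y)=\phi(x)\ra X(x,y)$ yields $X(c,y)=\inf_{x}\bigl(\phi(x)\ra X(x,y)\bigr)=\sub_X(\phi,\sy(y))=\CP X(\phi,\sy(y))$ for all $y$, so $c=\colim\phi$ and $X$ is cocomplete.

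It remains to prove $(1)\Rightarrow(5)$ and to close the loop. Tensors $r\otimes x=\colim(r\with\sy(x))$ and conical joins $\bigvee_a x_a=\colim(\sup_a\sy(x_a))$ are particular colimits, hence exist once $X$ is cocomplete; the only genuinely delicate point — and, I expect, the main obstacle — is that a cocomplete $X$ must also be cotensored, a cotensor being a limit rather than a colimit. Here I invoke $(1)\Leftrightarrow(3)$: a left adjoint to the fully faithful functor $\sy$ exhibits $X$ as a reflective full subcategory of $\CP X$, and $\CP X$ is tensored, cotensored and conically complete; since a reflective full subcategory is closed under all weighted limits that exist in the ambient category, $X$ inherits cotensors (indeed all conical limits), so $(5)$ holds. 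Finally, condition $(5)$ is self-dual — $X$ is tensored iff $X^{\rm op}$ is cotensored, $X$ is cotensored iff $X^{\rm op}$ is tensored, and $X_0$ is a complete lattice iff $(X^{\rm op})_0=(X_0)^{\rm op}$ is — while $X$ is complete iff $X^{\rm op}$ is cocomplete; hence $(2)$ holds iff $(5)$ holds for $X^{\rm op}$ iff $(5)$ holds for $X$ iff $(1)$ holds. And $(2)\Leftrightarrow(4)$ follows by applying the argument for $(1)\Leftrightarrow(3)$ to $X^{\rm op}$ together with the identifications $\CP(X^{\rm op})\cong(\CPd X)^{\rm op}$ and $\sy_{X^{\rm op}}\cong(\syd_X)^{\rm op}$, under which ``$\sy_{X^{\rm op}}$ has a left adjoint'' reads as ``$\syd_X$ has a right adjoint''. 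Thus the whole proof reduces to the closure-under-(weighted-)limits property of the reflective subcategory $\sy(X)\subseteq\CP X$; everything else is a direct computation or a formal duality.
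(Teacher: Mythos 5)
Your argument is correct, but note that the paper offers no proof of this theorem at all---it is imported from Stubbe's work with a citation---so there is nothing internal to compare against; what you have written is a self-contained proof of a quoted background result. Your decomposition ($(1)\Leftrightarrow(3)\Leftrightarrow(5)$ directly, then $(2)$ and $(4)$ by dualizing) is the standard one, and the individual steps check out: the computation $X(\bigvee_a z_a,y)=\inf_a X(z_a,y)$ from cotensors, the identification $X(c,y)=\sub_X(\phi,\sy(y))$ for $c=\bigvee_x(\phi(x)\otimes x)$, and the identifications $\CP(X^{\rm op})\cong(\CPd X)^{\rm op}$, $\sy_{X^{\rm op}}=\syd_X$ are all right. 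The one place where you lean on an unproved assertion is exactly the point you flag as the crux: that the reflective full subcategory $\sy(X)\subseteq\CP X$ is closed under cotensors. Since you identify this as what ``the whole proof reduces to,'' it deserves its one-line verification rather than an appeal to general enriched category theory; in the quantale-enriched setting it is immediate. With $L\dashv\sy$ and $\phi=r\ra\sy(y)$, put $z=L\phi$; the unit gives $\phi\leq\sy(z)$, while $X(z,y)=\CP X(\phi,\sy(y))\geq r$ (because $r\with(r\ra\sy(y))\leq\sy(y)$) gives $r\with\sy(z)\leq\sy(y)$, i.e.\ $\sy(z)\leq\phi$; hence $\sy(z)=r\ra\sy(y)$ and $X(x,z)=r\ra X(x,y)$, so $z$ is the cotensor $r\multimap y$. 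With that line inserted the proof is complete.
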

	
	From the above theorem it follows that if  $X,Y$ are cocomplete real-enriched categories, then a functor  $f\colon X\to Y$ is a left adjoint if and only if   $f\colon X_0\to Y_0$ preserves joins and $f$ preserves tensors in the sense that $f(r\otimes  x)=r\otimes  f(x)$ for all $r\in [0,1]$ and $x\in X$.

	\begin{corollary}\label{calculation of sup by tensors} Suppose $X$ is a complete real-enriched category. Then, for each functor  $f\colon K\to X$ and each weight $\phi$ of $ K$, the colimit of $f$ weighted by $\phi$  is given by  the join of  $\{\phi(z)\otimes  f(z)\mid z\in K\}$ in $X_0$. %Dually,  for each coweight $\psi$ of $K$, the limit of $f$ coweighted by $\psi$ is given by  the meet of  $\{\psi(z)\multimap f(z)\mid z\in K\}$ in $X_0$.  
	\end{corollary}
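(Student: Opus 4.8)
The plan is to read the formula off from the fact that, for a complete (equivalently cocomplete) $X$, the colimit functor $\colim\colon\CP X\to X$ is a left adjoint of the Yoneda embedding $\sy$ and therefore preserves joins and tensors, together with the observation that $\phi\circ f^*$ is a join of tensors of representable weights.

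First I would unwind the definitions. By definition $\colim_\phi f=\colim(\phi\circ f^*)$, and the composition formula for distributors gives, for each $w\in X$,
$$(\phi\circ f^*)(w)=\bv_{z\in K}\phi(z)\with X(w,f(z)).$$
Since the underlying order of $\CP X$ is the pointwise order --- so that a join in $(\CP X)_0$ is just the pointwise supremum --- and the tensor of $r\in[0,1]$ with $\mu\in\CP X$ is $r\with\mu$, this equation reads
$$\phi\circ f^*=\bv_{z\in K}\phi(z)\with\sy(f(z))=\bv_{z\in K}\big(\phi(z)\otimes\sy(f(z))\big)\quad\text{in }\CP X.$$

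Next, since $X$ is complete it is cocomplete by Theorem \ref{characterizing completeness}, so $\sy\colon X\to\CP X$ has a left adjoint, which by the defining property of colimits must be $\colim$; moreover $\CP X$, being tensored, cotensored and closed under arbitrary joins in its underlying order, is itself cocomplete. Hence, by the remark following Theorem \ref{characterizing completeness}, the left adjoint $\colim\colon\CP X\to X$ between cocomplete categories preserves joins and tensors. Applying this to the expression above, and using $\colim(\sy(a))\cong a$ for all $a\in X$ (immediate from the definition of $\colim$ and the Yoneda lemma), we obtain
$$\colim_\phi f=\bv_{z\in K}\phi(z)\otimes\colim(\sy(f(z)))=\bv_{z\in K}\phi(z)\otimes f(z),$$
as claimed, all joins and colimits being understood up to isomorphism as usual.

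The argument is essentially bookkeeping, and I do not expect a genuine obstacle; the only steps that want a little care are the pointwise identification of $\phi\circ f^*$ with $\bv_{z}\phi(z)\with\sy(f(z))$ and the up-to-isomorphism caveats. A direct alternative, avoiding the preservation remark, is to verify that $a:=\bv_{z\in K}\phi(z)\otimes f(z)$ satisfies $X(a,x)=\CP X(\phi\circ f^*,\sy(x))$ for every $x\in X$: expanding the right-hand side as $\inf_{w\in X}\big((\bv_{z}\phi(z)\with X(w,f(z)))\ra X(w,x)\big)$ and, using cotensoredness of $X$, the left-hand side as $\inf_{z\in K}\big(\phi(z)\ra X(f(z),x)\big)$, one inequality follows by specializing $w$ to the elements $f(z)$ and the other from distributivity of $\with$ over suprema together with the triangle inequality of $X$.
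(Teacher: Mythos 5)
Your proposal is correct and follows essentially the same route as the paper: decompose $\phi\circ f^*$ as the join in $(\CP X)_0$ of the tensors $\phi(z)\with\sy(f(z))$, then push this through the colimit-preserving left adjoint $\colim\colon\CP X\to X$. The only cosmetic difference is that the paper collapses your last two steps by invoking the identity $r\otimes x=\colim(r\with\sy(x))$ directly, rather than separating "preserves tensors" from "$\colim\sy(a)\cong a$".
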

	
	\begin{proof}  Since $\phi\circ f^*$ is the join  in   $(\CP X)_0$ of the subset $$\{\phi(z)\with X(-,f(z))\mid z\in K\},$$    the functor $\colim\colon\CP X\to X$  preserves colimits, it follows that \begin{align*}{\colim}_\phi f&= \colim  \phi\circ f^* \\ & = \sup_{z\in K}\colim(\phi(z)\with X(-,f(z)))\\ & = \sup_{z\in K} \phi(z)\otimes  f(z) . \qedhere \end{align*} 
	\end{proof} 
	
	\begin{definition}Suppose $\phi\colon X\oto\star$ is a weight of a real-enriched category  $X$. We say that $\phi$ is Cauchy if it is a right adjoint,  as a distributor. That means, there is a distributor $\psi\colon\star\oto X$ such that $\phi\circ\psi\geq1$ and $\psi(y)\with\phi(x)\leq X(x,y)$ for all $x,y\in X$. \end{definition}
	
	Suppose $f\colon X\to Y$ is a functor and $\phi $ is a Cauchy weight   of $X$.  Then, $\phi\circ f^*$, being a composite of right adjoints,  is a right adjoint, hence a Cauchy weight of $Y$. 
	
	If a Cauchy weight $\phi$  of a real-enriched category $X$ has a colimit, then $\phi$ is representable and its colimit is preserved by any  functor. To see this, suppose $a$ is a colimit of a Cauchy weight $\phi\colon X\oto*$. Then   $X\swarrow\phi=X(a,-)$. Since $X\swarrow\phi$ is   left adjoint to  $\phi$ by Lemma \ref{adjoint_arrow_calculation}, it follows that $\phi=X(a,-)\searrow X =X(-,a)$, showing that $\phi$ is representable. The second conclusion follows from  that for any functor $f\colon X\to Y$ and any weight $\phi$ of $X$, the weight $\phi\circ f^*$ is representable whenever so is $\phi$.  
	
	The following proposition relates colimits of Cauchy weights to bilimits of Cauchy nets.
	\begin{proposition}\label{colimit =bilimit for Cauchy net} For each real-enriched category $X$, the following are equivalent:
		\begin{enumerate}[label={\rm(\arabic*)}]   \item Every Cauchy weight of $X$ has a colimit. \item Every Cauchy net of $X$ has a bilimit. \item  Every Cauchy sequence of $X$ has a bilimit.  \end{enumerate} \end{proposition}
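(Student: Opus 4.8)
The plan is to prove the cycle $(1)\Rightarrow(2)\Rightarrow(3)\Rightarrow(1)$; the implication $(2)\Rightarrow(3)$ is immediate, since a sequence is a net indexed by $\mathbb{N}$. The engine of the proof is a translation between Cauchy nets and Cauchy weights. To a net $\{x_i\}_{i\in D}$ of $X$ I attach the weight $\phi\colon X\oto\star$ and the coweight $\psi\colon\star\oto X$ given by
\[\phi(x)=\sup_{i\in D}\inf_{j\geq i}X(x,x_j),\qquad \psi(x)=\sup_{i\in D}\inf_{j\geq i}X(x_j,x).\]
That $\phi$ is a weight and $\psi$ a coweight is a routine check using that $-\with r$ preserves suprema and is order preserving. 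The key claim is that if $\{x_i\}_{i\in D}$ is Cauchy then $\psi$ is left adjoint to $\phi$: the inequality $\psi\circ\phi\leq X$, which unwinds to $\psi(y)\with\phi(x)\leq X(x,y)$, follows by choosing for given indices $i,k$ a common upper bound $m$ in $D$ and applying $X(x_m,y)\with X(x,x_m)\leq X(x,y)$; and $\phi\circ\psi\geq 1$, which unwinds to $\sup_{x}\phi(x)\with\psi(x)=1$, follows because the Cauchy condition makes $\phi(x_m)$ and $\psi(x_m)$ both exceed any prescribed $r<1$ for all sufficiently late $m$, so that $\sup_x\phi(x)\with\psi(x)\geq\sup_{r<1}(r\with r)=1$ by continuity of $\with$ at $(1,1)$. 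Hence $\phi$ is a Cauchy weight.

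For $(1)\Rightarrow(2)$, start from a Cauchy net and form the Cauchy weight $\phi$ as above. By hypothesis $\phi$ has a colimit $a=\colim\phi$, and a Cauchy weight that has a colimit is representable (as recalled just before the proposition), so $\phi=X(-,a)$. Since left adjoints in $[0,1]\text{-}\mathbf{Dist}$ are unique and $X(a,-)\dashv X(-,a)$, we get $\psi=X(a,-)$, i.e. $\sup_{i}\inf_{j\geq i}X(x_j,x)=X(a,x)$ for every $x$; thus $a$ is a Yoneda limit of $\{x_i\}_{i\in D}$, hence a bilimit by Lemma \ref{bilimit of Cauchy net}.

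For $(3)\Rightarrow(1)$, let $\phi$ be a Cauchy weight with left adjoint $\psi$. From $\sup_x\phi(x)\with\psi(x)=1$ and $a\with b\leq\min\{a,b\}$, I choose for each $n\geq 1$ an element $x_n$ with $\phi(x_n)>1-1/n$ and $\psi(x_n)>1-1/n$. The adjunction inequality $\psi(x_k)\with\phi(x_m)\leq X(x_m,x_k)$ then gives $\inf_{m,k\geq n}X(x_m,x_k)\geq (1-1/n)\with(1-1/n)$, so $\{x_n\}_{n\geq 1}$ is a Cauchy sequence; by hypothesis it has a bilimit $a$. To finish I show $\phi=X(-,a)$: using that $\phi$ is a weight gives $\phi(x)\geq\phi(x_m)\with X(x,x_m)\geq (1-1/m)\with X(x,x_m)$, which on letting $m$ run and invoking that $-\with r$ preserves suprema yields $\phi(x)\geq\sup_n\inf_{m\geq n}X(x,x_m)=X(x,a)$; and the adjunction inequality $\psi(x_m)\with\phi(x)\leq X(x,x_m)$ gives $(1-1/m)\with\phi(x)\leq X(x,x_m)$ for all $m$, whence $\phi(x)\leq X(x,a)$ by the same limiting argument. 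Thus $\phi$ is representable and $\colim\phi=a$, so (1) holds.

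I expect the main obstacle to be bookkeeping rather than conceptual: keeping the composition and variance conventions for distributors consistent, and justifying the two limiting passages — ``$\sup_x\phi(x)\with\psi(x)\geq r\with r$ for every $r<1$, hence $=1$'' and ``$(1-1/m)\with\phi(x)\leq X(x,x_m)$ for every $m$, hence $\phi(x)\leq X(x,a)$'' — each of which relies on continuity of the t-norm at $(1,1)$ and on $-\with r$ preserving suprema. Everything else is a short computation in $[0,1]\text{-}\mathbf{Dist}$ together with an appeal to Lemma \ref{bilimit of Cauchy net}.
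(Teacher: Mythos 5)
Your proof is correct and takes essentially the same approach as the paper's: the same cycle $(1)\Rightarrow(2)\Rightarrow(3)\Rightarrow(1)$, the same weight/coweight pair attached to a Cauchy net, and the same Lawvere-style selection of a Cauchy sequence for $(3)\Rightarrow(1)$. The only cosmetic differences are that you verify directly that the weight generated by a Cauchy net is Cauchy (the paper defers this to Lemma \ref{Cauchy net implies Cauchy weight}) and that in $(1)\Rightarrow(2)$ you route through representability and uniqueness of left adjoints plus Lemma \ref{bilimit of Cauchy net}, where the paper reads the two bilimit equations off directly.
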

	
	\begin{proof}$(1)\Rightarrow(2)$ Suppose   $\{x_i\}_{i\in D}$ is a Cauchy net of $X$. Then the weight \[\phi =\sup_{i\in D}\inf_{j\geq i}X(-,x_j)\] is Cauchy with a left adjoint given by the coweight  \[ \psi=\sup_{i\in D}\inf_{j\geq i}X(x_j,-). \]   Actually, a slightly more general conclusion will be proved in Lemma \ref{Cauchy net implies Cauchy weight}. Let $a$ be a colimit of the Cauchy weight $\phi$. Then $\phi(x)=X(x,a)$ and $\psi(x)=X(a,x)$  for all $x\in X$, which implies that $a$ is a bilimit of $\{x_i\}_{i\in D}$.  
		
		$(2)\Rightarrow(3)$ Trivial.  
		
		$(3)\Rightarrow(1)$ The proof is essentially the same  as that in \cite{Lawvere1973} for metric spaces, we include it here for convenience of the reader.
		Let $\phi$ be a Cauchy weight of $X$, with a left adjoint  $\psi$. Then $\sup_{x\in X} \phi(x)\with \psi(x) \geq 1$ and $\psi(y)\with\phi(x)\leq X(x,y)$ for all $x,y\in X$. 
		
		For each $n\geq1$, pick some $x_n$ such that \[\phi(x_n)\with \psi(x_n) \geq  1-1/n.\] Since for all $n,m\geq1$ we have \[X(x_n,x_m)\geq \psi(x_m)\with\phi(x_n)\geq (1-1/m)\with (1-1/n),\]  then $\{x_n\}_{n\geq1}$  is a Cauchy sequence, hence has a bilimit, say $a$. This means that  for all $x\in X$,  \[\sup_{n\geq1}\inf_{m\geq n}X(x,x_m)=X(x,a), \quad  \sup_{n\geq1}\inf_{m\geq n}X(x_m,x)=X(a,x).  \]
		
		We claim that $a$ is a colimit of $\phi$. To this end it suffices to show that \[\phi =\sup_{n\geq1}\inf_{m\geq n}X(-,x_m).\]  For all $x\in X$ and  $k\geq1$, since \[(1-1/k)\with \sup_{n\geq1}\inf_{m\geq n}X(x,x_m)\leq   \sup_{n\geq k}\inf_{m\geq n}\phi(x_m)\with X(x,x_m)\leq\phi(x),\] it follows that $$\sup_{n\geq1}\inf_{m\geq n}X(x,x_m)\leq\phi(x)$$ by arbitrariness of $k$.     On the other hand, since $$X(x,x_m)\geq \psi(x_m)\with\phi(x)\geq (1-1/m)\with\phi(x)$$  for all $m\geq1$, then \begin{align*} \sup_{n\geq1}\inf_{m\geq n}X(x,x_m)&\geq  \sup_{n\geq1}\inf_{m\geq n}(1-1/m)\with\phi(x)\geq \phi(x). \qedhere \end{align*} \end{proof}

	\begin{definition}(\cite{Lawvere1973}) A real-enriched category $X$ is  Cauchy complete  if  it is separated and every Cauchy weight of $X$ is representable. \end{definition}
	
	For each real-enriched category $X$, the real-enriched category $\CC X$ composed of Cauchy weights of $X$ is Cauchy complete, it is indeed the free Cauchy completion of $X$, see   \cite[Section 7]{Stubbe2005}. The following conclusion was first observed by Lawvere \cite{Lawvere1973} for metric spaces.
	
	\begin{proposition} \label{complete metric}   A  real-enriched category is Cauchy complete if and only if  each of its Cauchy sequences converges uniquely in the open ball topology of its symmetrization. \end{proposition}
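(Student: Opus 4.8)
The plan is to reduce the statement to the equivalences already established. By Lemma~\ref{bilimit of Cauchy net}, for a Cauchy sequence $\{x_n\}_{n\geq1}$ of $X$ and an element $a\in X$ the following are the same: $a$ is a bilimit of $\{x_n\}_{n\geq1}$; and $\{x_n\}_{n\geq1}$ converges to $a$ in the open ball topology of the symmetrization. Moreover, by the remark following that lemma, any limit in the symmetrization of a \emph{sequence} is automatically a bilimit, since convergence there already forces the sequence to be Cauchy (Proposition~\ref{limit in open ball top}). Since a net has at most one bilimit up to isomorphism, this is the bridge between the topological condition in the statement and the categorical machinery.

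For the forward implication, suppose $X$ is Cauchy complete, so $X$ is separated and every Cauchy weight is representable, hence has a colimit. By $(1)\Rightarrow(3)$ of Proposition~\ref{colimit =bilimit for Cauchy net}, every Cauchy sequence of $X$ has a bilimit $a$, so by the bridge it converges to $a$ in the open ball topology of the symmetrization. If it also converges to $b$ there, then $b$ too is a bilimit, whence $a\cong b$; separatedness of $X$ then gives $a=b$, so the limit is unique.

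For the converse, assume every Cauchy sequence of $X$ converges uniquely in the open ball topology of its symmetrization. First, $X$ is separated: if $X(x,y)=X(y,x)=1$, then the constant sequence at $x$ is Cauchy and, because $S(\alpha)(y,x)=1$, converges both to $x$ and to $y$ in the symmetrization, whence $x=y$. Second, every Cauchy sequence, having a limit in the symmetrization, has a bilimit by the bridge; so by $(3)\Rightarrow(1)$ of Proposition~\ref{colimit =bilimit for Cauchy net} every Cauchy weight of $X$ has a colimit, and a Cauchy weight that has a colimit is representable (as recorded just before Proposition~\ref{colimit =bilimit for Cauchy net}). Thus $X$ is separated with every Cauchy weight representable, i.e.\ Cauchy complete.

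The only delicate point — the main, if mild, obstacle — is the observation that a topological limit in the symmetrization of a Cauchy sequence is automatically a genuine bilimit; this is what allows uniqueness-up-to-isomorphism of bilimits to be promoted, via separatedness, to honest uniqueness of limits. Everything else is assembling results already in hand.
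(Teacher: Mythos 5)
Your proof is correct, and it is the natural argument: the paper itself gives no proof of this proposition (it only attributes the result to Lawvere), and your assembly of Lemma~\ref{bilimit of Cauchy net} (the equivalence of bilimits with convergence in the symmetrization, for Cauchy nets) with Proposition~\ref{colimit =bilimit for Cauchy net} and the observation that a Cauchy weight with a colimit is representable is exactly the intended route. The two points you flag as delicate --- that a limit in the symmetrization of a Cauchy sequence is a bilimit, and that separatedness upgrades uniqueness up to isomorphism to honest uniqueness (with the converse direction of separatedness extracted from the constant sequence) --- are handled correctly.
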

	
	The following characterization of Cauchy weights partly motivates the notion of ideals of real-enriched categories in Definition \ref{defn of ideals} below. 
	
	\begin{proposition}\label{Cauchy weight via sub} A weight $\phi$ of a real-enriched category $X$ is Cauchy if and only if the functor $\sub_X(\phi,-)\colon \CP X \to\sV$ preserves all (enriched) colimits, where $\sV=([0,1],\alpha_L)$. \end{proposition}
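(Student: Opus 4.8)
The plan is to identify the functor $\sub_X(\phi,-)\colon\CP X\to\sV$ with a composition functor, and then to read everything off from the calculus of distributors and the pointwise description of colimits in presheaf categories. I would set $\psi\coloneqq X\swarrow\phi$, a coweight of $X$, i.e.\ a distributor $\psi\colon\star\oto X$; by the identity $\CP X(\phi,\sy(x))=(X\swarrow\phi)(x)$ recalled just before Corollary~\ref{calculation of sup by tensors}, it is given by $\psi(x)=\sub_X(\phi,X(-,x))$. I would then record two facts about $\psi$ that do not use the hypothesis. First, since $-\swarrow\phi$ is right adjoint to $-\circ\phi$, evaluating this adjunction at the identity distributor on $X$ shows that $\psi$ is the largest distributor $\star\oto X$ with $\psi\circ\phi\leq X$; in particular $\psi\circ\phi\leq X$, that is, $\psi(y)\with\phi(x)\leq X(x,y)$ for all $x,y\in X$. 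Hence $\psi\dashv\phi$ as distributors holds if and only if the remaining inequality $\phi\circ\psi\geq 1$, equivalently $\sup_{x\in X}\phi(x)\with\psi(x)=1$, holds; so the statement that $\phi$ is Cauchy reduces to $\phi\circ\psi\geq 1$.

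For the forward implication, from $\phi$ Cauchy to $\sub_X(\phi,-)$ colimit-preserving, I would use that $\phi$, being Cauchy, has a left adjoint $\psi'$ as a distributor; then Lemma~\ref{adjoint_arrow_calculation}(i) gives $\sub_X(\phi,\mu)=\CP X(\phi,\mu)=\mu\swarrow\phi=\mu\circ\psi'$ for every weight $\mu$ of $X$, so $\sub_X(\phi,-)$ is the composition functor $(-)\circ\psi'\colon\CP X\to\CP\star=\sV$. Since $\CP X$ and $\sV$ are cocomplete and left adjoints preserve colimits, by the remark following Theorem~\ref{characterizing completeness} it suffices to check that $(-)\circ\psi'$ preserves joins and tensors. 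Joins in $\CP X$ and in $\sV$ are pointwise suprema, and composition in the quantaloid $[0,1]\text{-}\mathbf{Dist}$ preserves suprema in each variable, so joins are preserved; and tensors are preserved because the tensor of $r\in[0,1]$ with $\mu$ is $r\with\mu$ in $\CP X$ and $r\with s$ in $\sV$, while $(r\with\mu)\circ\psi'=r\with(\mu\circ\psi')$ since $r\with(-)$ commutes with suprema.

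For the converse I would start from the presentation of an arbitrary weight as a colimit of representables: applying Corollary~\ref{calculation of sup by tensors} to the Yoneda embedding $\sy\colon X\to\CP X$, together with the identity $\sup_{x\in X}\mu(x)\with X(-,x)=\mu$, gives $\mu=\colim_{\mu}\sy$. Applying the colimit-preserving functor $\sub_X(\phi,-)$ to this and computing the resulting colimit in $\sV$ by Example~\ref{cpt as colimit}, I would obtain
\[
\sub_X(\phi,\mu)=\colim_{\mu}\bigl(\sub_X(\phi,-)\circ\sy\bigr)=\mu\circ\psi ,
\]
where $\psi$ is the coweight $x\mapsto\sub_X(\phi,\sy(x))=\sub_X(\phi,X(-,x))$, i.e.\ exactly the $\psi=X\swarrow\phi$ of the first paragraph. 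Specialising this to $\mu=\phi$ and using $\sub_X(\phi,\phi)=\CP X(\phi,\phi)=1$ yields $\phi\circ\psi=\sup_{x\in X}\phi(x)\with\psi(x)=1$; combined with $\psi\circ\phi\leq X$ this gives $\psi\dashv\phi$, so $\phi$ is Cauchy.

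The step I expect to be the real obstacle is the middle one of the converse: producing a left adjoint of $\phi$ out of the bare colimit-preservation hypothesis. The trick that makes it go through is to test colimit-preservation against the canonical presentation $\mu=\colim_\mu\sy$, which forces $\sub_X(\phi,-)$ to be computed by the single coweight $\psi(x)=\sub_X(\phi,\sy(x))$; once this is known, it only remains to notice that substituting $\mu=\phi$ turns the tautology $\sub_X(\phi,\phi)=1$ into precisely the adjunction inequality $\phi\circ\psi\geq1$ that is missing, its partner $\psi\circ\phi\leq X$ being automatic because $\psi=X\swarrow\phi$. Everything else is routine bookkeeping.
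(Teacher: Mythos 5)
Your proposal is correct and follows essentially the same route as the paper: necessity via $\sub_X(\phi,-)=(-)\circ\psi'$ being composition with the left adjoint, and sufficiency by testing colimit-preservation against the canonical presentation $\phi=\sup_{x\in X}\phi(x)\otimes\sy(x)$ to extract $\phi\circ(X\swarrow\phi)\geq 1$, the other adjunction inequality being automatic. The only (harmless) differences are that you first derive the general identity $\sub_X(\phi,\mu)=\mu\circ\psi$ before specialising to $\mu=\phi$, and that you spell out the join/tensor verification that the paper leaves implicit.
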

	
	\begin{proof} If $\psi$ is a left adjoint of $\phi$, then for all $\xi\in\CP X$, $$\sub_X(\phi,\xi)=\xi\swarrow\phi=\xi\circ\psi,$$ from which one deduces that $\sub_X(\phi,-)$ preserves colimits. This proves the necessity. For sufficiency, ssume that the functor  $\sub_X(\phi,-)$ preserves colimits. We wish to show that $\psi\coloneqq X\swarrow\phi$ (or, $\psi(x)= \sub_X(\phi,\sy(x))$ pointwise)  is left adjoint to $\phi$, hence $\phi$ is Cauchy. It suffices to check that $\phi\circ\psi\geq1$. 
	 Since $\sub_X(\phi,-)$ preserves colimits and  $\phi=\sup_{x\in X}(\phi(x)\otimes\sy(x))$,   then \begin{align*}1&=\sub_X(\phi,\phi)=\sup_{x\in X} (\phi(x)\with \sub_X(\phi,  \sy(x)) =\sup_{x\in X}\phi(x)\with\psi(x)=\phi\circ\psi. \qedhere \end{align*}    \end{proof}

	A functor $f\colon X\to Y$  \emph{preserves finite (enriched) colimits} if for any finite real-enriched category $K$ (that means, $K$ has a finite number of objects), any functor $h\colon K\to X$, and any weight $\phi$ of $K$, $f(\colim_\phi h)$ is a colimit of $f\circ h$ weighted by $\phi$ whenever $\colim_\phi h$ exists.

	\begin{definition} \label{defn of ideals} Let $X$ be a real-enriched category. An ideal of $X$ is a weight  $\phi$ of $X$ such that the functor $\sub_X(\phi,-)\colon  \CP X \to \sV$ preserves   finite  colimits, where $\sV=([0,1],\alpha_L)$.  \end{definition}
	
	The term \emph{ideal} is chosen because of the fact that an ideal  of an ordered set $P$ (i.e., a directed lower set of $P$) is exactly a non-empty join-irreducible element in the set of lower sets of $P$ ordered by inclusion.
	
	Every Cauchy weight is an ideal,  every representable weight is an ideal in particular. Since $\CP X$ is complete, it follows from Corollary \ref{calculation of sup by tensors} that  a weight $\phi$  is an ideal if and only if   \begin{enumerate}[label=\rm(I\arabic*)]  
		\item  $\sub_X(\phi,-)\colon  \CP X \to \sV$ preserves tensors; that is, $$\sub_X(\phi,r\with\lam)=r\with\sub_X(\phi,\lam)$$ for all  $\lam\in\CP X$ and all $r\in[0,1]$. \item   $\sub_X(\phi,-)\colon (\CP X)_0\to[0,1]$ preserves finite joins; that is, $$\sub_X(\phi,\lam\vee\mu)=\sub_X(\phi,\lam)\vee\sub_X(\phi,\mu)$$  for all  $\lam,\mu\in\CP X$. \end{enumerate}    
	
	Suppose   $\phi$ is an ideal of $X$. From (I1) it follows that for all $p\in[0,1]$, $$\sub_X(\phi,p_X)=p\with \sub_X(\phi,1_X)=p,$$  which implies  $\phi$ is  \emph{inhabited} in the sense that $\sup_{x\in X}\phi(x)=1$.  
	
	\begin{remark} The notion of ideals of ordered sets has been extended to the quantale-valued context in different ways: weights generated by forward Cauchy nets \cite{AW2011,FS2002,FSW}, flat ideals \cite{Vickers2005,LZZ2020},  irreducible ideals \cite{LZZ2020}, and etc. A comparison of these extension can be found in \cite{LZZ2020}. By   Proposition \ref{characterization of  ideal} below and Theorem 3.13 in \cite{LZZ2020} one sees that for a continuous t-norm,  ideals in the sense of  Definition \ref{defn of ideals} coincide with  irreducible ideals postulated in  \cite[Definition 3.4]{LZZ2020}. \end{remark}
	
	For each real-enriched category $X$, let $$\CI X  $$ be the subcategory of $\CP X$ composed of ideals of $X$. For each functor $f\colon X\to Y$, by the equality  $\sub_Y(f^\ra(\phi),\mu) =\sub_X(\phi,f^\la(\mu))$ for every weight $\phi$ of $X$ and every weight $\mu$ of $Y$, it is readily verified that if $\phi$ is an ideal of $X$, then   $f^\ra(\phi)$ is an ideal of $Y$. Thus, the assignment $X\mapsto\CI X$ defines a  functor $$\CI\colon[0,1]\text{-}\mathbf{Cat}\to[0,1]\text{-}\mathbf{Cat},$$ which is a subfunctor of $\CP\colon[0,1]\text{-}\mathbf{Cat}\to[0,1]\text{-}\mathbf{Cat}$.

	We'll present in Proposition \ref{Yoneda com via ideals}  a characterization for colimits of ideals via Yoneda limits of forward Cauchy nets, parallel to that in Proposition \ref{colimit =bilimit for Cauchy net} for colimits of Cauchy weights (via bilimits of Cauchy nets). For this we need two results  concerning Yoneda limits of forward Cauchy nets. The first deals with Yoneda limits  in the category $\CP X$; the second says that a Yoneda limit of a forward Cauchy net is precisely a colimit of the weight generated by the net. 
	
	\begin{lemma}{\rm (a special case of \cite[Theorem 3.1]{Wagner97})} \label{Yoneda limits in PX} Let $X$ be a real-enriched category. Then, for each forward Cauchy net $\{\phi_i\}_{i\in D}$ of $\CP X$, the weight $\sup_{i\in D}\inf_{j\geq i}\phi_j$ is a Yoneda limit of $\{\phi_i\}_{i\in D}$; that is,  \[\CP X\Big(\sup_{i\in D}\inf_{j\geq i}\phi_j,\phi\Big) =\sup_{i\in D}\inf_{j\geq i}\CP X(\phi_j,\phi) \] for all  $\phi\in\CP X$. \end{lemma}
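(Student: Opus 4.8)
The plan is to unfold the definition of the hom-objects in $\CP X$ and verify the displayed identity through two opposite inequalities. Writing $\Phi\coloneqq\sup_{i\in D}\inf_{j\geq i}\phi_j$ for brevity and recalling that $\CP X(\psi_1,\psi_2)=\sub_X(\psi_1,\psi_2)=\inf_{x\in X}\big(\psi_1(x)\ra\psi_2(x)\big)$, the goal becomes: for an arbitrary $\phi\in\CP X$,
\[
\sub_X(\Phi,\phi)=\sup_{i\in D}\inf_{j\geq i}\sub_X(\phi_j,\phi).
\]
The only machinery I would use is the adjunction $a\with b\leq c\iff a\leq b\ra c$ and the quantale law that $\with$ preserves arbitrary suprema in each variable; in particular $\sub_X(\Phi,\phi)$ is the largest $r\in[0,1]$ with $r\with\Phi(x)\leq\phi(x)$ for all $x$.

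For the inequality $\geq$, which I expect to require nothing about the net, I would fix $i\in D$, set $r\coloneqq\inf_{j\geq i}\sub_X(\phi_j,\phi)$ so that $r\with\phi_j(x)\leq\phi(x)$ for all $j\geq i$ and all $x$, then expand $\Phi(x)=\sup_{i'}\inf_{k\geq i'}\phi_k(x)$, push $\with$ through the supremum, and bound each term: given $i'$, choose $i''\geq i',i$ by directedness and note $\inf_{k\geq i'}\big(r\with\phi_k(x)\big)\leq r\with\phi_{i''}(x)\leq\phi(x)$. This yields $r\with\Phi(x)\leq\phi(x)$ for all $x$, hence $r\leq\sub_X(\Phi,\phi)$; taking the supremum over $i\in D$ finishes this half.

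For the inequality $\leq$, I would invoke forward Cauchyness of $\{\phi_i\}_{i\in D}$ in $\CP X$, i.e. $\sup_{i}\inf_{k\geq j\geq i}\sub_X(\phi_j,\phi_k)=1$. Setting $L\coloneqq\sub_X(\Phi,\phi)$, so $L\with\Phi(x)\leq\phi(x)$ for all $x$, and fixing $s<1$, forward Cauchyness supplies some $i_0$ with $\sub_X(\phi_j,\phi_k)\geq s$, equivalently $s\with\phi_j(x)\leq\phi_k(x)$, for all $k\geq j\geq i_0$ and all $x$. Fixing $j\geq i_0$ and taking the infimum over $k\geq j$ gives $s\with\phi_j(x)\leq\inf_{k\geq j}\phi_k(x)\leq\Phi(x)$, hence $(s\with L)\with\phi_j(x)\leq L\with\Phi(x)\leq\phi(x)$, i.e. $s\with L\leq\sub_X(\phi_j,\phi)$ for every $j\geq i_0$. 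Thus $s\with L\leq\sup_i\inf_{j\geq i}\sub_X(\phi_j,\phi)$, and letting $s$ range over $[0,1)$ together with $\sup_{s<1}(s\with L)=\big(\sup_{s<1}s\big)\with L=L$ closes the argument.

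I do not anticipate a genuine obstacle; the proof is entirely formal once the adjunction and join-preservation of $\with$ are at hand. The two places needing slight care are the directedness manipulation that bounds $\sup_{i'}\inf_{k\geq i'}\big(r\with\phi_k(x)\big)$ by $\phi(x)$, and the final step $\sup_{s<1}(s\with L)=L$, which is again just preservation of joins by $\with$ and needs no continuity of the t-norm. It is worth flagging in the write-up that the $\geq$ direction holds for every net, whereas the $\leq$ direction genuinely uses the forward Cauchy hypothesis — this is exactly why $\Phi$ is a \emph{Yoneda} limit and not, in general, a bilimit of the net.
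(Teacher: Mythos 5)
Your argument is correct and complete. Note that the paper does not prove this lemma at all --- it is quoted as a special case of Wagner's Theorem 3.1 --- so there is no in-paper proof to compare against; your direct two-inequality computation is the standard argument and serves as a valid self-contained substitute. Both halves check out: the ``$\geq$'' half uses only directedness of $D$ and join-preservation of $\with$, the ``$\leq$'' half correctly extracts $s\with\phi_j\leq\inf_{k\geq j}\phi_k\leq\Phi$ from forward Cauchyness, and the closing step $\sup_{s<1}(s\with L)=L$ indeed needs only that $\with$ preserves suprema, not continuity of the t-norm. Your flag that only the ``$\leq$'' direction uses the forward Cauchy hypothesis is also accurate and worth keeping.
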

	
	\begin{lemma} {\rm (\cite[Lemma 46]{FSW})}	\label{yoneda limit as colimits} Let   $\{x_i\}_{i\in D}$ be a forward Cauchy net of a real-enriched category $X$.  Then, an element of $X$ is a Yoneda limit of $\{x_i\}_{i\in D}$ if and only if it is a colimit of the  weight $\sup_{i\in D}\inf_{j\geq i}X(-,x_j)$. \end{lemma}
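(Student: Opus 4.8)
The plan is to prove the two directions of the equivalence separately, in both cases passing between the net $\{x_i\}_{i\in D}$ and the weight $\phi \coloneqq \sup_{i\in D}\inf_{j\geq i}X(-,x_j)$ it generates, so that the statement reduces to a comparison between Yoneda limits of the net and colimits of $\phi$. The key observation is that, unwinding the definition of colimit, an element $a$ is a colimit of $\phi$ exactly when $X(\colim\phi,x) = \CP X(\phi,\sy(x))$ for all $x$, i.e. when $\CP X(\phi,\sy(x)) = X(a,x)$; and by the calculation already displayed in the excerpt, $\CP X(\phi,\sy(x)) = (X\swarrow\phi)(x)$, while directly from the formula for $\phi$ one computes $(X\swarrow\phi)(x) = \phi \searrow X(-,x) = \bigwedge_{y}\phi(y)\to X(y,x)$. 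The task is therefore to show this latter expression equals $\sup_{i\in D}\inf_{j\geq i}X(x_j,x)$, which is precisely the condition for $a$ being a Yoneda limit of the net.

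For the forward direction, suppose $b$ is a Yoneda limit of $\{x_i\}_{i\in D}$, so $\sup_{i\in D}\inf_{j\geq i}X(x_j,x) = X(b,x)$ for all $x$. I would show $b$ is a colimit of $\phi$ by verifying $\CP X(\phi,\sy(x)) = X(b,x)$. One inequality, $X(b,x)\leq \CP X(\phi,\sy(x))$, should follow since $\CP X(\phi,\sy(x)) = \phi\searrow X(-,x)$ and for each $y$ one has $\phi(y)\with X(b,x) \leq \bigl(\inf_{j\geq i}X(x_j,y)\bigr)\with X(b,x)$ for suitable $i$, and $X(b,x)\leq X(x_j,x)/\cdots$ — more cleanly, since $\phi(y) \leq X(y,\text{(tail)})$ feeds into $X(b,-)$-estimates via the Yoneda-limit identity applied at the point $y$. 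The reverse inequality $\CP X(\phi,\sy(x))\leq X(b,x)$ comes from testing the infimum defining $\phi\searrow X(-,x)$ against the cofinal family of tails: for each $i$, taking $y$ ranging so that $\phi(y)$ is close to $\inf_{j\geq i}X(x_j,y)$, monotonicity of $\with$ and continuity of the t-norm let one push $\inf_{k\geq j}X(x_k,y)\with X(y,x)\leq X(x_k,x)$, hence $\CP X(\phi,\sy(x))\leq\sup_i\inf_k X(x_k,x) = X(b,x)$. The cleanest route here is probably to invoke that $\phi$ is the Yoneda limit of $\{X(-,x_i)\}$ in $\CP X$ (Lemma \ref{Yoneda limits in PX}) and that $\sy$ is fully faithful, so that $\CP X(\phi,\sy(x)) = \CP X\bigl(\sup_i\inf_j \sy(x_j),\sy(x)\bigr) = \sup_i\inf_j \CP X(\sy(x_j),\sy(x)) = \sup_i\inf_j X(x_j,x)$, which literally equals $X(b,x)$ by hypothesis; this reduces the whole direction to an application of the two preceding lemmas.

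For the converse, suppose $a$ is a colimit of $\phi$. Then for all $x$, $X(a,x) = \CP X(\phi,\sy(x))$, and by exactly the same chain of equalities (Lemma \ref{Yoneda limits in PX} plus full faithfulness of $\sy$) the right-hand side equals $\sup_{i\in D}\inf_{j\geq i}X(x_j,x)$, which is the defining condition for $a$ to be a Yoneda limit of $\{x_i\}_{i\in D}$. So both directions in fact collapse to the single chain of identities
\[
\CP X(\phi,\sy(x)) \;=\; \CP X\Bigl(\sup_{i\in D}\inf_{j\geq i}\sy(x_j),\,\sy(x)\Bigr) \;=\; \sup_{i\in D}\inf_{j\geq i}X(x_j,x),
\]
the first equality because $\phi = \sup_i\inf_j X(-,x_j) = \sup_i\inf_j\sy(x_j)$ by definition of the Yoneda embedding, and the second by Lemma \ref{Yoneda limits in PX} applied to the forward Cauchy net $\{\sy(x_i)\}_{i\in D}$ together with full faithfulness of $\sy$. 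I expect the main (indeed only) subtlety to be checking that $\{\sy(x_i)\}_{i\in D}$ is genuinely a forward Cauchy net of $\CP X$ — which follows because $\sy$ is a functor, hence $X(x_j,x_k)\leq \CP X(\sy(x_j),\sy(x_k))$, so $\inf_{k\geq j\geq i}\CP X(\sy(x_j),\sy(x_k))\geq\inf_{k\geq j\geq i}X(x_j,x_k)$ and the forward Cauchy condition transfers — and that the weight $\sup_i\inf_j\phi_j$ appearing in Lemma \ref{Yoneda limits in PX} is computed pointwise, so it agrees with $\phi$. After those bookkeeping points, the equivalence is immediate. (One could alternatively give a bare-hands proof manipulating the $\swarrow$ and $\searrow$ operations directly, but routing everything through Lemmas \ref{Yoneda limits in PX} and the Yoneda lemma is shorter and avoids the continuity-of-$\with$ estimates.)
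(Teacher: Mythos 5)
Your proof is correct. The paper itself gives no proof of this lemma --- it is quoted from \cite[Lemma 46]{FSW} --- but your reduction of both directions to the single identity $\CP X(\phi,\sy(x))=\sup_{i\in D}\inf_{j\geq i}X(x_j,x)$, obtained by applying Lemma \ref{Yoneda limits in PX} to the forward Cauchy net $\{\sy(x_i)\}_{i\in D}$ and then using full faithfulness of $\sy$ (the Yoneda lemma), is exactly the chain of equalities the paper deploys in Step 1 of the proof of its characterization of ideals, so your argument is both valid and consistent with the paper's toolkit; the bookkeeping points you flag (that $\{\sy(x_i)\}$ is forward Cauchy and that joins and meets in $\CP X$ are pointwise) are indeed the only things to check.
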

	
	\begin{proposition}\label{Yoneda com via ideals} Let $X$ be a real-enriched category. Then, every ideal of $X$ has a colimit  if and only if every forward Cauchy net of $X$ has a Yoneda limit. \end{proposition}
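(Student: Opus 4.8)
The plan is to make both implications rest on Lemma~\ref{yoneda limit as colimits}, which says that for a forward Cauchy net $\{x_i\}_{i\in D}$ an element is a Yoneda limit of $\{x_i\}_{i\in D}$ precisely when it is a colimit of the weight $\phi_{\{x_i\}}\coloneqq\sup_{i\in D}\inf_{j\geq i}X(-,x_j)$ that the net generates. Granting the key fact that \emph{the weights of the form $\phi_{\{x_i\}}$, for $\{x_i\}_{i\in D}$ forward Cauchy, are exactly the ideals of $X$}, the proposition follows by bookkeeping: if every ideal has a colimit, then given a forward Cauchy net $\{x_i\}_{i\in D}$ the ideal $\phi_{\{x_i\}}$ has a colimit, which by Lemma~\ref{yoneda limit as colimits} is a Yoneda limit of $\{x_i\}_{i\in D}$; conversely, if every forward Cauchy net has a Yoneda limit, then given an ideal $\phi$ we write $\phi=\phi_{\{x_i\}}$ for some forward Cauchy net, take a Yoneda limit of $\{x_i\}_{i\in D}$, and invoke Lemma~\ref{yoneda limit as colimits} again to see that it is a colimit of $\phi$.

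So the work is to prove this key fact, in two steps. First I would show that every $\phi\coloneqq\phi_{\{x_i\}}$ with $\{x_i\}_{i\in D}$ forward Cauchy is an ideal. Since $\sy$ is fully faithful, $\{\sy(x_j)\}_{j\in D}$ is a forward Cauchy net of $\CP X$, so by Lemma~\ref{Yoneda limits in PX} it has $\phi=\sup_{i}\inf_{j\geq i}\sy(x_j)$ as a Yoneda limit, and together with the Yoneda lemma this yields the formula $\sub_X(\phi,\mu)=\CP X(\phi,\mu)=\sup_{i}\inf_{j\geq i}\mu(x_j)$ for all $\mu\in\CP X$. It then remains to check conditions (I1) and (I2). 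Condition (I1) is formal: $r\with(-)\colon[0,1]\to[0,1]$ is continuous and non-decreasing, hence preserves arbitrary infima, and it is a left adjoint, hence preserves suprema, so it commutes with $\sup_i\inf_{j\geq i}$. Condition (I2) is where the forward Cauchy hypothesis does genuine work: the inequality $\sub_X(\phi,\lam\vee\mu)\geq\sub_X(\phi,\lam)\vee\sub_X(\phi,\mu)$ is automatic, and for the reverse I would argue that along a tail of $\{x_i\}_{i\in D}$ one of $\lam,\mu$ must eventually stay above a given threshold $t$: if both $\{j:\lam(x_j)\leq t\}$ and $\{j:\mu(x_j)\leq t\}$ were cofinal, pick such indices $j,j'$ far out in the net and a common upper bound $k$; forward Cauchyness makes $X(x_j,x_k)$ and $X(x_{j'},x_k)$ as close to $1$ as we like, so the weight inequality $\nu(x_k)\with X(x_j,x_k)\leq\nu(x_j)$ applied to $\nu=\lam$ and $\nu=\mu$, together with continuity of $\with$ at $1$, forces $\lam(x_k)$ and $\mu(x_k)$ below $t$, contradicting $\max\{\lam(x_k),\mu(x_k)\}>t$.

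The second step --- every ideal $\phi$ is of the form $\phi_{\{x_i\}}$ for some forward Cauchy net --- is the technical heart and is exactly the content of Proposition~\ref{characterization of ideal}. Reading (I1) and (I2) pointwise via the Yoneda lemma shows that $\phi$ is inhabited, $\sup_{x}\phi(x)=1$, and ``directed'' in an approximate sense (for $x,y$ and $r<\phi(x)\wedge\phi(y)$ there is $z$ with $\phi(z)$, $X(z,x)\with r$ and $X(z,y)\with r$ all nearly optimal). From this data I would build a directed index set, e.g.\ $D=\{(x,n)\in X\times\bbN:\phi(x)>1-1/n\}$ with an order encoding this approximate directedness, put $x_{(x,n)}=x$, and verify that the resulting net is forward Cauchy and that $\phi_{\{x_{(x,n)}\}}=\phi$; this is the usual passage from a flat (rounded) ideal to a generating net.

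I expect this last step to be the main obstacle: turning the purely categorical flatness condition of Definition~\ref{defn of ideals} into an honest forward Cauchy net requires organizing the ``approximate'' directedness extracted from preservation of finite joins into a genuine net and then checking that it regenerates $\phi$. Within the first step, (I2) is the only point that truly uses forward Cauchyness rather than merely continuity of the t-norm, and it is handled by the cofinality argument sketched above.
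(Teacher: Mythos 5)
Your proposal is correct and takes essentially the same route as the paper: the paper's own proof is precisely the combination of Lemma~\ref{yoneda limit as colimits} with the fact that the ideals of $X$ are exactly the weights generated by forward Cauchy nets, which is the paper's Proposition~\ref{characterization of  ideal}. The ``key fact'' you isolate as the technical heart is that proposition verbatim, and your sketch of it (liminf formula via Lemma~\ref{Yoneda limits in PX} for one direction, a directed set of approximate elements --- the paper uses the formal balls $\mathrm{B}\phi$ --- for the other) matches the paper's argument up to minor variations in how (I2) and the indexing set are handled.
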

	
	\begin{proof} This follows  from a combination of  Lemma \ref{yoneda limit as colimits} and   Proposition \ref{characterization of  ideal} below. \end{proof}
	
	Let $X$ be a real-enriched category. A formal ball of $X$ is a pair $(x,r)$ with $x\in X$ and $r\in[0,1]$, $x$ is called the center and $r$ the radius. For  formal balls  $(x,r)$ and $(y,s)$, define \[(x,r)\sqsubseteq  (y,s) \quad\text{if}~ r\leq s\with X(x,y).\] Then $\sqsubseteq $ is reflexive and transitive, hence an order relation. We write $\mathrm{B}X$ for the set of  formal balls of $X$ ordered by $\sqsubseteq $. The ordered set $\mathrm{B}X$ of formal balls is closely related to $\CP X$.   It is not hard to see that \[(x,r)\sqsubseteq(y,s)\iff 1\leq \CP X(r\with\sy(x),s\with\sy(y)), \] so, when we identify a  formal ball $(x,r)$  with the weight $r\with\sy(x)$,   the order between formal balls coincides with that inherited from $(\CP X)_0$.

	The following proposition is an improvement of related results in  \cite{FS2002,LZZ2020,Vickers2005}. For sake of completeness we include a complete proof here, which is based on the argument of Theorem 3.10 and Theorem 3.13 in \cite{LZZ2020}. It should be noted that  in \cite{LZZ2020} a different order relation on $\mathrm{B}\phi$ is used. 
	
	\begin{proposition}   \label{characterization of  ideal} For each weight $\phi$ of a real-enriched category $X$, the following are equivalent: \begin{enumerate}[label=\rm(\arabic*)]   
			\item  $\phi$ is an   ideal. \item   $\sup_{x\in X}\phi(x)=1$ and  $\mathrm{B}\phi\coloneqq\{(x,r)\in \mathrm{B}X \mid   \phi(x)>r\}$ is a directed subset of $\mathrm{B}X$. \item  $\phi=\sup_{i\in D}\inf_{j\geq i} X(-,x_j)$ for some  forward Cauchy net $\{x_i\}_{i\in D}$ of $X$.\end{enumerate} \end{proposition}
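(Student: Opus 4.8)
The plan is to prove the cycle $(1)\Rightarrow(2)\Rightarrow(3)\Rightarrow(1)$. Throughout I identify a formal ball $(z,t)$ with the weight $t\with\sy(z)$, as in the discussion preceding the statement, so that the order of $\mathrm{B}X$ agrees with the one inherited from $(\CP X)_0$; I also use the identity $\phi=\sup\{t\with\sy(z)\mid (z,t)\in\mathrm{B}\phi\}$, which holds because $\phi=\sup_{z\in X}\phi(z)\with\sy(z)$ and $\phi(z)\with\sy(z)=\sup_{t<\phi(z)}t\with\sy(z)$ (the map $-\with\sy(z)$ preserves suprema), together with the equivalence $(x,r)\sqsubseteq(z,t)\iff r\leq t\with X(x,z)$.

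For $(1)\Rightarrow(2)$: inhabitedness of an ideal is recorded just above the statement, and it makes $\mathrm{B}\phi$ nonempty. For directedness, suppose towards a contradiction that $(x,r),(y,s)\in\mathrm{B}\phi$ have no common upper bound in $\mathrm{B}\phi$. Set $\phi_x:=\sup\{t\with\sy(z)\mid (z,t)\in\mathrm{B}\phi,\ t\with X(x,z)<r\}$ and define $\phi_y$ symmetrically with $(y,s)$; these are weights with $\phi_x,\phi_y\leq\phi$, and $\phi_x(x)\leq r<\phi(x)$, $\phi_y(y)\leq s<\phi(y)$. If some $(z,t)\in\mathrm{B}\phi$ satisfied neither $t\with X(x,z)<r$ nor $t\with X(y,z)<s$, then $(x,r)\sqsubseteq(z,t)\sqsupseteq(y,s)$, so $(z,t)$ would be a common upper bound; hence every generator $t\with\sy(z)$ of $\phi$ lies below $\phi_x$ or below $\phi_y$, and therefore $\phi=\phi_x\vee\phi_y$. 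Applying property (I2) to $1=\sub_X(\phi,\phi)=\sub_X(\phi,\phi_x\vee\phi_y)$ forces $\phi=\phi_x$ or $\phi=\phi_y$, each of which contradicts the strict inequalities above. Hence $\mathrm{B}\phi$ is directed.

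For $(2)\Rightarrow(3)$: I would take the directed set $\mathrm{B}\phi$ itself as index, with $x_{(z,t)}:=z$. Forward Cauchyness is immediate (if $(z_0,t_0)\sqsubseteq(z_j,t_j)\sqsubseteq(z_k,t_k)$ then $X(z_j,z_k)\geq t_j\geq t_0$, and $\sup\{t_0\mid (z_0,t_0)\in\mathrm{B}\phi\}=1$ by inhabitedness), and for $\phi=\sup_i\inf_{j\geq i}X(-,x_j)$ the only nontrivial inclusion uses that each $(z_0,t_0)\in\mathrm{B}\phi$ has, for every $r<1$, an upper bound $(z,t)\in\mathrm{B}\phi$ with $t\geq r$ (by directedness from a ball of radius $\geq r$, which exists by inhabitedness); then $t\with X(w,z)\leq\phi(z)\with X(w,z)\leq\phi(w)$ gives $X(w,z)\leq r\ra\phi(w)$, and $\inf_{r<1}(r\ra\phi(w))=\phi(w)$ by continuity of $\with$. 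For $(3)\Rightarrow(1)$: since $\sy$ is fully faithful, $\{\sy(x_i)\}_{i\in D}$ is forward Cauchy in $\CP X$ with $\phi=\sup_i\inf_{j\geq i}\sy(x_j)$, so Lemma \ref{Yoneda limits in PX} and the Yoneda lemma give $\sub_X(\phi,\xi)=\sup_i\inf_{j\geq i}\xi(x_j)$ for all $\xi\in\CP X$; by the formula in Corollary \ref{calculation of sup by tensors} it then suffices to check that $\sub_X(\phi,-)\colon\CP X\to\sV$ preserves tensors and finite joins. Tensors and the empty join are routine (using continuity of $\with$ and that $\phi$ is inhabited, hence nonzero). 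For binary joins the point is that each net $\big(\xi(x_i)\big)_{i\in D}$ converges in $[0,1]$, with limit $\sub_X(\phi,\xi)$: if $i_0$ witnesses forward Cauchyness at level $\delta$, the weight inequality gives $\xi(x_j)\geq(1-\delta)\with\xi(x_k)$ for $k\geq j\geq i_0$, whence $\sup_{k\geq j}\xi(x_k)\leq(1-\delta)\ra\xi(x_j)$ and, since $(1-\delta)\ra-$ preserves infima, $\limsup_i\xi(x_i)\leq(1-\delta)\ra\sub_X(\phi,\xi)$; letting $\delta\to0$ gives $\limsup_i\xi(x_i)\leq\sub_X(\phi,\xi)=\liminf_i\xi(x_i)$, so the limit exists, and continuity of $\max$ yields $\sub_X(\phi,\xi\vee\eta)=\sub_X(\phi,\xi)\vee\sub_X(\phi,\eta)$. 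I expect the crux to be $(1)\Rightarrow(2)$ --- discovering the weights $\phi_x,\phi_y$ that are forced to recover $\phi$ as a join when a common upper bound is absent --- with the convergence step in $(3)\Rightarrow(1)$ a secondary difficulty, forward Cauchyness being exactly what repairs the otherwise false identity $\liminf\max=\max\liminf$.
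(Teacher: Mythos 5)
Your proposal is correct and follows essentially the same route as the paper: the same cycle $(1)\Rightarrow(2)\Rightarrow(3)\Rightarrow(1)$, the same net on $\mathrm{B}\phi$ for $(2)\Rightarrow(3)$, and the same use of Lemma \ref{Yoneda limits in PX} plus the order-convergence of $\{\xi(x_i)\}$ for $(3)\Rightarrow(1)$. The only divergence is in $(1)\Rightarrow(2)$, where the paper argues directly with the weights $X(x,-)\ra r$ and $X(y,-)\ra s$ while you run the contrapositive with weights assembled from formal balls; both hinge on the same application of (I2), and your version checks out.
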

	
	\begin{proof} 
		$(1)\Rightarrow(2)$ We only need to check that $(\mathrm{B}\phi,\sqsubseteq)$ is  directed. Given $(x,r)$ and $(y,s)$ of $\mathrm{B}\phi$, consider the weights $\phi_1$ and $\phi_2$ given by \[\phi_1=  X(x,-)\ra r,\quad  \phi_2 =  X(y,-)\ra s.\] Since $\phi $ is an ideal,  then \begin{align*} \CP X(\phi,\phi_1\vee\phi_2) &=\CP X(\phi,\phi_1)\vee \CP X(\phi,\phi_2) =(\phi(x)\ra r)\vee(\phi(y)\ra s). \end{align*}
		Since \[(\phi(x)\ra r)\vee(\phi(y)\ra s)<1,\]   there exists some $z$ such that \[ \phi(z)>(X(x,z)\ra r)\vee(X(y,z)\ra s) . \] Pick some $t\in[0,1]$ satisfying \[ \phi(z)>t>(X(x,z)\ra r)\vee(X(y,z)\ra s).\] Then    $(z,t)\in \mathrm{B}\phi$. We assert that $(z,t)$ is an upper bound of $(x,r)$ and $ (y,s)$, hence $(\mathrm{B}\phi,\sqsubseteq)$  is directed. To see this, let $u= X(x,z)\ra r$. Then $u$ is, by definition, the largest element of $[0,1]$ with $u\with X(x,z)\leq r$. From $t>u$ we infer that $r< t\with X(x,z)$, hence $(x,r)\sqsubseteq(z,t)$. Likewise, $(y,s)\sqsubseteq(z,t)$.

		$(2)\Rightarrow(3)$ Write an element  in $\mathrm{B}\phi$ as a pair $(x_i,r_i)$ and define a net \[\mathfrak{x}\colon (\mathrm{B}\phi,\sqsubseteq)\to X\] by $\mathfrak{x}(x_i,r_i)=x_i.$ Then $\mathfrak{x}$ is a forward Cauchy net and  for all $x\in X$,  \[ \phi(x)=\sup_{(x_i,r_i)\in\mathrm{B}\phi}\inf_{(x_j,r_j)\sqsupseteq(x_i,r_i) }X(x,x_j). \]

		$(3)\Rightarrow(1)$   Let $\{x_i\}_{i\in D}$ be a forward Cauchy net of $X$. We wish to show that $\phi\coloneqq\sup_{i\in D}\inf_{j\geq i}X(-,x_j)$  is an ideal.

		\textbf{Step 1}. We show that for each weight $\lam$ of $X$, \[\sub_X(\phi,\lam)= \inf_{i\in D}\sup_{j\geq i}\lam(x_j).\]
		
	Since $\lam\colon X^{\rm op}\to \sV$ is a functor,   $\{\lam(x_i)\}_{i\in D}$ is a forward Cauchy net of $\sV^{\rm op}=([0,1], \alpha_R)$, then $\{\lam(x_i)\}_{i\in D}$ converges in the usual topology. Otherwise, it would have two different cluster points,  contradicting that $$\bv_{i\in D}\bw_{i\leq j\leq k}(\lam(x_k)\ra \lam(x_j))=1.$$ So, $\sup_{i\in D}\inf_{j\geq i}\lam(x_j) = \inf_{i\in D}\sup_{j\geq i}\lam(x_j)$. Therefore, \begin{align*}\sub_X(\phi,\lam)&= \sub_X\Big(\sup_{i\in D}\inf_{j\geq i} X(-,x_j),\lam\Big) \\ &
		=\sup_{i\in D}\inf_{j\geq i}\CP X(X(-,x_j),\lam) & \text{(Lemma \ref{Yoneda limits in PX})}\\ &= \sup_{i\in D}\inf_{j\geq i}\lam(x_j)& \text{(Yoneda lemma)}\\ &= \inf_{i\in D}\sup_{j\geq i}\lam(x_j). \end{align*}
	
	\textbf{Step 2}. $\sub_X(\phi,p\with\lam) =p\with\sub_X(\phi,\lam)$ for all  $\lam \in\CP X$ and  $p\in[0,1]$. 
	
	By \textbf{Step 1}, we have \begin{align*}\sub_X(\phi,p\with\lam) &= \inf_{i\in D}\sup_{j\geq i}p\with\lam(x_j) \\ &= p\with \inf_{i\in D}\sup_{j\geq i}\lam(x_j) &\text{($\with$ is continuous)} \\ &= p\with\sub_X(\phi,\lam).\end{align*}
	
	\textbf{Step 3}. $\sub_X(\phi, \lam\vee\mu)=\sub_X(\phi, \lam)\vee\sub_X(\phi, \mu)$ for all  weights  $\lam,\mu$ of $X$.
	
	For this we calculate: 
 \begin{align*} \sub_X(\phi, \lam)\vee\sub_X(\phi, \mu)  &= \inf_{i\in D}\sup_{j\geq i}\lam(x_j)\vee \inf_{i\in D}\sup_{j\geq i}\mu(x_j)& \text{(\textbf{Step 1})} \\ & = \inf_{i\in D}\sup_{j\geq i}(\lam(x_j)\vee\mu(x_j))  \\ & = \sub_X(\phi, \lam\vee\mu).& \text{(\textbf{Step 1})}\end{align*}
		The proof is completed.
	\end{proof}
	
	In category theory, a filtered colimit of representables of a category is called an ind-object of that category, see e.g. \cite[Chapter VI]{Johnstone}. For each forward Cauchy net $\{x_i\}_{i\in D}$ of a real-enriched category $X$, the net $\{\sy(x_i)\}_{i\in D}$ is forward Cauchy in $\CP X$ by the Yoneda lemma, thus,  condition (3) in Proposition \ref{characterization of  ideal} indicates that ideals of real-enriched categories may be viewed  as sort of ind-objects in the real-enriched context. 
	
	A real-enriched category  $X$ is said to be \emph{Yoneda complete}  if  every forward Cauchy net of $X$ has a unique Yoneda limit.  By Proposition \ref{Yoneda com via ideals},    $X$ is Yoneda complete if and only if every ideal of $X$ has a unique colimit. It is trivial that every Yoneda complete real-enriched category is Cauchy complete.
	
	\begin{proposition} \label{IX is Yoneda complete} For each real-enriched category $X$, the category $\CI X$ of ideals of $X$ is Yoneda complete. \end{proposition}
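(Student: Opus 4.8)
The plan is to produce, for an arbitrary forward Cauchy net of $\CI X$, an explicit Yoneda limit lying inside $\CI X$; uniqueness will then come for free. Note first that $\CI X$ is a full subcategory of $\CP X$, so $\CI X(\phi_1,\phi_2)=\sub_X(\phi_1,\phi_2)$ for ideals $\phi_1,\phi_2$; being full in the separated category $\CP X$, it is itself separated, so any net of $\CI X$ has at most one Yoneda limit and only existence needs to be shown.

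So let $\{\phi_i\}_{i\in D}$ be a forward Cauchy net of $\CI X$. By fullness it is a forward Cauchy net of $\CP X$ as well, so Lemma \ref{Yoneda limits in PX} gives that $\phi\coloneqq\bv_{i\in D}\bw_{j\geq i}\phi_j$ is a Yoneda limit of $\{\phi_i\}_{i\in D}$ in $\CP X$; explicitly, $\sub_X(\phi,\lam)=\bv_{i\in D}\bw_{j\geq i}\sub_X(\phi_j,\lam)$ for every $\lam\in\CP X$. The entire argument then reduces to one claim: $\phi$ is an ideal. Indeed, once we know $\phi\in\CI X$, restricting the displayed identity to weights $\lam\in\CI X$ says precisely that $\phi$ is a Yoneda limit of $\{\phi_i\}_{i\in D}$ in $\CI X$, and by the previous paragraph it is the only one.

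To prove $\phi$ is an ideal I would follow the proof of Proposition \ref{characterization of  ideal}, carried out in $\CP X$ in place of $X$ and with $\{\phi_i\}_{i\in D}$ in place of the net of points. Fix $\lam\in\CP X$. The weight $\CP X(-,\lam)=\sub_X(-,\lam)$ of $\CP X$, regarded as a functor $\CP X\to\sV^{\rm op}$, sends the forward Cauchy net $\{\phi_i\}_{i\in D}$ to a forward Cauchy net $\{\sub_X(\phi_i,\lam)\}_{i\in D}$ of $\sV^{\rm op}=([0,1],\alpha_R)$; such a net converges in the usual topology of $[0,1]$, since two distinct cluster points would contradict forward Cauchyness, exactly as argued in the proof of Proposition \ref{characterization of  ideal}. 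Hence $\bv_{i\in D}\bw_{j\geq i}\sub_X(\phi_j,\lam)=\bw_{i\in D}\bv_{j\geq i}\sub_X(\phi_j,\lam)$, and the Yoneda limit identity above becomes
\[\sub_X(\phi,\lam)=\bw_{i\in D}\bv_{j\geq i}\sub_X(\phi_j,\lam)\qquad\text{for all }\lam\in\CP X.\]
It then remains to verify conditions (I1) and (I2) for $\phi$. For (I1): each $\phi_j$ is an ideal, so $\sub_X(\phi_j,r\with\lam)=r\with\sub_X(\phi_j,\lam)$, and since $\{\sub_X(\phi_j,\lam)\}_j$ converges and $r\with-$ is continuous, $\sub_X(\phi,r\with\lam)=\bw_{i\in D}\bv_{j\geq i}(r\with\sub_X(\phi_j,\lam))=r\with\bw_{i\in D}\bv_{j\geq i}\sub_X(\phi_j,\lam)=r\with\sub_X(\phi,\lam)$. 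For (I2): the same computation with the continuous operation $\vee$ in place of $r\with-$, using $\sub_X(\phi_j,\lam\vee\mu)=\sub_X(\phi_j,\lam)\vee\sub_X(\phi_j,\mu)$ together with the convergence of $\{\sub_X(\phi_j,\lam)\}_j$ and $\{\sub_X(\phi_j,\mu)\}_j$, yields $\sub_X(\phi,\lam\vee\mu)=\sub_X(\phi,\lam)\vee\sub_X(\phi,\mu)$. Thus $\phi$ is an ideal and the proof is complete.

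The only genuinely non-formal point is this last step: a priori the operations $r\with-$ and $\vee$ commute with honest limits of real nets but not with the iterated $\bv\bw$, so one really needs the observation that $\sub_X(-,\lam)$ turns a forward Cauchy net of $\CP X$ into a convergent net of $[0,1]$. Everything else — fullness of $\CI X$ in $\CP X$, separatedness, and Lemma \ref{Yoneda limits in PX} identifying $\bv_{i\in D}\bw_{j\geq i}\phi_j$ as a Yoneda limit — is immediate from what has already been established.
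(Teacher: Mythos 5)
Your proof is correct and follows essentially the same route as the paper: both arguments amount to showing that $\CI X$ is closed in $\CP X$ under Yoneda limits, the paper phrasing this via weighted colimits of the inclusion $\CI X\to\CP X$ and leaving the verification to the reader, while you carry it out explicitly by checking that the liminf weight $\sup_{i\in D}\inf_{j\geq i}\phi_j$ supplied by Lemma \ref{Yoneda limits in PX} satisfies (I1) and (I2). Your key observation --- that $\sub_X(-,\lam)$ sends a forward Cauchy net of $\CP X$ to a convergent net of $[0,1]$, so that the iterated $\sup\inf$ becomes an honest limit commuting with $\with$ and $\vee$ --- is exactly the mechanism used in Step 1 of the paper's proof of Proposition \ref{characterization of  ideal}, so the details you supply are the intended ones.
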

	
	\begin{proof} Actually, $\CI X$ is closed in $\CP X$ under Yoneda limits. To see this, by Proposition \ref{Yoneda com via ideals} and Lemma \ref{yoneda limit as colimits}, it suffices to check that for each ideal $\Phi$ of $\CI X$, the colimit of the inclusion functor $\CI X\to\CP X$ weighted by $\Phi$ is an ideal of $X$, hence a colimit of $\Phi$ in $\CI X$.   Details are left to the reader. \end{proof}

	\begin{lemma}\label{Cauchy net implies Cauchy weight} A forward Cauchy net $\{x_i\}_{i\in D}$ of a real-enriched category $X$ is Cauchy if and only if  the weight $$\phi\coloneqq\sup_{i\in D}\inf_{j\geq i}X(-,x_j)$$ generated by $\{x_i\}_{i\in D}$ is Cauchy. \end{lemma}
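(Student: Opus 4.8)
The plan is to exhibit the left adjoint explicitly. Set $\psi\coloneqq\sup_{i\in D}\inf_{j\geq i}X(x_j,-)$, which is readily checked to be a coweight of $X$, and prove the chain of equivalences: $\{x_i\}_{i\in D}$ is Cauchy $\iff$ $\psi\dashv\phi$ $\iff$ $\phi$ is Cauchy. The implication $\psi\dashv\phi\Rightarrow\phi$ Cauchy is immediate from the definition of a Cauchy weight, so the work lies in the two remaining implications.

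First I would compute the two composites in closed form, using only that $D$ is directed. Since $\with$ distributes over suprema in each argument, one may pull the outer suprema past $\with$ and, given two indices of $D$, pass to a common upper bound; the resulting double supremum collapses and yields
\[(\psi\circ\phi)(x,y)=\sup_{m\in D}\Big(\inf_{j\geq m}X(x_j,y)\Big)\with\Big(\inf_{k\geq m}X(x,x_k)\Big)\]
and
\[\phi\circ\psi=\sup_{m\in D}\ \sup_{y\in X}\Big(\inf_{k\geq m}X(y,x_k)\Big)\with\Big(\inf_{j\geq m}X(x_j,y)\Big).\]
Estimating the $m$-th term of the first expression by $X(x_m,y)\with X(x,x_m)\leq X(x,y)$ via the triangle inequality shows $\psi\circ\phi\leq X$ with no hypothesis beyond directedness. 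For the second expression, I would show that $\phi\circ\psi\geq1$ is equivalent to $\sup_{m\in D}\inf_{j,k\geq m}X(x_j,x_k)=1$, i.e.\ to $\{x_i\}_{i\in D}$ being Cauchy: if the net is Cauchy, the instance $y=x_m$ bounds the $m$-th term below by $c_m\with c_m$, where $c_m\coloneqq\inf_{j,k\geq m}X(x_j,x_k)$, and $\sup_m(c_m\with c_m)=(\sup_m c_m)\with(\sup_m c_m)=1$ since $\{c_m\}$ is increasing and $\with$ preserves suprema; conversely, if $\phi\circ\psi\geq1$, then for each $\varepsilon>0$ some $m$ and $y$ make the $m$-th term exceed $1-\varepsilon$, so both inner infima exceed $1-\varepsilon$ (because $\with\leq\min$), whence transitivity forces $X(x_j,x_k)>(1-\varepsilon)\with(1-\varepsilon)$ for all $j,k\geq m$, and letting $\varepsilon\to0$ (continuity of $\with$) gives $\sup_m c_m=1$. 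This establishes $\{x_i\}_{i\in D}$ Cauchy $\iff$ $\psi\dashv\phi$, which in particular proves the forward direction of the lemma.

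For the converse I must rule out that $\phi$ is Cauchy with a left adjoint different from $\psi$. If $\psi'\dashv\phi$, then by Lemma \ref{adjoint_arrow_calculation} $\psi'=X\swarrow\phi$, so $\psi'(x)=\sub_X(\phi,\sy(x))$; and the computation carried out in Step 1 of the proof of Proposition \ref{characterization of  ideal}, which is valid for every forward Cauchy net, gives $\sub_X(\phi,\lambda)=\sup_{i\in D}\inf_{j\geq i}\lambda(x_j)$ for every weight $\lambda$ of $X$. Taking $\lambda=\sy(x)=X(-,x)$ yields $\psi'(x)=\sup_{i\in D}\inf_{j\geq i}X(x_j,x)=\psi(x)$, so $\psi'=\psi$, hence $\phi\circ\psi\geq1$ and, by the previous paragraph, $\{x_i\}_{i\in D}$ is Cauchy. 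The main technical point is the ``$\phi\circ\psi\geq1\Rightarrow$ Cauchy'' step, where one must use $\with\leq\min$ to decouple a product close to $1$ into its two factors and then chain two instances of the triangle inequality; continuity of $\with$ (equivalently, $\sup_{t<1}(t\with t)=1$) is what lets the estimate pass to the limit.
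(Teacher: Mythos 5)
Your proposal is correct and follows essentially the same route as the paper: it exhibits $\psi=\sup_{i\in D}\inf_{j\geq i}X(x_j,-)$ as the left adjoint for the forward direction, and for the converse identifies the left adjoint of $\phi$ as $X\swarrow\phi=\psi$ via the Yoneda-limit computation (your appeal to Step 1 of Proposition \ref{characterization of  ideal} is just a repackaging of Lemma \ref{Yoneda limits in PX}, so there is no circularity) and then extracts Cauchyness of the net from $\phi\circ\psi\geq 1$. Your $\varepsilon$-argument with $\with\leq\min$ is a correct, slightly more pedestrian version of the paper's one-line chain of inequalities, and you supply details for the forward adjunction that the paper leaves implicit.
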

	
	\begin{proof}The conclusion is a slight improvement of \cite[Proposition 4.13]{HR2013}. If $\{x_i\}_{i\in D}$ is a  Cauchy net, then $\phi$ is right adjoint to the coweight $$\psi\coloneqq\sup_{i\in D}\inf_{j\geq i}X(x_j,-),$$  hence Cauchy. Conversely, suppose   $\phi$ is a Cauchy weight. By Proposition \ref{adjoint_arrow_calculation}, the left adjoint of $\phi$ is given by $\psi\coloneqq X\swarrow\phi$.   Since,  by Lemma \ref{Yoneda limits in PX},  $\phi$ is a Yoneda limit of the forward Cauchy net $\{X(-,x_i)\}_{i\in D}$ in $\CP X$, it follows that for all $x\in X$, \begin{align*}
			\psi(x) &=\CP X(\phi,X(-,x)) \\
			&=\sup_{i\in D}\inf_{k\geq i}\CP X(X(-,x_k),X(-,x))\\
			&=\sup_{i\in D}\inf_{k\geq i}X(x_k,x).
		\end{align*}
		Therefore,
		\begin{align*}\sup_{i\in D}\inf_{k,j\geq i}X(x_k,x_j)&\geq \sup_{x\in X}\sup_{i\in D}\inf_{k,j\geq i}X(x,x_j)\with X(x_k,x)\\ &\geq \sup_{x\in X}\Big[\Big(\sup_{i\in D}\inf_{j\geq i}X(x,x_j)\Big)\with \Big(\sup_{i\in D}\inf_{k\geq i}X(x_k,x)\Big)\Big]\\ & =\sup_{x\in X} \phi(x)\with \psi(x)\\ & = 1,\end{align*}  showing that $\{x_i\}_{i\in D}$  is a Cauchy net.
	\end{proof}

	Now we present the main result of this section.
	
	\begin{theorem}\label{Smyth via bilimit} A real-enriched category $X$, the following are equivalent: \begin{enumerate}[label=\rm(\arabic*)] 
			\item $X$ is Smyth complete. \item Every forward Cauchy net $\{x_i\}_{i\in D}$ of $X$ has a unique bilimit.  \item $X$ is separated and every ideal of $X$ is representable.
			\item $X$ is Cauchy complete and every ideal of $X$ is a Cauchy weight.  
		\end{enumerate} In this case, $X$ is Yoneda complete. \end{theorem}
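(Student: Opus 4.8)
The plan is to prove the cyclic chain $(1)\Rightarrow(2)\Rightarrow(3)\Rightarrow(4)\Rightarrow(1)$ and then to read off Yoneda completeness from, say, $(2)$. Two previously established facts will carry most of the weight. The first is Lemma~\ref{bilimit of Cauchy net}: for a \emph{Cauchy} net, being a bilimit, being a Yoneda limit, and being a limit in the open ball topology of the symmetrization all coincide. The second is Proposition~\ref{characterization of ideal}, which lets me move freely between an ideal $\phi$ of $X$ and a forward Cauchy net $\{x_i\}_{i\in D}$ with $\phi=\sup_{i\in D}\inf_{j\geq i}X(-,x_j)$. The step I expect to be the main obstacle is bridging \emph{forward} Cauchy and Cauchy: Lemma~\ref{bilimit of Cauchy net} only upgrades a Yoneda limit to a bilimit when the net is Cauchy, so at the moment I have produced a Yoneda limit I must already know the net is Cauchy. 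I would handle this by showing that, under the hypotheses, the generated weight $\phi$ is not merely an ideal but a Cauchy weight, and then invoking Lemma~\ref{Cauchy net implies Cauchy weight}, which says a forward Cauchy net is Cauchy precisely when the weight it generates is.

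For $(1)\Rightarrow(2)$ I would use Proposition~\ref{FC in Smyth is C}: Smyth completeness already forces $X$ to be separated and every forward Cauchy net to be Cauchy, so a forward Cauchy net $\{x_i\}_{i\in D}$ is Cauchy and, by hypothesis, converges to a unique point $a$ in the open ball topology of the symmetrization; Lemma~\ref{bilimit of Cauchy net} then identifies $a$ as a bilimit, and it is the unique one since bilimits are unique up to isomorphism and $X$ is separated. For $(2)\Rightarrow(3)$ I would first test $(2)$ on constant nets: a constant net is forward Cauchy and its bilimits are exactly the elements isomorphic to its value, so uniqueness of the bilimit forces isomorphic elements to coincide, i.e.\ $X$ is separated. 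Then, given an ideal $\phi$, I would write $\phi=\sup_{i\in D}\inf_{j\geq i}X(-,x_j)$ for a forward Cauchy net $\{x_i\}_{i\in D}$ using Proposition~\ref{characterization of ideal}, let $a$ be its bilimit, and read off $\phi(x)=\sup_{i\in D}\inf_{j\geq i}X(x,x_j)=X(x,a)$ for every $x$; hence $\phi=\sy(a)$ is representable.

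For $(3)\Rightarrow(4)$ the point is that a representable weight $X(-,a)$ is a right adjoint (its left adjoint is the representable coweight $X(a,-)$), hence a Cauchy weight, so $(3)$ makes every ideal a Cauchy weight; and since every Cauchy weight is an ideal, $(3)$ makes every Cauchy weight representable, so $X$ is Cauchy complete. For $(4)\Rightarrow(1)$, given a forward Cauchy net $\{x_i\}_{i\in D}$, I would note that the generated weight $\phi=\sup_{i\in D}\inf_{j\geq i}X(-,x_j)$ is an ideal by Proposition~\ref{characterization of ideal}, hence a Cauchy weight by $(4)$, hence $\{x_i\}_{i\in D}$ is a Cauchy net by Lemma~\ref{Cauchy net implies Cauchy weight}; Cauchy completeness then makes $\phi$ representable, say $\phi=X(-,a)$, so $a=\colim\phi$ is a Yoneda limit of the net by Lemma~\ref{yoneda limit as colimits}, and since the net is Cauchy, Lemma~\ref{bilimit of Cauchy net} promotes $a$ to a bilimit and hence to a limit of the net in the open ball topology of the symmetrization. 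Uniqueness of that limit follows because any such limit would again be a bilimit, hence isomorphic to $a$, hence equal to $a$ by separatedness; so $X$ is Smyth complete. For the final assertion, a bilimit is always a Yoneda limit and Yoneda limits are unique up to isomorphism, so under these equivalent conditions (where $X$ is separated) statement $(2)$ gives that every forward Cauchy net has a unique Yoneda limit, i.e.\ $X$ is Yoneda complete.
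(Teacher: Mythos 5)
Your proof is correct and follows essentially the same route as the paper: the same cycle $(1)\Rightarrow(2)\Rightarrow(3)\Rightarrow(4)\Rightarrow(1)$ resting on Proposition~\ref{FC in Smyth is C}, Lemma~\ref{bilimit of Cauchy net}, Proposition~\ref{characterization of ideal}, and Lemma~\ref{Cauchy net implies Cauchy weight}. The only (welcome) refinements are that you make the separatedness in $(2)\Rightarrow(3)$ explicit via constant nets and read representability of the ideal directly off the bilimit equations rather than passing through Cauchy weights.
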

	
	\begin{proof} $(1)\Rightarrow(2)$ By Proposition \ref{FC in Smyth is C}, every forward Cauchy net of a Smyth complete real-enriched category is a Cauchy net, then the conclusion follows from Lemma \ref{bilimit of Cauchy net}.

		$(2)\Rightarrow(3)$ It suffices to show that every ideal $\phi$ of $X$ is a Cauchy weight and has a colimit. By Proposition \ref{characterization of ideal}  there is a forward Cauchy net $\{x_i\}_{i\in D}$ of $X$ such that $$\phi=\sup_{i\in D}\inf_{j\geq i} X(-,x_j).$$ By assumption, there is some $a\in X$ such that for all $x\in X$, \[\sup_{i\in D}\inf_{j\geq i}X(x,x_j)=X(x,a), \quad   \sup_{i\in D}\inf_{j\geq i}X(x_j,x)=X(a,x).  \]   
		Putting $x=a$ one sees that $\{x_i\}_{i\in D}$ is a Cauchy net  of $X$, then $\phi$ is a Cauchy weight by Lemma \ref{Cauchy net implies Cauchy weight}, and consequently,  $a$ is a colimit of  $\phi$ by the argument of Proposition \ref{colimit =bilimit for Cauchy net}.

		$(3)\Rightarrow(4)$  Obvious.
		
		$(4)\Rightarrow(1)$ Suppose $\{x_i\}_{i\in D}$ is a forward Cauchy net  of $X$. Since every ideal of $X$ is a Cauchy weight, by Lemma \ref{Cauchy net implies Cauchy weight}   the net $\{x_i\}_{i\in D}$ is Cauchy, so it has a unique bilimit by Cauchy completeness of $X$ and the argument of Proposition \ref{colimit =bilimit for Cauchy net}, then by  Lemma \ref{bilimit of Cauchy net}, it converges uniquely in the open ball topology of the symmetrization of $X$. 
	\end{proof}

	\section{\bf As a real-valued topological property} \label{As a real-valued topological property}
	
	In this section we show that  Smyth completeness can be characterized via properties of the Alexandroff real-valued topology and the Scott real-valued topology of real-enriched categories, without resort to their symmetrization.   
	
	\begin{definition} (\cite{LaiT2017a})
		Let $X$ be a set. A {real-valued topology} on   $X$  is a map $$\delta\colon X\times 2^X\to [0,1]$$ such that for all $x\in X$  and $A,B\in 2^X$,  
		\begin{enumerate}[label=\rm(A\arabic*)] 
			\item $\delta(x,\{x\})=1$;
			\item  $\delta(x,\varnothing)=0$;
			\item  $\delta(x,A\cup B)= \delta(x,A)\vee\delta(x,B)$;
			\item  $\delta(x,A)\geq  \Big(\inf\limits_{b\in B}\delta(b,A)\Big)\with\delta(x,B)$.
		\end{enumerate} The pair $(X,\delta)$ is called a real-valued topological space.  \end{definition} 
	
	The value $\delta(x,A)$ is interpreted as the  degree that $x$ is in the closure of $A$. The axiom (A4) is idempotency of the closure operator in the enriched context. When $\with$ is isomorphic to the product t-norm, a real-valued topological space is essentially an approach space of Lowen \cite{Lowen1997,Lowen2015}.

	\begin{example}(\cite{YuZ2022}) \label{the space K} Define $$\delta_{\mathbb{K}} \colon[0,1]\times2^{[0,1]}\to[0,1]$$ by $$ \delta_{\mathbb{K}}(x,A)= \begin{cases} \inf A \rightarrow x  & A\neq\varnothing,\\ 0  & A=\varnothing \end{cases}$$ for each element $x$ and each subset $A$ of $[0,1]$. Then $\delta_\mathbb{K}$ is a real-valued topology on $[0,1]$. The space $\mathbb{K}=([0,1],\delta_\mathbb{K})$ plays a role in real-valued topology analogous to that of the Sierpi\'{n}ski space in topology.\end{example}   
	
	Suppose   $(X,\delta_X)$ and $(Y,\delta_Y)$ are real-valued topological spaces.  A map $f\colon X\to Y$  is   {continuous}  if 
	$$\delta_X(x,A)\leq\delta_Y(f(x),f(A)) $$ for all $x\in X$ and $A\subseteq X$.  The category of real-valued topological spaces and continuous maps  is denoted by $$[0,1]\text{-}\mathbf{Top}.$$    
	
	As in the case for topological spaces, real-valued topological spaces can be described in different ways, see e.g. \cite{LaiT2017a,Lowen1997,Lowen2015}. In the following we present the  description by closed (fuzzy) sets. In the theory of approach spaces (i.e., real-valued topological spaces with $\with$ being isomorphic to the product t-norm), closed sets are called \emph{lower regular functions} in \cite{Lowen1997,Lowen2015}. We'd like to note that for a general continuous t-norm,  we do not know any ``pleasant''  description of real-valued topological spaces by open (fuzzy) sets. This is one of the difficulties one has to face in real-valued topology.  
	
	Let $(X,\delta)$ be a real-valued topological space. A \emph{closed (fuzzy) set} of  $(X,\delta)$ is  a continuous  function $$\lam\colon (X,\delta)\to ([0,1],\delta_\mathbb{K}),$$    where $([0,1],\delta_\mathbb{K})$ is the space in Example \ref{the space K}.

	\begin{example}\label{cotopology of K} (\cite{YuZ2022}) A   closed set of the real-valued topological space $([0,1], \delta_\mathbb{K})$ is exactly a right continuous coweight of the real-enriched category ${\sf V}=([0,1],\alpha_L)$. Said differently, a closed set of $([0,1], \delta_\mathbb{K})$ is a map $\lambda\colon [0,1]\to[0,1]$ such that \begin{enumerate}[label=(\roman*)]   \item $y\ra x\leq\lambda(y)\ra\lambda(x)$ for all $x,y\in[0,1]$, in particular $\lam$ is monotone;   \item $\lambda(x)=\inf_{y>x}\lam(y)$ for each   $x<1$. \end{enumerate}
	\end{example} 
	
	\begin{proposition} {\rm (\cite{Lowen2015,YuZ2022})} \label{closed sets}
		For each real-valued topological space $(X,\delta)$, the set $\CC_\delta$ of   closed sets  satisfies the following conditions:
		\begin{enumerate}[label=\rm(C\arabic*)]   
			\item  $\lam, \mu\in\CC_\delta\implies \lam\vee\mu\in\CC_\delta$;
			\item {$\lam\in\CC_\delta\implies p\with\lam\in\CC_\delta$} for all $p\in[0,1]$;	\item  $\{\lam_i\}_{i\in I}\subseteq\CC_\delta\implies \inf_{i\in I}\lam_i\in\CC_\delta$; \item {$\lam\in\CC_\delta\implies p\rightarrow\lam\in\CC_\delta$} for all $p\in[0,1]$. \end{enumerate} In other words, $\CC_\delta$ is closed  in $([0,1],\sub_X)$  under formation of finite (enriched) colimits and arbitrary (enriched) limits.  
		
		Conversely, if $\CC\subseteq[0,1]^X$ satisfies (C1)-(C4),   there is a unique real-valued topology $\delta$ on $X$ such that $\CC$ is its set of closed sets. Explicitly, $\delta$ is the coarsest real-valued topology on $X$ that makes $\lam\colon(X,\delta)\to([0,1],\delta_\mathbb{K})$ continuous for all $\lam\in\CC$. 
	\end{proposition}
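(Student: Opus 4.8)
The plan is to establish the two assertions separately. \emph{Closed sets satisfy (C1)--(C4).} The first step is to rewrite the condition ``$\lambda\in\CC_\delta$''. Using the formula for $\delta_{\mathbb{K}}$ from Example \ref{the space K}, a map $\lambda\colon X\to[0,1]$ is continuous into $([0,1],\delta_{\mathbb{K}})$ if and only if $\delta(x,A)\with\inf_{a\in A}\lambda(a)\le\lambda(x)$ for all $x\in X$ and all $A\subseteq X$ (the case $A=\varnothing$ being vacuous since $\delta(x,\varnothing)=0$). Granting this, (C2)--(C4) become one-line computations: (C2) uses that $p\with(-)$ preserves infima (valid for a continuous t-norm), (C3) uses monotonicity, and (C4) uses $p\with(p\ra z)\le z$ together with $p\ra(-)$ being monotone. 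For (C1), given closed $\lambda,\mu$ and a nonempty $A$, put $c=\inf_{a\in A}\max\{\lambda(a),\mu(a)\}$ and write $A=A_1\cup A_2$ with $A_1=\{a\in A:\lambda(a)\ge c\}$ and $A_2=\{a\in A:\mu(a)\ge c\}$; then $\delta(x,A)=\delta(x,A_1)\vee\delta(x,A_2)$ by (A3), while $\delta(x,A_i)\with c\le\lambda(x)$ resp.\ $\mu(x)$ by closedness, so distributivity of $\with$ over $\vee$ yields $\delta(x,A)\with c\le(\lambda\vee\mu)(x)$, which is closedness of $\lambda\vee\mu$ since $\inf_A(\lambda\vee\mu)=c$.

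\emph{Construction of $\delta$ from $\CC$.} For $\nu\in[0,1]^X$ put $\mathrm{cl}_{\CC}(\nu):=\inf\{\lambda\in\CC:\lambda\ge\nu\}$; this is well defined because (C3) applied to the empty family forces $\mathbf 1\in\CC$, and it lies in $\CC$ again by (C3), so $\mathrm{cl}_{\CC}$ is a closure operator on $[0,1]^X$ whose fixed points are exactly the members of $\CC$ (note also $\mathbf 0\in\CC$ by (C2) with $p=0$). Now define $\delta(x,A):=\mathrm{cl}_{\CC}(\mathbf 1_A)(x)$; one checks $\delta(x,A)=\inf\{\lambda(x):\lambda\in\CC,\ \lambda|_A\equiv 1\}=\inf_{\lambda\in\CC}\delta_{\mathbb{K}}(\lambda(x),\lambda(A))$, the second identity using (C4) (for any $\lambda\in\CC$ the function $(\inf_A\lambda)\ra\lambda$ lies in $\CC$ and equals $1$ on $A$). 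Axioms (A1), (A2) are immediate; (A3) holds because $\mathrm{cl}_{\CC}$ preserves finite joins, which is immediate from (C1); and (A4) follows from the fact that $\mathrm{cl}_{\CC}$ commutes with $p\with(-)$ (a short computation from (C2), (C4), and preservation of infima by $p\with(-)$), applied with $p=\inf_{b\in B}\delta(b,A)$ to the crisp set $B$: then $\delta(\cdot,A)\ge p\with\mathbf 1_B$ forces $p\with\delta(\cdot,B)=\mathrm{cl}_{\CC}(p\with\mathbf 1_B)\le\delta(\cdot,A)$. That $\CC\subseteq\CC_\delta$ is exactly the reformulated closedness condition tested against $(\inf_A\lambda)\ra\lambda\in\CC$. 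Coarsest-ness and uniqueness both reduce to the standard fact (idempotency of the closure operator of a real-valued topology) that any real-valued topology $\delta'$ satisfies $\delta'(\cdot,A)=$ the least $\delta'$-closed fuzzy set $\ge\mathbf 1_A$: hence any $\delta'$ making every $\lambda\in\CC$ closed has $\delta'(\cdot,A)\le\mathrm{cl}_{\CC}(\mathbf 1_A)=\delta(\cdot,A)$, and any $\delta'$ with $\CC_{\delta'}=\CC$ has $\delta'(\cdot,A)=\mathrm{cl}_{\CC}(\mathbf 1_A)=\delta(\cdot,A)$.

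\emph{The inclusion $\CC_\delta\subseteq\CC$.} This is the only substantial point. It is equivalent to $\mathrm{cl}_{\CC}=\overline{(-)}^{\,\delta}$, where $\overline{\nu}^{\,\delta}(x)=\sup_{A\subseteq X}\bigl(\delta(x,A)\with\inf_{a\in A}\nu(a)\bigr)$ is the $\delta$-closure and the $\delta$-closed sets are its fixed points. One inequality, $\overline{\nu}^{\,\delta}\le\mathrm{cl}_{\CC}(\nu)$, is again the computation used for $\CC\subseteq\CC_\delta$. The reverse inequality $\mathrm{cl}_{\CC}(\nu)\le\overline{\nu}^{\,\delta}$ asserts that the $\delta$-closure of an arbitrary \emph{fuzzy} set already lies in $\CC$; since $\overline{\nu}^{\,\delta}=\bigvee_{r\in[0,1]}\bigl(r\with\delta(\cdot,\nu^{-1}[r,1])\bigr)$ and $\CC$ is visibly not closed under arbitrary joins, this is \emph{not} a formal consequence of (C1)--(C4): one must show $\CC$ is closed under exactly these ``level-decomposition'' joins of closures of crisp sets. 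This is the main obstacle. I would handle it by reducing along the ordinal-sum decomposition of $\with$ (Theorem \ref{ordinal sum}): on an Archimedean summand isomorphic to the product t-norm this is the classical theory of lower regular functions of approach spaces \cite{Lowen2015}, with the {\L}ukasiewicz and G\"odel parts treated directly; alternatively one simply invokes \cite{Lowen2015,YuZ2022}, where this correspondence is established. Apart from this step, everything above is routine quantale arithmetic.
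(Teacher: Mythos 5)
The paper offers no proof of this proposition at all: it is quoted from \cite{Lowen2015,YuZ2022}, so there is nothing internal to compare your argument against. Everything you actually carry out is correct. The reformulation of ``$\lambda\in\CC_\delta$'' as $\delta(x,A)\with\inf_{a\in A}\lambda(a)\le\lambda(x)$ is right, the level-set argument for (C1) works (using that $[0,1]$ is totally ordered so $\with$ distributes over binary $\vee$), and the verification of (A1)--(A4) for $\delta=\mathrm{cl}_{\CC}(\mathbf 1_{(-)})$, the inclusion $\CC\subseteq\CC_\delta$, and the uniqueness/coarsest-ness arguments are all sound quantale arithmetic; in particular your identity $\mathrm{cl}_{\CC}(p\with\mathbf 1_B)=p\with\mathrm{cl}_{\CC}(\mathbf 1_B)$ via (C2) and (C4) is the right way to get (A4).

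The one genuine gap is exactly the one you flag: the inclusion $\CC_\delta\subseteq\CC$, without which existence fails (the proposition demands $\CC$ be \emph{the} set of closed sets, not merely a subset). Your own diagnosis is accurate --- since $\overline{\nu}^{\,\delta}=\bigvee_{r}\bigl(r\with\delta(\cdot,\nu^{-1}[r,1])\bigr)$ and (C1)--(C4) give no closure under such joins, this is not formal --- but neither of your two exits closes it. Citing \cite{Lowen2015,YuZ2022} for this step is defensible only in the sense that the paper cites the same sources for the entire proposition; it is not a proof. The ordinal-sum route is more problematic than you suggest: a closed set $\lambda$ takes values across all of $[0,1]$, so the condition $\delta(x,A)\with\inf_A\lambda\le\lambda(x)$ does not localize to the summands $[a_i,b_i]$ of Theorem \ref{ordinal sum}, and reassembling the Archimedean pieces (where $\ra$ behaves very differently near the idempotents) is where the real difficulty lives --- note for instance that the naive bound $\delta(x_0,\lambda^{-1}[r,1])\le r\ra\lambda(x_0)$ is useless in the {\L}ukasiewicz case, since $r\ra c\to 1$ as $r\downarrow c$. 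So as a self-contained proof the proposal is incomplete at its acknowledged crux; as a proof at the paper's own level of rigor (cite the literature), it is more than the paper provides.
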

	
	The category $[0,1]\text{-}\mathbf{Top}$  of real-valued topological spaces contains both the category of topological spaces and the category of real-enriched categories  as full subcategories. In other words, real-valued topological spaces are an common extension of topological spaces and real-enriched categories.
	
	For each topological space $X$, the map $$\delta_X\colon X\times 2^X\to [0,1],\quad
	\delta_X(x,A)=
	\begin{cases}
		1 & \text{if $x$ is in the closure of $A$},\\
		0 & \text{otherwise}
	\end{cases}
	$$
	is  a real-valued topology on $X$. Assigning to each real-enriched category $X$ the real-valued topological space  $\omega(X)\coloneqq(X,\delta_X)$ defines a full and faithful functor
	$$\omega\colon \mathbf{Top}\to [0,1]\text{-}\mathbf{Top}.$$ Spaces of the form $\omega(X)$ are said to be  \emph{topologically generated}. A function $\phi\colon X\to [0,1]$ is a closed set of $\omega(X)$ if and only if it is upper semicontinuous  in the usual sense.
	
	The functor $\omega$ embed $\mathbf{Top}$ in $[0,1]\text{-}\mathbf{Top}$ as a both reflective and coreflective full subcategory. For each real-valued  topological space $(X,\delta)$, a subset $A$ of $X$ is closed in its topological coreflection $\iota(X,\delta)$ if $A=\lam^{-1}(1)=\{x\in X\mid \lam(x)=1\}$ for some closed set $\lam$ of $(X,\delta)$; $A$ is closed in its topological reflection $\varrho(X,\delta)$ if the characteristic map $1_A$ is a closed set of $(X,\delta)$.

	For a real-enriched category $(X,\alpha)$, define  $\Gamma(\alpha)\colon X\times 2^X\to[0,1] $ by $$
	\Gamma(\alpha)(x,A)=
	\begin{cases}
		0 & A=\varnothing,\\
		\sup\limits_{y\in A}\alpha(x,y) & A\neq\varnothing.
	\end{cases}
	$$ Then $\Gamma(\alpha)$ is a real-valued topology on $X$. 
	Assigning $(X,\Gamma(\alpha))$ to $(X,\alpha)$ gives rise to a full and faithful functor
	$$\Gamma\colon [0,1]\text{-}\mathbf{Cat}\to [0,1]\text{-}\mathbf{Top}. $$ 
	Spaces of the form $\Gamma(X,\alpha)$ are called \emph{Alexandroff real-valued topological spaces}. A function $\phi\colon X\to [0,1]$ is a closed set of $\Gamma(X,\alpha)$ if and only if $\phi$ is a weight of $(X,\alpha)$.  
	
	The functor $\Gamma$ has a right adjoint   $$\Omega\colon [0,1]\text{-}\mathbf{Top}\to [0,1]\text{-}\mathbf{Cat}$$ which maps   $(X,\delta)$ to    $\Omega(X,\delta)=(X,\Omega(\delta)),$ where  $\Omega(\delta)(x,y)=\delta(x,\{y\}).$ 
	The real-enriched category  $\Omega(X,\delta)$  is called  the \emph{specialization}   of  $(X,\delta)$,  it is obtained by specializing the second argument in $\delta(x,A)$ to singleton sets.

	\begin{proposition}For each real-enriched category $X$, the  topological coreflection of the Alexandroff real-valued topology of $X$ coincides with the open ball topology of $X$. \end{proposition}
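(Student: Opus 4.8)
The plan is to compare the two topologies through their closed sets. Recall that the closed sets of the Alexandroff real-valued topology $\Gamma(X,\alpha)$ are precisely the weights of $X$, and that a subset $A$ is closed in the topological coreflection $\iota(\Gamma(X,\alpha))$ exactly when $A=\phi^{-1}(1)$ for some weight $\phi$ of $X$. So it suffices to prove that the closed sets of the open ball topology of $X$ are exactly the sets of the form $\phi^{-1}(1)$ with $\phi$ a weight of $X$.

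First I would check that $\phi^{-1}(1)$ is open-ball closed for every weight $\phi$, by showing its complement $U=\{x\in X\mid\phi(x)<1\}$ is open. Given $x\in U$, choose $r$ with $\phi(x)<r<1$. For any $y\in B(x,r)$ one has $X(x,y)>r$, so $\phi(y)\with r\le\phi(y)\with X(x,y)\le\phi(x)<r$ by monotonicity of $\with$ and the defining inequality of a weight; since $\phi(y)=1$ would force $\phi(y)\with r=r$, this gives $\phi(y)<1$, that is, $y\in U$. Hence $B(x,r)\subseteq U$, and $\phi^{-1}(1)$ is closed.

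Conversely, let $C$ be closed in the open ball topology. Put $\phi(x)=\sup_{y\in C}X(x,y)$ when $C\neq\varnothing$, and let $\phi$ be the constant weight $0$ when $C=\varnothing$; in the first case $\phi$ is a weight because $\with$ distributes over suprema in the quantale $([0,1],\with,1)$ and the composition law gives $X(x_2,y)\with X(x_1,x_2)\le X(x_1,y)$. Clearly $C\subseteq\phi^{-1}(1)$, since $X(x,x)=1$. For the reverse inclusion, suppose $\phi(x)=1$ but $x\notin C$; then openness of $X\setminus C$ provides a basic ball $B(z,s)$ with $x\in B(z,s)$ and $B(z,s)\cap C=\varnothing$, so $X(x,y)\with X(z,x)\le X(z,y)\le s$ for every $y\in C$. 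Taking the supremum over $y\in C$ and using distributivity of $\with$ over $\sup$ together with $\phi(x)=1$ yields $X(z,x)=1\with X(z,x)=\big(\sup_{y\in C}X(x,y)\big)\with X(z,x)=\sup_{y\in C}\big(X(x,y)\with X(z,x)\big)\le s$, contradicting $x\in B(z,s)$. Hence $C=\phi^{-1}(1)$.

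I expect the last inclusion $\phi^{-1}(1)\subseteq C$ to be the only genuinely delicate point: it is the one place where one must exploit the precise shape of the base of the open ball topology together with the composition law of $X$. It is perhaps worth remarking that nothing above uses the continuity of $\with$ --- only its monotonicity and the fact that $([0,1],\with,1)$ is a quantale. Combining the two inclusions identifies the closed sets, hence the open sets, of the open ball topology with those of $\iota(\Gamma(X,\alpha))$, which is the assertion.
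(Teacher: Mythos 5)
Your proof is correct. The first half (every set of the form $\phi^{-1}(1)$ with $\phi$ a weight is closed in the open ball topology) is essentially the paper's argument, just phrased with $\phi(x)<r$ instead of $r\ra\phi(x)<1$ — these are equivalent. The second half is where you diverge: the paper only needs to show that each \emph{basic} open ball $B(x,r)$ is open in the coreflection, which it does by exhibiting the single weight $X(x,-)\ra r$, whose $1$-level is exactly $X\setminus B(x,r)$; you instead take an arbitrary open-ball-closed set $C$ and show it equals $\phi^{-1}(1)$ for the canonical ``closure weight'' $\phi=\sup_{y\in C}X(-,y)=\Gamma(\alpha)(-,C)$, using the base of open balls and the composition law to rule out points of $\phi^{-1}(1)\setminus C$. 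Both arguments are sound; the paper's is shorter because it exploits that a topology is determined by a base, while yours identifies, for every closed set of the open ball topology, an explicit weight realizing it (namely the $\Gamma(\alpha)$-closure), which is a mildly stronger piece of information. Your observation that only the quantale structure of $([0,1],\with,1)$ is used, and not continuity of $\with$, also applies to the paper's proof.
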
 
	
	\begin{proof}  Assume that $U$ is an open set of $\iota\circ\Gamma (X)$. Then there is a functor $\lam\colon X^{\rm op}\to{\sf V}$ such that $U$ is the complement of $\{z\in X\mid \lam(z)=1\}$. If $x\in U$, then $\lam(x)<1$. Pick some $r<1$ such that $r\ra\lam(x)<1$. Then for all $y\in B(x,r)$, $\lam(y)\leq X(x,y)\ra \lam(x)<1$, which implies that $U$ is open in the open ball topology. Conversely, we show that every open ball $B(x,r)$ is an open set of $\iota\circ\Gamma (X)$. Consider the weight $\phi=X(x,-)\ra r$. It is clear that $\phi(z)=1$ if and only if $z\notin B(x,r)$, hence $B(x,r)$ is open in $\iota\circ\Gamma (X)$. \end{proof}

	Suppose $X$ is a real-enriched category.  By a \emph{Scott closed (fuzzy) set}  of $X$ we mean a functor $\lam\colon X^{\rm op}\to{\sf V}$ (also viewed as a weight of $X$) such that for every ideal $\phi$ of $X$, $$\sub_X(\phi,\lam)\leq \lam(\colim\phi)$$   whenever  $\phi$ has a colimit. The inequality is actually an equality.  It  says, semantically, that if $\phi$ is contained in $\lam$, then its colimit belongs to $\lam$. 
	
	Suppose $\lam\colon X^{\rm op}\to{\sf V}$ is a functor.
	Since for each weight $\phi$ of $X$,  the colimit of the functor $\lam^{\rm op}\colon X\to{\sf V}^{\rm op}$ weighted by $\phi$  is  the limit of $\lam\colon X^{\rm op}\to{\sf V}$ coweighted by the coweight $\phi^{\rm op}$ of $X^{\rm op}$, from Example \ref{sub as limit} it follows that ${\colim}_\phi\lam^{\rm op} =\lim_{\phi^{\rm op}}\lam=\sub_X(\phi,\lam).$ So, a functor $\lam\colon X^{\rm op}\to{\sf V}$  is Scott closed if and only if its opposite $\lam^{\rm op}\colon X\to{\sf V}^{\rm op}$ is Yoneda continuous.
	
	The Scott closed sets of a real-enriched category $X$ satisfy (C1)-(C4) in Proposition \ref{closed sets}, hence determine a real-valued topology, called the \emph{Scott real-valued topology}, on $X$. The resulting real-valued topological space is denoted by $\Sigma(X)$. The  Scott real-valued topology of real-enriched categories is an extension of the Scott approach structure for quasi-metric spaces in \cite{LiZ18b,Windels}.

	\begin{example}Since a functor $\phi\colon  {\sf V}^{\rm op}\to {\sf V}^{\rm op}$   is Yoneda  continuous if and only if it is right continuous, it follows from Example \ref{cotopology of K} that   $\Sigma({\sf V}^{\rm op})$ is the space $([0,1], \delta_\mathbb{K})$ in Example \ref{the space K}.  
	\end{example}
	
	Since every functor preserves  colimits of Cauchy weights, then $\Gamma(X)=\Sigma(X)$ for each Smyth complete real-enriched category $X$. The converse is also true for Yoneda complete real-enriched categories.
	
	\begin{theorem}\label{Smyth via gamma=sigma} A Yoneda complete real-enriched category $X$ is Smyth complete if and only if $\Gamma(X)=\Sigma(X)$. \end{theorem}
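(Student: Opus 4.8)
The plan is to recast the identity $\Gamma(X)=\Sigma(X)$ in terms of closed sets and then feed the resulting information into Theorem~\ref{Smyth via bilimit}. Recall that the closed sets of $\Gamma(X)$ are exactly the weights of $X$, that the closed sets of $\Sigma(X)$ are the Scott closed sets, and that every Scott closed set is in particular a weight; hence $\Gamma(X)=\Sigma(X)$ is equivalent to the single assertion that \emph{every weight of $X$ is Scott closed}. I will also use freely that a Yoneda complete $X$ is separated (a constant net at $x$ forces every element isomorphic to $x$ to coincide with $x$) and that every ideal $\phi$ of $X$ possesses a (necessarily unique) colimit.

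For the implication $\Gamma(X)=\Sigma(X)\Rightarrow X$ Smyth complete, by Theorem~\ref{Smyth via bilimit}$\,(3)\Rightarrow(1)$ it suffices, given separatedness, to show that every ideal $\phi$ is representable. Let $a=\colim\phi$. Since $\phi$ is a weight, the hypothesis makes $\phi$ itself a Scott closed set; applying the defining inequality of a Scott closed set to the ideal $\phi$ we obtain $1=\sub_X(\phi,\phi)\leq\phi(a)$, so $\phi(a)=1$. The weight axiom for $\phi$ then gives $\phi(a)\with X(x,a)\leq\phi(x)$, i.e.\ $X(x,a)\leq\phi(x)$ for every $x$; while the colimit equation, evaluated at $a$, gives $1=X(a,a)=\CP X(\phi,\sy(a))=\sub_X(\phi,X(-,a))=\inf_{y}\big(\phi(y)\ra X(y,a)\big)$, hence $\phi(x)\leq X(x,a)$ for every $x$. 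Therefore $\phi=X(-,a)$ is representable, as required.

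For the converse, assume $X$ Smyth complete; by the first paragraph I only need to check that every weight $\lambda$ is Scott closed. Let $\phi$ be an ideal with colimit $a$. By Theorem~\ref{Smyth via bilimit}$\,(4)$ the ideal $\phi$ is a Cauchy weight, so its colimit is preserved by the functor $\lambda^{\rm op}\colon X\to\sV^{\rm op}$; combining this with the identity $\colim_\phi\lambda^{\rm op}=\sub_X(\phi,\lambda)$ recorded before the statement yields $\lambda(a)=\sub_X(\phi,\lambda)$, which is exactly Scott closedness of $\lambda$. (This is the argument sketched in the paragraph preceding the theorem.)

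The substantive point, and the only genuinely non-routine step, is the trick in the second paragraph: applying the Scott closed condition of $\phi$ to $\phi$ itself to extract $\phi(\colim\phi)=1$; once that is in hand, representability of $\phi$ is a mechanical consequence of the Yoneda lemma, the colimit equation, and the weight axioms. It is worth noting that Yoneda completeness is used essentially in the first direction — it is what supplies the colimit of an arbitrary ideal — whereas the converse needs only Smyth completeness.
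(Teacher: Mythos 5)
Your proof is correct and follows essentially the same route as the paper: the sufficiency direction hinges on the same key observation (testing Scott-closedness of an ideal $\phi$ against $\phi$ itself via $\sub_X(\phi,\phi)=1$ to force $\phi(\colim\phi)=1$ and hence representability, which the paper phrases contrapositively), and the necessity direction is the paper's remark that every functor preserves colimits of Cauchy weights combined with Theorem~\ref{Smyth via bilimit}(4). No gaps.
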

	
	\begin{proof}We only need to check the sufficiency. Suppose on the contrary that $X$ is not Smyth complete, then $X$ has an ideal, say $\phi$, that is not a Cauchy weight, hence not representable  and consequently,  $\phi(\colim\phi)<1$. Since $\sub_X(\phi,\phi)=1$, it follows that $\phi$ is not Scott closed, contradicting that  $\Gamma(X)=\Sigma(X)$. \end{proof}
	
	Yoneda completeness is indispensable in the above proposition.  In the following we  characterize Smyth completeness via  sobriety of the Alexandroff real-valued topology. 
	
	Given a real-valued topological space $(X,\delta)$, we say that a closed set $\phi$ of $(X,\delta)$  is \emph{irreducible} if  the functor $$\sub_X(\phi,-)\colon (\CC_\delta,\sub_X)\to([0,1],\alpha_L)$$ preserves finite  enriched  colimits.
	
	Since  $\CC_\delta$ is closed  in $([0,1],\sub_X)$  under formation of finite (enriched) colimits and arbitrary (enriched) limits, then a closed set  $\phi$ is irreducible if for all  $\lam,\mu \in\CC_\delta$ and   $r\in[0,1]$, \begin{enumerate}[label=\rm(\roman*)]   
		\item $\sub_X(\phi,r\with\lam) =r\with\sub_X(\phi,\lam)$, 
		\item $\sub_X(\phi,\lam\vee\mu)= \sub_X(\phi,\lam)\vee\sub_X(\phi,\mu)$.  \end{enumerate} 
	
	Every irreducible closed $\phi$ is clearly inhabited, i.e., $\bv_{x\in X}\phi(x)=1$.
	
	\begin{definition}  
		A real-valued topological space $(X,\delta)$  is sober if for each   irreducible closed set $\phi$, there is a unique element $x$ of $X$ such that $\phi=\delta(-,\{x\})$. \end{definition} 
	
	The above postulation is a special case of that for sober fuzzy cotopological spaces in \cite{Zhang2018}. When the continuous t-norm is isomorphic to the product t-norm, it  reduces to that of sober approach spaces in \cite{BRC}. 
	
	It is not hard to check that for each topological space $X$, the real-valued topological space $\omega(X)$ is sober if, and only if, $X$ is sober as a topological space. So, the notion of sobriety of real-valued topological spaces extends that of topological spaces.
	
	\begin{theorem}A   real-enriched category  $X$ is Smyth complete if and only if the  real-valued topological space $\Gamma(X)$ is sober.  \end{theorem}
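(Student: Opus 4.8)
The plan is to translate the topological condition ``$\Gamma(X)$ is sober'' into a purely categorical condition on $X$ and then apply Theorem~\ref{Smyth via bilimit}, which already says that $X$ is Smyth complete if and only if $X$ is separated and every ideal of $X$ is representable. Thus it suffices to prove that $\Gamma(X)$ is sober if and only if those two conditions hold.

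The first step is to set up a dictionary between the topological data of $\Gamma(X)$ and the categorical data of $X$. Since a map $\phi\colon X\to[0,1]$ is a closed set of $\Gamma(X)$ exactly when it is a weight of $X$, the real-enriched category $\CC_{\Gamma(\alpha)}$ of closed sets of $\Gamma(X)$, equipped with $\sub_X$, \emph{is} the category $\CP X$. Under this identification the defining clause of an irreducible closed set of $\Gamma(X)$ --- that $\sub_X(\phi,-)\colon(\CC_{\Gamma(\alpha)},\sub_X)\to\sV$ preserves finite enriched colimits --- becomes, verbatim, the defining clause of an ideal of $X$ (Definition~\ref{defn of ideals}); hence the irreducible closed sets of $\Gamma(X)$ are precisely the ideals of $X$. (If one wishes to be fully explicit, by Corollary~\ref{calculation of sup by tensors} both clauses unpack to preservation of tensors together with preservation of finite joins, i.e. to the conditions (I1) and (I2).) Finally, for any $x\in X$ one computes $\Gamma(\alpha)(z,\{x\})=\sup_{y\in\{x\}}\alpha(z,y)=X(z,x)$, i.e. $\Gamma(\alpha)(-,\{x\})=\sy(x)$; so the equation ``$\phi=\delta(-,\{x\})$'' occurring in the definition of sobriety means exactly that the ideal $\phi$ is representable with representing element $x$.

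With the dictionary in place, $\Gamma(X)$ is sober precisely when every ideal of $X$ is representable \emph{and} its representing element is unique. For one direction, suppose $X$ is Smyth complete. By Theorem~\ref{Smyth via bilimit}, $X$ is separated and every ideal is representable; and separatedness together with the Yoneda lemma forces representing elements to be unique, since $\sy(x)=\sy(y)$ yields $x\cong y$ and hence $x=y$. Therefore $\Gamma(X)$ is sober. Conversely, suppose $\Gamma(X)$ is sober. Then every ideal of $X$ is representable. Moreover $X$ is separated: if $X(x,y)=1=X(y,x)$ then $\sy(x)=\sy(y)$, and this representable weight is an ideal, hence an irreducible closed set of $\Gamma(X)$, so the uniqueness clause of sobriety gives $x=y$. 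By Theorem~\ref{Smyth via bilimit}, $X$ is Smyth complete.

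I do not expect a genuine obstacle in this argument: once the three identifications of the second paragraph are in hand, the theorem is a reformulation of Theorem~\ref{Smyth via bilimit}. The one point deserving a careful line or two is the identification of the irreducible closed sets of $\Gamma(X)$ with the ideals of $X$ --- specifically, that ``preserves finite enriched colimits'' imposes the same requirement on $\sub_X(\phi,-)$ whether its domain is read as $\CC_{\Gamma(\alpha)}$ or as $\CP X$. This is immediate because the two are the same cocomplete real-enriched category, and by Corollary~\ref{calculation of sup by tensors} preservation of finite colimits reduces in each case to the identical pair of pointwise equations.
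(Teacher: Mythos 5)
Your proposal is correct and follows essentially the same route as the paper, which proves the theorem by observing that the ideals of $X$ are precisely the irreducible closed sets of $\Gamma(X)$ and then invoking the characterization of Smyth completeness as ``separated and every ideal representable'' from Theorem~\ref{Smyth via bilimit}. You have merely spelled out the details (closed sets of $\Gamma(X)$ are weights, $\Gamma(\alpha)(-,\{x\})=\sy(x)$, and the uniqueness clause of sobriety corresponds to separatedness) that the paper leaves implicit.
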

	
	\begin{proof} The conclusion was first proved in \cite{LiZ18a} for quasi-metric spaces. It follows from the fact that the ideals of  $X$ are precisely the irreducible closed sets of the real-valued topological space $\Gamma(X)$. \end{proof}
	
	\section{\bf Smyth completable real-enriched categories} 
	\label{Smyth completable}
	
	A separated real-enriched category $X$ is \emph{Smyth completable} if there is a Smyth complete real-enriched category and a fully faithful functor $f\colon X\to Y$.

	\begin{theorem}\label{characterizing Smyth completable} For a separated real-enriched category $X$, the following are equivalent:  \begin{enumerate}[label={\rm(\arabic*)}]   
			\item $X$ is Smyth completable.  \item Every ideal of $X$ is a Cauchy weight. \item Every forward Cauchy net of $X$ is a Cauchy net.  \item Every ideal of $\CI X$ is representable, hence $\CI(\CI X)=\CI X$.  
	\end{enumerate} \end{theorem}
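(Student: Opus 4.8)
The plan is to establish the cycle $(1)\Rightarrow(3)\Leftrightarrow(2)$ together with $(3)\Rightarrow(4)\Rightarrow(1)$; every step except $(3)\Rightarrow(4)$ is a short consequence of material already in place. For $(2)\Leftrightarrow(3)$ I would combine Proposition~\ref{characterization of ideal}, which identifies the ideals of $X$ with the weights $\sup_{i\in D}\inf_{j\geq i}X(-,x_i)$ generated by forward Cauchy nets, with Lemma~\ref{Cauchy net implies Cauchy weight}, which says such a net is Cauchy precisely when the weight it generates is Cauchy. For $(1)\Rightarrow(3)$: if $f\colon X\to Y$ is fully faithful with $Y$ Smyth complete and $\{x_i\}$ is forward Cauchy in $X$, then $\{f(x_i)\}$ is forward Cauchy in $Y$, hence Cauchy in $Y$ by Proposition~\ref{FC in Smyth is C}, and full faithfulness pulls $\sup_i\inf_{j,k\geq i}Y(f(x_j),f(x_k))=1$ back to $\sup_i\inf_{j,k\geq i}X(x_j,x_k)=1$. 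For $(4)\Rightarrow(1)$: the category $\CI X$ is separated, being a full subcategory of $\CP X$, so if all its ideals are representable then $\CI X$ is Smyth complete by Theorem~\ref{Smyth via bilimit}; since the Yoneda embedding corestricts to a fully faithful functor $X\to\CI X$ (representable weights are ideals), $X$ is Smyth completable. The trailing $\CI(\CI X)=\CI X$ then follows: $\CI X$ being separated, representability of every ideal makes the Yoneda embedding $\CI X\to\CI(\CI X)$ bijective on objects and fully faithful, hence an isomorphism.

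Everything nontrivial sits in $(3)\Rightarrow(4)$. Under $(3)$ every ideal of $X$ is a Cauchy weight and, conversely, every Cauchy weight is an ideal, so $\CI X=\CC X$ as full subcategories of $\CP X$; moreover $\CI X$ is Yoneda complete by Proposition~\ref{IX is Yoneda complete}, hence Cauchy complete. Given an ideal $\Phi$ of $\CI X$, Proposition~\ref{characterization of ideal} supplies a forward Cauchy net $\{\phi_i\}_{i\in D}$ of $\CI X=\CC X$ generating it; once this net is known to be Cauchy in $\CC X$, Lemma~\ref{Cauchy net implies Cauchy weight} applied inside $\CC X$ makes $\Phi$ a Cauchy weight of $\CI X$, hence representable by Cauchy completeness. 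Thus $(3)\Rightarrow(4)$ reduces to the lemma I would isolate first: \emph{if every forward Cauchy net of $X$ is Cauchy, then every forward Cauchy net of $\CC X$ is Cauchy.}

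For that lemma, let $\{\phi_i\}_{i\in D}$ be forward Cauchy in $\CC X$ and let $\psi_i$ be the left adjoint of the Cauchy weight $\phi_i$, so that $\sup_x\phi_i(x)\with\psi_i(x)=1$, $\psi_i(y)\with\phi_i(x)\leq X(x,y)$ for all $x,y$, and $\CC X(\phi_i,\phi_j)=\sup_x\phi_j(x)\with\psi_i(x)$ by the definition of $\CP X$ together with Lemma~\ref{adjoint_arrow_calculation}. For each $(i,r)\in D\times[0,1)$ pick $z_{i,r}\in X$ with $\phi_i(z_{i,r})\with\psi_i(z_{i,r})>r$. The crux is to show that $\{z_{i,r}\}$, indexed by $D\times[0,1)$ with the product order, is forward Cauchy in $X$: for $(i,r)\leq(j,s)\leq(k,t)$, chaining the weight inequality for $\phi_k$, the adjunction inequality for $\phi_j$, and the formula $\CC X(\phi_j,\phi_k)=\sup_x\phi_k(x)\with\psi_j(x)$ yields $\phi_k(z_{j,s})\geq\CC X(\phi_j,\phi_k)\with s$, whence $X(z_{j,s},z_{k,t})\geq\psi_k(z_{k,t})\with\phi_k(z_{j,s})\geq r\with\CC X(\phi_j,\phi_k)\with r$; pushing $r$ toward $1$ and choosing $i$ far enough out that $\inf_{k\geq j\geq i}\CC X(\phi_j,\phi_k)$ is near $1$ drives the relevant supremum to $1$. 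Now $(3)$ makes $\{z_{i,r}\}$ Cauchy in $X$, so for each $\varepsilon>0$ there is a pair $(i_0,r_0)$ — and, the condition persisting if $r_0$ is replaced by any larger value in $[0,1)$, one with $r_0$ as close to $1$ as desired — such that $X(z_{j,s},z_{k,t})>1-\varepsilon$ whenever $(j,s),(k,t)\geq(i_0,r_0)$. Taking $s=t=r_0$ and now using the formula for $\CC X(\phi_j,\phi_k)$, the weight inequality for $\phi_k$, and this Cauchy estimate gives $\CC X(\phi_j,\phi_k)\geq\phi_k(z_{j,r_0})\with\psi_j(z_{j,r_0})>r_0\with(1-\varepsilon)\with r_0$ for all $j,k\geq i_0$ (with no ordering between $j$ and $k$ needed); letting $r_0\to1$ and then $\varepsilon\to0$ gives $\sup_i\inf_{j,k\geq i}\CC X(\phi_j,\phi_k)=1$, so $\{\phi_i\}$ is Cauchy.

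The main obstacle is exactly this lemma: it is not formal. The triangle inequality of $\CC X$ controls only the ``ordered'' values $\CC X(\phi_j,\phi_k)$ with $j\leq k$, and there is no way to recover the missing ``backward'' control without exploiting that each $\phi_i$ is Cauchy. The device above — extracting approximate centres of the $\phi_i$ to manufacture a single forward Cauchy net of $X$, applying $(3)$ there, and then transporting the conclusion back to $\{\phi_i\}$ — is what closes the gap, and the delicate point is interleaving the quantifiers over $i$, $r$ and $\varepsilon$ correctly.
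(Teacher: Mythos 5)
Your proposal is correct, and the overall cycle matches the paper's ($(2)\Leftrightarrow(3)$ via Proposition~\ref{characterization of ideal} and Lemma~\ref{Cauchy net implies Cauchy weight}; $(4)\Rightarrow(1)$ via Smyth completeness of $\CI X$), but you handle the two substantive implications by genuinely different means. For $(1)\Rightarrow(2)/(3)$ the paper stays inside the distributor calculus: given fully faithful $f\colon X\to Y$ and an ideal $\phi$ of $X$, it pushes $\phi$ forward to the Cauchy weight $\phi\circ f^*$ of $Y$ and pulls the left adjoint back along $f^*$, using $f^*\circ f_*=X$; your pullback of the Cauchy condition on the net $\{f(x_i)\}$ through full faithfulness is a more elementary route to the same place. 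The real divergence is in $(2)/(3)\Rightarrow(4)$. The paper disposes of it in three lines by invoking Lemma~\ref{X is isomorphic to CX}: since $\sy_*\colon X\oto\CC X$ is an isomorphism in $[0,1]$-$\mathbf{Dist}$, an ideal $\Phi$ of $\CC X=\CI X$ transports to the ideal $\Phi\circ\sy_*$ of $X$, which is Cauchy by hypothesis, whence $\Phi$ is Cauchy and then representable by Cauchy completeness of $\CC X$. You instead prove by hand the lemma that forward Cauchy nets of $\CC X$ are Cauchy, by extracting approximate centres $z_{i,r}$ of the weights $\phi_i$, verifying that $\{z_{i,r}\}_{(i,r)\in D\times[0,1)}$ is forward Cauchy in $X$, applying the hypothesis there, and transporting the estimate back via $\CC X(\phi_j,\phi_k)=\sup_x\phi_k(x)\with\psi_j(x)$. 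I checked the quantifier bookkeeping (directedness of $D\times[0,1)$, monotonicity of the choice of $(i_0,r_0)$, continuity of $\with$ at $1$) and it goes through. The trade-off: the paper's argument is shorter and conceptually cleaner but leans on the Morita-type invariance of ideals under the distributor isomorphism $X\cong\CC X$ (which itself needs a small justification that composing an ideal with $\sy_*$ yields an ideal); yours is longer and requires careful interleaving of $i$, $r$ and $\varepsilon$, but is self-contained at the level of nets and makes the equivalence $(3)\Leftrightarrow(4)$ transparent without appeal to Lemma~\ref{X is isomorphic to CX}. No gaps.
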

	To prove the theorem we need a lemma which says that each real-enriched category is isomorphic to its Cauchy completion in the category of distributors. 
	\begin{lemma} \label{X is isomorphic to CX} {\rm (\cite[Proposition 7.11]{Stubbe2005})} Suppose  $X$ is a real-enriched category and   $\sy\colon X\to\CC X$ is the Yoneda embedding with codomain restricted to the set $\CC X$ of Cauchy weights. Then, $\sy_*\circ\sy^*=\CC X$ and $\sy^*\circ \sy_*=X$. Therefore, $X$ and $\CC X$ are isomorphic in the category of distributors. 
	\end{lemma}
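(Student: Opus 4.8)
The statement is classical---it is \cite[Proposition 7.11]{Stubbe2005}---and the plan is to verify the two displayed distributor identities by a short direct computation and then read off the isomorphism. Throughout, recall that $\CC X$ carries the restriction of the structure of $\CP X$, so $\CC X(\phi_1,\phi_2)=\CP X(\phi_1,\phi_2)=\sub_X(\phi_1,\phi_2)$ for Cauchy weights $\phi_1,\phi_2$.

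First I would dispatch $\sy^*\circ\sy_*=X$. Every representable weight $X(-,a)=\sy(a)$ is right adjoint to the coweight $X(a,-)$, hence is a Cauchy weight; so the corestriction $\sy\colon X\to\CC X$ is well defined, and being a corestriction of the fully faithful Yoneda embedding to a full subcategory of $\CP X$, it is itself fully faithful. By the characterization recorded earlier---a functor $f$ is fully faithful iff $f^*\circ f_*$ is the identity distributor---this gives $\sy^*\circ\sy_*=X$ immediately.

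The substance is the identity $\sy_*\circ\sy^*=\CC X$. I would first compute graph and cograph pointwise. By the Yoneda lemma, $\sy_*(x,\phi)=\CC X(\sy(x),\phi)=\CP X(\sy(x),\phi)=\phi(x)$. For the cograph, $\sy^*(\phi,x)=\CC X(\phi,\sy(x))=\sy(x)\swarrow\phi=X(-,x)\swarrow\phi$; writing $\psi$ for the left adjoint coweight of the Cauchy weight $\phi$, Lemma \ref{adjoint_arrow_calculation}(i) applied to $\psi\dashv\phi$ (or else a two-line check using the adjunction inequalities) identifies $X(-,x)\swarrow\phi$ with $\psi(x)$, i.e. $\sy^*(\phi,-)$ is precisely the left adjoint of $\phi$. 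Hence, by the composition law in $[0,1]\text{-}\mathbf{Dist}$, for Cauchy weights $\phi_1,\phi_2$ we get $(\sy_*\circ\sy^*)(\phi_1,\phi_2)=\sup_{x\in X}\big(\phi_2(x)\with\psi_1(x)\big)$, where $\psi_1$ is the left adjoint of $\phi_1$. It then remains to check $\sup_{x}\big(\phi_2(x)\with\psi_1(x)\big)=\sub_X(\phi_1,\phi_2)$. For $\le$, use the adjunction inequality $\psi_1(x)\with\phi_1(y)\le X(y,x)$ and the weight axiom for $\phi_2$: for all $x,y$, $\phi_2(x)\with\psi_1(x)\with\phi_1(y)\le\phi_2(x)\with X(y,x)\le\phi_2(y)$, so $\phi_2(x)\with\psi_1(x)\le\phi_1(y)\ra\phi_2(y)$, and taking the infimum over $y$ gives $\le\sub_X(\phi_1,\phi_2)$. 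For $\ge$, use the other adjunction inequality $\sup_x\big(\phi_1(x)\with\psi_1(x)\big)=1$ together with $\sub_X(\phi_1,\phi_2)\with\phi_1(x)\le\phi_2(x)$ and the fact that $\with$ preserves suprema: $\sub_X(\phi_1,\phi_2)=\sub_X(\phi_1,\phi_2)\with\sup_x\big(\phi_1(x)\with\psi_1(x)\big)=\sup_x\big(\sub_X(\phi_1,\phi_2)\with\phi_1(x)\with\psi_1(x)\big)\le\sup_x\big(\phi_2(x)\with\psi_1(x)\big)$. This proves $\sy_*\circ\sy^*=\CC X$.

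With both identities in hand, $\sy_*$ and $\sy^*$ are mutually inverse morphisms of $[0,1]\text{-}\mathbf{Dist}$, so $X$ and $\CC X$ are isomorphic there. The only point requiring a moment's thought is the identification $\sy^*(\phi,-)=\psi$ of the cograph with the left adjoint coweight of $\phi$; once that is in place, the equality $\sup_x\big(\phi_2(x)\with\psi_1(x)\big)=\sub_X(\phi_1,\phi_2)$ is just the two defining inequalities of a right adjoint distributor, each used once.
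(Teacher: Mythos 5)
Your proof is correct and complete. The paper itself gives no argument for this lemma---it is simply cited from Stubbe's Proposition 7.11---so your direct computation is a valid self-contained substitute, and it is essentially the standard one: the identity $\sy^*\circ\sy_*=X$ from full faithfulness, the identification of $\sy^*(\phi,-)$ with the left adjoint coweight $\psi$ via Lemma \ref{adjoint_arrow_calculation}, and the two adjunction inequalities $\psi_1\circ\phi_1\le X$ and $\phi_1\circ\psi_1\ge 1$ used once each to get $\sup_x\phi_2(x)\with\psi_1(x)=\sub_X(\phi_1,\phi_2)$. All the individual steps (Yoneda for the graph, the composition law, residuation, and distributivity of $\with$ over suprema) check out.
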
    
	
	%\begin{proof} The equality $\sy^*\circ \sy_*=X$ follows from that $\sy   \colon X\to \CC X$ is fully faithful. It remains to check the equality $\sy_*\circ\sy^*=\CC X$. For each $\phi\in\CC X$,   the coweight $X\swarrow \phi$ is left adjoint to $\phi$ by Lemma \ref{adjoint_arrow_calculation}. Hence, for all $\phi,\psi\in\CC X$, \begin{align*}\sy_*\circ\sy^*(\phi,\psi)&= \sup_{x\in X} \sy_*(x,\psi)\with\sy^*(\phi,x) \\ &= \sup_{x\in X}\CC X(\sy_X  (x),\psi)\with\CC X(\phi,\sy  (x))\\ &=\sup_{x\in X}\psi(x)\with(X(-,x)\swarrow \phi) \\ &= \sup_{x\in X}\psi(x)\with(X\swarrow\phi)(x)   \\  &= \psi\circ(X\swarrow\phi)  \\ &= \psi\swarrow\phi \\ & =\CC X(\phi,\psi).  \qedhere\end{align*} \end{proof}  
	
	\begin{proof}[Proof of Theorem \ref{characterizing Smyth completable}] The equivalence $(2)\Leftrightarrow(3)$ follows from Proposition \ref{characterization of ideal} and Lemma \ref{Cauchy net implies Cauchy weight}.
		
		$(1)\Rightarrow(2)$ Suppose  $f\colon X\to Y$ is fully faithful with $Y$ being Smyth complete. Let $\phi$ be an ideal of $X$. Then $\phi\circ f^*$ is an ideal, hence a Cauchy weight of $Y$. Let $\psi\colon\star\oto Y$ be a left adjoint of $\phi\circ f^*\colon Y\oto\star$. We  show that $f^*\circ\psi\colon \star\oto X$ is a left adjoint of $\phi\colon X\oto\star$. First, since $\psi\dashv \phi\circ f^*$, then $\phi\circ(f^*\circ\psi) =(\phi\circ f^*)\circ\psi\geq1$. Next, \begin{align*}\psi\dashv \phi\circ f^* &\implies \psi\circ\phi\circ f^*\leq Y \\ &\implies \psi\circ\phi\leq f_* &(f^*\circ f_*=X) \\ &\implies (f^*\circ\psi)\circ\phi\leq X. &(f^*\circ f_*=X)\end{align*} Therefore, $f^*\circ\psi$ is a left adjoint of $\phi$.
		
		$(2)\Rightarrow(4)$ Since $\CI X=\CC X$ by assumption,  we only need to check that every ideal of $\CC X$ is representable. Suppose that $\phi\colon\CC X\oto\star$ is an ideal of $\CC X$. By Lemma \ref{X is isomorphic to CX}, $\sy_*\colon X\oto\CC X$ is an isomorphism in the category of distributors, it follows that  $\phi\circ \sy_*\colon X\oto\CC X\oto\star$ is an ideal, hence a Cauchy weight of $X$, then $\phi$ is a Cauchy weight of $\CC X$ because $\sy_*$ is an isomorphism. Therefore, $\phi$ is representable since $\CC X$ is Cauchy complete.

		$(4)\Rightarrow(1)$ Since every ideal of $\CI X$ is representable, then $\CI X$ is Smyth complete and consequently, $X$ is Smyth completable. \end{proof}
	
	\begin{corollary}A separated real-enriched category  is Smyth completable if and only if its Cauchy completion is Smyth complete. \end{corollary}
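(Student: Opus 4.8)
The plan is to read the corollary off Theorem~\ref{characterizing Smyth completable}, together with two standard facts about the Cauchy completion $\CC X$ recalled in the excerpt: it is separated and Cauchy complete (being the free Cauchy completion of $X$), and the corestricted Yoneda embedding $\sy\colon X\to\CC X$ is fully faithful, since $\sy^*\circ\sy_*=X$ by Lemma~\ref{X is isomorphic to CX}.

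The ``if'' direction will be immediate: if $\CC X$ is Smyth complete, then $\sy\colon X\to\CC X$ is a fully faithful functor into a Smyth complete real-enriched category and $X$ is separated by hypothesis, so $X$ is Smyth completable by definition. For the ``only if'' direction I would argue as follows. Assume $X$ is Smyth completable. By the equivalence $(1)\Leftrightarrow(2)$ of Theorem~\ref{characterizing Smyth completable}, every ideal of $X$ is a Cauchy weight, whence $\CI X=\CC X$ as subcategories of $\CP X$; and by $(1)\Leftrightarrow(4)$ of the same theorem, every ideal of $\CI X$ is representable. Combining these, every ideal of $\CC X$ is representable, and since $\CC X$ is separated, condition $(3)$ of Theorem~\ref{Smyth via bilimit} yields that $\CC X$ is Smyth complete. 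An alternative route, avoiding the identification $\CI X=\CC X$, is to transport ideals and Cauchy weights along the distributor isomorphism $\sy_*\colon X\oto\CC X$ of Lemma~\ref{X is isomorphic to CX}: this shows that every ideal of $\CC X$ is a Cauchy weight precisely when every ideal of $X$ is, and then condition $(4)$ of Theorem~\ref{Smyth via bilimit}, applied to the Cauchy complete category $\CC X$, finishes the proof.

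I do not expect a genuine obstacle here, as the argument is a short assembly of previously established equivalences. The one point I would check carefully is that ``being an ideal'' and ``being a Cauchy weight'' are properties intrinsic to a weight, so that the hypothesis really forces $\CI X$ and $\CC X$ to coincide as sets of weights rather than merely up to equivalence; this is already implicit in the way these two classes are defined in the excerpt, but it is worth making explicit in the write-up.
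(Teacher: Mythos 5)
Your proposal is correct, and the sufficiency direction coincides with the paper's ("sufficiency is clear", via the fully faithful corestricted Yoneda embedding $\sy\colon X\to\CC X$). For necessity, however, you take a genuinely different route. The paper starts from an arbitrary fully faithful $f\colon X\to Y$ with $Y$ Smyth complete, extends it along the universal property of the Cauchy completion to $\overline{f}\colon\CC X\to Y$, checks that $\overline{f}$ is again fully faithful (a step the paper leaves to the reader), concludes that $\CC X$ is Smyth completable, and then applies the implication $(1)\Rightarrow(2)$ of Theorem~\ref{characterizing Smyth completable} \emph{to $\CC X$} together with Cauchy completeness of $\CC X$. You instead apply $(1)\Rightarrow(2)$ and $(1)\Rightarrow(4)$ of that theorem \emph{to $X$}: condition (2) plus the fact that every Cauchy weight is an ideal gives $\CI X=\CC X$ on the nose, condition (4) then says every ideal of $\CC X$ is representable, and Theorem~\ref{Smyth via bilimit}(3) (with separatedness of $\CC X\subseteq\CP X$) finishes. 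Your argument buys a cleaner write-up: it avoids the extension-and-full-faithfulness verification entirely. The cost is hidden rather than removed, since the paper's proof of $(2)\Rightarrow(4)$ itself transports ideals along the distributor isomorphism $\sy_*\colon X\oto\CC X$ of Lemma~\ref{X is isomorphic to CX} --- which is exactly your suggested alternative route --- so the two proofs ultimately rest on the same machinery, packaged at different points. Your closing caution that $\CI X$ and $\CC X$ must coincide as sets of weights (not merely up to equivalence) is well placed and is indeed what the strict inclusions $\CC X\subseteq\CI X\subseteq\CP X$ deliver.
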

	\begin{proof} Sufficiency is clear, for necessity assume that $f\colon X\to Y$ is a fully faithful functor with $Y$ Smyth complete. Since $Y$ is Cauchy complete, there is a  unique functor  $\overline{f}\colon\CC X\to Y$ that extends $f$. It is not hard to check that $\overline{f}$ is also fully faithful, hence $\CC X$ is Smyth completable. By Theorem \ref{characterizing Smyth completable} every ideal of $\CC X$ is Cauchy, hence representable by Cauchy completeness of $\CC X$, and consequently, $\CC X$ is Smyth complete. \end{proof}
	
	In the following we show that every Smyth completable real-enriched category is continuous (to be defined below). 
	For each real-enriched category $X$, let $$\CJ X=\{\phi\in\CI X\mid \phi ~\text{has a colimit}\}.$$ It is clear that every forward Cauchy net of $X$ has a Yoneda limit if and only if $\CJ X=\CI X$.
	
	The Yoneda embedding $\sy\colon X\to\CP X$ factors through $\CJ X$, so we'll also use the symbol $\sy$ to denote the functor $X\to\CJ X$ obtained by restricting the codomain of the Yoneda embedding to $\CJ X$. It is clear that $\sy\colon X\to\CJ X$ has a left adjoint $\colim\colon \CJ X\to X$ that sends each $\phi\in\CJ X$ to its colimit.
	
	\begin{definition}Suppose $X$ is a real-enriched category. If   $\colim\colon \CJ X\to X$ has a left adjoint, then we say that $X$  is continuous. In other words, $X$  is continuous if there exists a string of adjunctions  \[\wayb\dashv\colim \dashv \sy\colon X\to\CJ X.\]    A Yoneda comlete and continuous real-enriched category is called a real-enriched domain. \end{definition}
	
	\begin{example}The real-enriched category $\sV^{\rm op}$ is always a real-enriched domain  \cite[Corollary 4.13]{LaiZ2020};  the real-enriched category $\sV$ is a real-enriched domain if and only if the implication operator $\ra\colon[0,1] \times[0,1]\to[0,1]$ is continuous at each point off the diagonal  \cite[Theorem 6.4]{LaiZ2020}. \end{example}
	
	\begin{proposition}A real-enriched category $X$ is a real-enriched domain if and only if it is continuous and the  real-valued topological space $\Sigma(X)$ is sober.  \end{proposition}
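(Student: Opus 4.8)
The plan is to recast sobriety of $\Sigma(X)$ as a statement about ideals, using two preliminary facts. First, every representable weight $X(-,a)$ is Scott closed, since for an ideal $\psi$ with a colimit one has $\sub_X(\psi,X(-,a))=\CP X(\psi,\sy(a))=X(\colim\psi,a)=X(-,a)(\colim\psi)$ by the definition of $\colim\psi$; moreover every Scott closed set $\lam$, being a weight, satisfies $\lam(a)\ra\lam(x)\geq X(x,a)$. Hence the closure of a singleton $\{a\}$ in $\Sigma(X)$, which is $\bw_{\lam\in\CC_\Sigma}\big(\lam(a)\ra\lam(-)\big)$, equals $X(-,a)$; equivalently $\Omega\Sigma(X)=X$. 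Consequently $\Sigma(X)$ is sober if and only if every irreducible closed set of $\Sigma(X)$ is representable, the uniqueness clause in the definition of sobriety being equivalent to separatedness of $X$. Second, $\CC_\Sigma$ is a reflective subcategory of $\CP X$: the reflection of a weight $\lam$ is $R\lam:=\bw\{\mu\in\CC_\Sigma\mid\mu\geq\lam\}$, which lies in $\CC_\Sigma$ by (C3), dominates $\lam$, and satisfies $\sub_X(R\lam,\nu)=\sub_X(\lam,\nu)$ for all $\nu\in\CC_\Sigma$ (for the nontrivial inequality note that $r:=\sub_X(\lam,\nu)$ gives $\lam\leq r\ra\nu\in\CC_\Sigma$ by (C4), so $R\lam\leq r\ra\nu$).

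For the direction ``$X$ continuous and $\Sigma(X)$ sober $\Rightarrow$ real-enriched domain'' it suffices to prove that $X$ is Yoneda complete, i.e.\ that every ideal $\phi$ of $X$ has a colimit. Its Scott closure $R\phi$ is again an irreducible closed set of $\Sigma(X)$: since $\CC_\Sigma$ is closed in $\CP X$ under tensors and finite joins (by (C1)--(C2)), the reflection identity shows that $\sub_X(R\phi,-)\colon\CC_\Sigma\to\sV$ is the restriction of $\sub_X(\phi,-)\colon\CP X\to\sV$, which preserves finite colimits because $\phi$ is an ideal. By sobriety $R\phi=X(-,a)$ for a unique $a$, and then the reflection identity together with the Yoneda lemma give $X(a,x)=\sub_X(X(-,a),X(-,x))=\sub_X(\phi,X(-,x))=\CP X(\phi,\sy(x))$ for every $x$, i.e.\ $a=\colim\phi$. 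Sobriety also forces $X$ to be separated, so the colimit is unique; hence $X$ is Yoneda complete, and being continuous by hypothesis, a real-enriched domain.

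Conversely, suppose $X$ is a real-enriched domain; continuity is part of the hypothesis, so only sobriety of $\Sigma(X)$ needs proof. Let $\phi$ be an irreducible closed set of $\Sigma(X)$. Once we know $\phi$ is an ideal of $X$, Yoneda completeness yields $a:=\colim\phi$; since $\phi$ is Scott closed, $\phi(a)=\sub_X(\phi,\phi)=1$, hence $X(-,a)\leq\phi$; and the defining property of $\colim\phi$ gives $\phi\leq X(-,a)$. Thus $\phi=X(-,a)$ is representable, as required. It therefore remains to show that every irreducible closed set of $\Sigma(X)$ is an ideal, equivalently, by the characterisation of ideals via formal balls in Proposition \ref{characterization of  ideal}, that $\mathrm{B}\phi$ is directed. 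Here continuity enters: given $(x,r),(y,s)\in\mathrm{B}\phi$, one uses the way-below ideals $\wayb x$ and $\wayb y$ (for which $\colim\wayb x=x$, $\colim\wayb y=y$, and $\wayb$, being a left adjoint, preserves colimits) to approximate $x$ and $y$, builds from them Scott closed weights playing the role of the weights $X(x,-)\ra r$ and $X(y,-)\ra s$ in the proof of $(1)\Rightarrow(2)$ of Proposition \ref{characterization of  ideal}, applies the irreducibility of $\phi$ inside $\Sigma(X)$ to them, and extracts an upper bound of $(x,r)$ and $(y,s)$ in $\mathrm{B}\phi$.

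The main obstacle is precisely this last step --- that an irreducible closed set of $\Sigma(X)$ is an ideal of $X$ --- and making the way-below approximation above work out is the delicate point. It is here, and only here, that continuity of $X$ (rather than mere Yoneda completeness) is essential: classically this is the theorem that the Scott topology of a continuous dcpo is sober, and Johnstone's example of a dcpo with non-sober Scott topology shows that no argument using only Yoneda completeness can suffice.
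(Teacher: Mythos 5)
The paper disposes of both directions by citation: sufficiency is \cite[Proposition 3.10]{Zhang2018} (the specialization of a sober real-valued topological space is Yoneda complete) and necessity is \cite[Proposition 6.9]{YuZ2022} (the Scott real-valued topology of a real-enriched domain is sober). You instead attempt a self-contained proof. Your sufficiency half is correct and complete, and is a nice explicit instantiation of the cited fact: the observations that representables are Scott closed, that $\Omega\Sigma(X)=X$, that $\CC_\Sigma$ is reflective in $\CP X$ with $\sub_X(R\lam,\nu)=\sub_X(\lam,\nu)$ for Scott closed $\nu$, and that the Scott closure of an ideal is therefore an irreducible closed set, together yield Yoneda completeness from sobriety. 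All the individual steps check out against the paper's definitions.

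The necessity half, however, has a genuine gap. Everything reduces, as you say, to showing that every irreducible closed set $\phi$ of $\Sigma(X)$ is an ideal of $X$ (equivalently, that $\mathrm{B}\phi$ is directed), and at exactly this point you only gesture: ``one uses the way-below ideals $\wayb x$ and $\wayb y$ \dots builds from them Scott closed weights playing the role of $X(x,-)\ra r$ and $X(y,-)\ra s$ \dots and extracts an upper bound.'' This is not a proof. The weights $X(x,-)\ra r$ used in Proposition \ref{characterization of  ideal} are closed in $\Gamma(X)$ but not in general Scott closed (classically, the complement of $\mathord{\uparrow}x$ need not be Scott closed), so they cannot be fed to the irreducibility of $\phi$ relative to $\CC_\Sigma$; one must replace them by weights of the form $\mathfrak{w}(x,-)\ra r$ built from the way-below distributor, verify that \emph{these} are Scott closed (which requires the interpolation property $\mathfrak{w}\circ\mathfrak{w}=\mathfrak{w}$, proved later in the paper), and then run the directedness argument with the extra bookkeeping this entails. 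None of that is carried out, and your own closing paragraph concedes that this is ``the delicate point.'' Since this step is precisely the content of the cited \cite[Proposition 6.9]{YuZ2022} and is where continuity is genuinely used, the necessity direction cannot be accepted as proved.
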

	
	\begin{proof}
		Sufficiency follows from the fact that the specialization of a sober real-valued topological space $(X,\delta)$ is Yoneda complete \cite[Proposition 3.10]{Zhang2018}. Necessity follows from the fact that the Scott real-valued topology of each real-enriched domain is sober \cite[Proposition 6.9]{YuZ2022}.\end{proof}
	
	The \emph{way below distributor}  on a real-enriched category $X$ is  the distributor $\mathfrak{w}\colon X\oto X$ given by \[\mathfrak{w}(y,x)=\inf_{\phi\in\CJ X}(X(x,\colim\phi)\ra\phi(y))\] for all $x,y\in X$.  It is obvious that $\mathfrak{w}(y,x)\leq X(y,x)$.
	
	\begin{proposition}\label{way below as left adjoint} A real-enriched category  $X$ is continuous   if and only if for all $x\in X$, the weight $\mathfrak{w} (-,x)$ belongs to $\CJ X$ with $x$ being a  colimit. In this case,    $\wayb x=\mathfrak{w} (-,x)$. 
	\end{proposition}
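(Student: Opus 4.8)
The plan is to run the classical ``continuous domain'' argument inside the quantaloid-enriched setting, pivoting on the adjunction $\colim\dashv\sy\colon X\to\CJ X$ that is available for every real-enriched category. I will use the defining hom-equations of enriched adjunctions throughout: $\colim\dashv\sy$ gives $X(\colim\mu,z)=\CJ X(\mu,\sy z)$ for $\mu\in\CJ X$, and, when $X$ is continuous, $\wayb\dashv\colim$ gives $\CJ X(\wayb x,\phi)=X(x,\colim\phi)$ for $\phi\in\CJ X$. Two auxiliary observations will be invoked repeatedly. First, $\colim(\sy z)\cong z$, since $X(\colim(\sy z),w)=\CP X(\sy z,\sy w)=X(z,w)$ by the Yoneda lemma. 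Second, the ``unit inequality'' $\phi(y)\leq X(y,\colim\phi)$ holds for every $\phi\in\CJ X$, because $1=X(\colim\phi,\colim\phi)=\CP X(\phi,\sy(\colim\phi))=\inf_{y}(\phi(y)\ra X(y,\colim\phi))$.

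For necessity I would assume $X$ is continuous and first check $\colim(\wayb x)\cong x$ via $X(\colim(\wayb x),z)=\CJ X(\wayb x,\sy z)=X(x,\colim(\sy z))=X(x,z)$ for all $z$. Then I would identify $\mathfrak{w}(-,x)$ with $\wayb x$ by two inequalities: $\mathfrak{w}(y,x)\leq(\wayb x)(y)$ follows from the definition of $\mathfrak{w}$ by keeping only the term $\phi=\wayb x$ together with $X(x,\colim(\wayb x))=1$; and $(\wayb x)(y)\leq\mathfrak{w}(y,x)$ reduces to $(\wayb x)(y)\with X(x,\colim\phi)\leq\phi(y)$ for each $\phi\in\CJ X$, which holds because $X(x,\colim\phi)=\CJ X(\wayb x,\phi)=\sub_X(\wayb x,\phi)\leq(\wayb x)(y)\ra\phi(y)$. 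This shows $\mathfrak{w}(-,x)=\wayb x\in\CJ X$; and $x$ is a colimit of $\mathfrak{w}(-,x)$ since $\CP X(\wayb x,\sy z)=\CJ X(\wayb x,\sy z)=X(x,\colim(\sy z))=X(x,z)$.

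For sufficiency I would assume $\mathfrak{w}(-,x)\in\CJ X$ with colimit $x$ for every $x$, define $\wayb\colon X\to\CJ X$ on objects by $\wayb x=\mathfrak{w}(-,x)$, and prove the single equation $\CJ X(\wayb x,\phi)=X(x,\colim\phi)$ for all $x\in X$ and $\phi\in\CJ X$. Granting this, putting $\phi=\wayb x'$ gives $\CJ X(\wayb x,\wayb x')=X(x,\colim(\wayb x'))=X(x,x')$, so the object-assignment $\wayb$ is automatically a (fully faithful) functor, and the equation is precisely the statement $\wayb\dashv\colim$, yielding the string $\wayb\dashv\colim\dashv\sy$ and hence continuity of $X$. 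To prove the equation, write $\CJ X(\wayb x,\phi)=\inf_y(\mathfrak{w}(y,x)\ra\phi(y))$; the inequality ``$\geq$'' is immediate from $\mathfrak{w}(y,x)\with X(x,\colim\phi)\leq\phi(y)$, which is part of the definition of $\mathfrak{w}$, while for ``$\leq$'' I would use that $x$ is a colimit of $\wayb x$ to get $X(x,\colim\phi)=\sub_X(\wayb x,X(-,\colim\phi))=\inf_y(\mathfrak{w}(y,x)\ra X(y,\colim\phi))$ and then apply the unit inequality $\phi(y)\leq X(y,\colim\phi)$ term by term.

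The concluding assertion $\wayb x=\mathfrak{w}(-,x)$ is then delivered by the necessity computation, or simply by uniqueness of left adjoints ($\CJ X$ being separated as a subcategory of $\CP X$). I do not expect a serious obstacle: the whole argument is routine calculus of distributors and Yoneda. The two points that need a little care are the remark that a pointwise-defined candidate left adjoint is automatically enriched-functorial once the hom-equation is in hand (settled by the one-line computation above), and keeping the bookkeeping of ``up to isomorphism'' versus ``equality'' straight when $X$ is not assumed separated, which is harmless because ``$x$ is a colimit of $\mathfrak{w}(-,x)$'' is itself an isomorphism-invariant condition.
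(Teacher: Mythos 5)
Your proposal is correct and follows essentially the same route as the paper: for necessity you prove $\mathfrak{w}(-,x)=\wayb x$ by the same two inequalities (specializing the infimum defining $\mathfrak{w}$ to $\phi=\wayb x$, and using $X(x,\colim\phi)=\CJ X(\wayb x,\phi)$), and for sufficiency you establish the adjunction equation $\CJ X(\wayb x,\phi)=X(x,\colim\phi)$, differing only in that you derive the ``$\leq$'' half by expanding $X(x,\colim\phi)$ through $x=\colim\mathfrak{w}(-,x)$ and the unit inequality, where the paper invokes functoriality of $\colim$ directly. Your extra care about the enriched functoriality of the pointwise-defined $\wayb$ is a harmless (and slightly more thorough) addition.
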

	
	\begin{proof} We prove the necessity first. Suppose $X$ is continuous. We show that  for each $x\in X$, $\mathfrak{w} (-,x)=\wayb x$,   hence $\mathfrak{w} (-,x)$ belongs to $\CJ X$ and has $x$ as  colimit.   Since $\colim\wayb x=x$, then $$\mathfrak{w} (-,x)\leq X(x,\colim\wayb x)\ra\wayb x= \wayb x.$$ On the other hand, since  for all   $\phi\in\CJ X$, we have $$X(x,\colim\phi)\with\wayb x = \CJ X(\wayb x,\phi)\with\wayb x\leq \phi,$$  then $\wayb x\leq X(x,\colim\phi)\ra\phi$ for all $\phi\in\CJ X$, hence $\wayb x\leq \mathfrak{w} (-,x)$.  
		
		For sufficiency we show that the assignment $x\mapsto \wayb x\coloneqq \mathfrak{w} (-,x)$ defines a  left  adjoint of $\colim\colon\CJ X\to X$. Let $x\in X$ and $\phi\in\CJ X$. By definition   we have $\wayb x\leq   X(x,\colim\phi)\ra\phi$, hence $$X(x,\colim\phi)\leq \CJ X(\wayb x,\phi).$$  Since taking colimit is functorial $\CJ X\to X$ and $\colim\wayb x=x$, then $$\CJ X(\wayb x,\phi)\leq X(\colim\wayb x,\colim\phi)=X(x,\colim\phi).$$ Therefore, $\wayb$ is a left adjoint of $\colim\colon\CJ X\to X$ and consequently, $X$ is continuous. \end{proof} 
	
	\begin{proposition}The way-below distributor $\mathfrak{w}$ on a continuous real-enriched category   interpolates in the sense that $\mathfrak{w} \circ \mathfrak{w} =\mathfrak{w} $. \end{proposition}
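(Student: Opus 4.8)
The plan is to establish the two inequalities $\mathfrak{w}\circ\mathfrak{w}\leq\mathfrak{w}$ and $\mathfrak{w}\leq\mathfrak{w}\circ\mathfrak{w}$ separately. The first is immediate: the inequality $\mathfrak{w}(y,x)\leq X(y,x)$ noted above says exactly that $\mathfrak{w}$ lies below the identity distributor of $X$, so composing with the identity distributor on either side yields $\mathfrak{w}\circ\mathfrak{w}\leq X\circ\mathfrak{w}=\mathfrak{w}$. The substance is the reverse inequality, which is the real-enriched incarnation of the classical interpolation property of the way-below relation on a continuous dcpo, and the proof will follow the pattern of that classical argument.

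To prove $\mathfrak{w}\leq\mathfrak{w}\circ\mathfrak{w}$, fix $x\in X$ and consider the weight $\Phi$ of $X$ given by
\[
\Phi(y)=(\mathfrak{w}\circ\mathfrak{w})(y,x)=\sup_{z\in X}\mathfrak{w}(z,x)\with\mathfrak{w}(y,z).
\]
By Proposition \ref{way below as left adjoint} we have $\mathfrak{w}(-,z)=\wayb z$ for every $z$, so $\Phi=\sup_{z\in X}(\wayb x)(z)\with\wayb z$, and Corollary \ref{calculation of sup by tensors} identifies $\Phi$ with the colimit, formed in $\CP X$, of the functor $\wayb\colon X\to\CJ X\hookrightarrow\CP X$ weighted by the weight $\wayb x$ of $X$; that is, $\Phi=\colim_{\wayb x}(\wayb)=\colim\bigl(\wayb^\ra(\wayb x)\bigr)$. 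From this description I would extract two facts. \emph{Fact 1: $\Phi$ has colimit $x$.} Since $\wayb z\in\CJ X$ has colimit $z$, we have $\sub_X(\wayb z,\sy(u))=X(z,u)$ for every $u$; residuation then gives, for every $u\in X$,
\[
\sub_X(\Phi,\sy(u))=\inf_{z\in X}\bigl(\mathfrak{w}(z,x)\ra\sub_X(\wayb z,\sy(u))\bigr)=\inf_{z\in X}\bigl((\wayb x)(z)\ra X(z,u)\bigr)=\sub_X(\wayb x,\sy(u))=X(x,u),
\]
so that $X\swarrow\Phi=X(x,-)$ is representable, whence $\Phi$ has colimit $x$.

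\emph{Fact 2: $\Phi$ is an ideal of $X$.} Writing $\wayb x$ as the weight generated by a forward Cauchy net $\{x_i\}_{i\in D}$ of $X$ (Proposition \ref{characterization of ideal}), the image net $\{\wayb x_i\}_{i\in D}$ is forward Cauchy in $\CP X$ with all terms in $\CI X$. Since $\wayb^\ra\colon\CP X\to\CP(\CP X)$ is a left adjoint and sends each representable weight $\sy(a)$ to the representable weight $\sy(\wayb a)$, it preserves the Yoneda limit $\wayb x=\sup_{i\in D}\inf_{j\geq i}\sy(x_j)$, so $\wayb^\ra(\wayb x)=\sup_{i\in D}\inf_{j\geq i}\sy(\wayb x_j)$, and hence $\Phi=\colim\bigl(\wayb^\ra(\wayb x)\bigr)$ is a Yoneda limit of $\{\wayb x_i\}_{i\in D}$ in $\CP X$ by Lemmas \ref{Yoneda limits in PX} and \ref{yoneda limit as colimits}. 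As $\CI X$ is closed in $\CP X$ under Yoneda limits (Proposition \ref{IX is Yoneda complete}), we conclude $\Phi\in\CI X$.

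Combining the two facts, $\Phi\in\CJ X$ with $\colim\Phi=x$. Feeding $\Phi$ into the defining formula of $\mathfrak{w}$ then yields, for all $y\in X$,
\[
\mathfrak{w}(y,x)\leq X(x,\colim\Phi)\ra\Phi(y)=X(x,x)\ra\Phi(y)=\Phi(y)=(\mathfrak{w}\circ\mathfrak{w})(y,x),
\]
which is the desired inequality; together with the first paragraph this proves $\mathfrak{w}\circ\mathfrak{w}=\mathfrak{w}$. I expect the delicate point to be Fact 2: a direct check of the ideal axioms (I1)--(I2) for $\Phi$ is blocked because $\inf$ does not distribute over $p\with(-)$ for a general continuous t-norm, so one is forced to recognize $\Phi$ as a colimit weighted by an ideal — equivalently, as a Yoneda limit of a forward Cauchy net taking values in $\CI X$ — and to invoke the closure of $\CI X$ under Yoneda limits. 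The remaining steps are routine manipulations in the calculus of distributors.
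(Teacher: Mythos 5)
Your proof is correct, and its overall architecture coincides with the paper's: the inequality $\mathfrak{w}\circ\mathfrak{w}\leq\mathfrak{w}$ is immediate from $\mathfrak{w}\leq X$, and the reverse inequality is obtained by showing that $\Phi\coloneqq\mathfrak{w}\circ\mathfrak{w}(-,x)$ lies in $\CJ X$ with colimit $x$ and then feeding $\Phi$ into the defining infimum of $\mathfrak{w}$; your Fact 1 is essentially the paper's computation of the colimit. The genuine divergence is in Fact 2. The paper checks the ideal axioms (I1)--(I2) directly, by writing $\sub_X(\Phi,\lam)=\inf_{y}\bigl(\mathfrak{w}(y,x)\ra\sub_X(\mathfrak{w}(-,y),\lam)\bigr)=\sub_X(\mathfrak{w}(-,x),\Lambda_\lam)$, where $\Lambda_\lam(y)=\sub_X(\mathfrak{w}(-,y),\lam)$ is itself a weight of $X$; the required distributivity over $\vee$ and over $r\with(-)$ then follows because $\mathfrak{w}(-,x)$ and each $\mathfrak{w}(-,y)$ are ideals. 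So your closing remark that a direct check is ``blocked'' is not accurate --- the residuation trick unblocks it without ever needing $\inf$ to distribute over $p\with(-)$. Your alternative --- recognizing $\Phi$ as a Yoneda limit in $\CP X$ of the forward Cauchy net $\{\wayb x_j\}_{j\in D}$ of ideals and invoking the closure of $\CI X$ under Yoneda limits --- is also valid and arguably more conceptual ($\mathfrak{w}\circ\mathfrak{w}(-,x)$ is exhibited as a ``filtered colimit of ideals''), but it leans on two facts the paper leaves partly implicit: that left adjoints preserve Yoneda limits (true, and one line: if $G\dashv H$ then $B(Ga,z)=A(a,Hz)=\sup_i\inf_{j\geq i}A(a_j,Hz)=\sup_i\inf_{j\geq i}B(Ga_j,z)$ --- you should record this, since the paper never states it), and the closure of $\CI X$ under Yoneda limits, which is asserted in the proof of Proposition \ref{IX is Yoneda complete} with details left to the reader. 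With those two points made explicit, your argument closes.
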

	\begin{proof}  By Proposition \ref{way below as left adjoint}, $\mathfrak{w}(-,x)$ is the least element of $\CJ X$ having $x$ as colimit. Since $\mathfrak{w}(y,x)\leq X(y,x)$ for all $y,x\in X$, it follows that $\mathfrak{w} \circ \mathfrak{w}(-,x)\leq\mathfrak{w}(-,x)$, so  we  only need to show that $\mathfrak{w}\circ\mathfrak{w}(-,x)$ belongs to $\CI X$ and has $x$ as colimit. 
		
		\textbf{Step 1}. $\mathfrak{w}\circ\mathfrak{w}(-,x)\in \CI X$. 
		
		Let $\lam,\mu\in\CP X$. Since $$\sub_X(\mathfrak{w}(-,y),\lam)\with X(z,y)\with \mathfrak{w}(-,z)%\leq \sub_X(\mathfrak{w}(-,y),\lam) \with \mathfrak{w}(-,y)
		\leq \lam, $$ it follows that $$\sub_X(\mathfrak{w}(-,y),\lam)\with X(z,y)\leq \sub_X(\mathfrak{w}(-,z),\lam), $$ so the assignment $y\mapsto \sub_X(\mathfrak{w}(-,y),\lam)$ defines a weight of $X$. Then,  \begin{align*} &\quad\quad ~  \sub_X(\mathfrak{w}\circ\mathfrak{w}(-,x),\lam\vee\mu)\\ & =\bw_{y\in X}\sub_X(\mathfrak{w}(y,x)\with\mathfrak{w}(-,y),\lam\vee \mu)\\ &= \bw_{y\in X}\Big(\mathfrak{w}(y,x)\ra\sub_X(\mathfrak{w}(-,y),\lam\vee \mu)\Big)\\ &=\bw_{y\in X}\Big(\mathfrak{w}(y,x)\ra(\sub_X(\mathfrak{w}(-,y),\lam)\vee\sub_X(\mathfrak{w}(-,y),\mu)\Big)\\ &=\bw_{y\in X}\Big(\mathfrak{w}(y,x)\ra\sub_X(\mathfrak{w}(-,y),\lam)\Big)\vee\bw_{y\in X}\Big(\mathfrak{w}(y,x)\ra\sub_X(\mathfrak{w}(-,y),\mu)\Big)\\ &=\sub_X(\mathfrak{w}\circ\mathfrak{w}(-,x),\lam)\vee\sub_X(\mathfrak{w}\circ\mathfrak{w}(-,x),\mu).\end{align*} Likewise, one verifies that for all  $\lam\in\CP X$ and  $r\in[0,1]$, $$\sub_X(\mathfrak{w}\circ\mathfrak{w}(-,x),r\with\lam)=r\with\sub_X(\mathfrak{w}\circ\mathfrak{w}(-,x),\lam).$$  This proves that $\mathfrak{w}\circ\mathfrak{w}(-,x)\in \CI X$. 
		
		\textbf{Step 2}. $\colim\mathfrak{w}\circ\mathfrak{w}(-,x)=x$.
		
		For all $b\in X$, \begin{align*} X(x,b) &=  \bw_{y\in X}(\mathfrak{w}(y,x)\ra X(y,b)) &(\colim\mathfrak{w}(-,x)=x) \\ &= \bw_{y\in X}(\mathfrak{w}(y,x)\ra\CP X(\mathfrak{w}(-,y),X(-,b))) &(\colim\mathfrak{w}(-,y)=y)\\ &= \CP X(\colim\mathfrak{w}\circ\mathfrak{w}(-,x),X(-,b)) , \end{align*} hence $\colim\mathfrak{w}\circ\mathfrak{w}(-,x)=x$.
	\end{proof}

	The following definition is a direct extension of that for generalized metric spaces in \cite{BBR98,Goubault}.
	
	\begin{definition}
		An element $a$ of a real-enriched category $X$  is compact if the functor $$X(a,-)\colon X\to\sV$$  is Yoneda continuous, where $\sV=([0,1],\alpha_L)$.  \end{definition}
	
	\begin{lemma}\label{characterizing compact element} For each element $a$ of a real-enriched category $X$, the following  are equivalent: \begin{enumerate}[label=\rm(\arabic*)] \item    $a$   is  compact. \item $X(a,\colim\phi) =\phi(a)$ for all  $\phi\in\CJ X$.  \item $\mathfrak{w}(-,a)=X(-,a)$. \end{enumerate} \end{lemma}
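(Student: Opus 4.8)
The plan is to prove the two equivalences $(1)\Leftrightarrow(2)$ and $(2)\Leftrightarrow(3)$ separately, since the first is essentially an unravelling of the definition of Yoneda continuity whereas the second is a short computation with the way-below distributor.

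I would treat $(2)\Leftrightarrow(3)$ first, using two standing facts: $\mathfrak{w}(y,x)\leq X(y,x)$ for all $x,y$ (noted just after the definition of $\mathfrak{w}$), and, for each $\phi\in\CJ X$ with colimit $c$, the pointwise inequality $\phi\leq X(-,c)$, which follows from $\sub_X(\phi,X(-,c))=\CP X(\phi,\sy(c))=X(c,c)=1$. Granting $(2)$: for each $\phi\in\CJ X$ the distributor axiom for the weight $\phi$ gives $X(a,\colim\phi)\with X(y,a)=\phi(a)\with X(y,a)\leq\phi(y)$, hence $X(y,a)\leq X(a,\colim\phi)\ra\phi(y)$; taking the infimum over $\phi\in\CJ X$ yields $X(y,a)\leq\mathfrak{w}(y,a)$, and together with $\mathfrak{w}(y,a)\leq X(y,a)$ this is $(3)$. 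Granting $(3)$: for each $\phi\in\CJ X$ we have $X(y,a)=\mathfrak{w}(y,a)\leq X(a,\colim\phi)\ra\phi(y)$; putting $y=a$ gives $X(a,\colim\phi)\leq\phi(a)$, and since $\phi(a)\leq X(a,\colim\phi)$ by the pointwise inequality above, $(2)$ follows.

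For $(1)\Leftrightarrow(2)$ the plan is to read off exactly what Yoneda continuity of $X(a,-)\colon X\to\sV$ says. By Proposition \ref{characterization of  ideal}, every ideal of $X$ is of the form $\phi=\sup_{i\in D}\inf_{j\geq i}X(-,x_j)$ for some forward Cauchy net $\{x_i\}_{i\in D}$, and by Lemma \ref{yoneda limit as colimits} the Yoneda limits of $\{x_i\}_{i\in D}$ are precisely the colimits of $\phi$; thus the forward Cauchy nets of $X$ possessing a Yoneda limit are exactly those whose generated ideal lies in $\CJ X$. Being a functor, $X(a,-)$ carries such a net to the forward Cauchy net $\{X(a,x_i)\}_{i\in D}$ of $\sV$, and applying Lemma \ref{Yoneda limits in PX} to the terminal category --- whose presheaf category is $\sV$ itself --- identifies the (unique, since $\sV$ is separated) Yoneda limit of $\{X(a,x_i)\}_{i\in D}$ in $\sV$ as $\sup_{i\in D}\inf_{j\geq i}X(a,x_j)$, which by the definition of $\phi$ equals $\phi(a)$. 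Hence $X(a,-)$ preserves Yoneda limits of forward Cauchy nets if and only if $X(a,\colim\phi)=\phi(a)$ for every $\phi\in\CJ X$, i.e. $(2)$. (The possible non-separatedness of $X$ is harmless: distinct colimits of $\phi$ are isomorphic and $\sV$ is separated, so $X(a,\colim\phi)$ is well defined.)

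I expect the bookkeeping in $(1)\Leftrightarrow(2)$ to be the only delicate part: one must keep the variances straight --- these are Yoneda limits, not bilimits, and $X(a,-)$ is read as a coweight --- and must correctly pin down the Yoneda limit of the image net inside $\sV$, which is where the reduction ``$\CP\star=\sV$'' of Lemma \ref{Yoneda limits in PX} does the work. The chain $(2)\Leftrightarrow(3)$ is pure implication calculus together with the distributor axioms, so no real obstacle arises there.
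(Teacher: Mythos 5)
Your proposal is correct. The equivalence $(2)\Leftrightarrow(3)$ is handled essentially as in the paper: both arguments rest on the two bounds $\mathfrak{w}(-,a)\leq X(-,a)$ (witnessed by the representable $X(-,a)\in\CJ X$) and $X(y,a)\leq X(a,\colim\phi)\ra\phi(y)$ coming from the weight axiom, and your shortcut of putting $y=a$ in $(3)\Rightarrow(2)$ is a slightly cleaner version of the paper's detour through $\CP X(\mathfrak{w}(-,a),\phi)$. Where you genuinely diverge is $(1)\Leftrightarrow(2)$: the paper reads Yoneda continuity of $X(a,-)$ as preservation of colimits of ideals and disposes of the equivalence in one line via Example \ref{cpt as colimit}, namely $\colim_\phi X(a,-)=\phi\circ X(a,-)=\phi(a)$, whereas you read it as preservation of Yoneda limits of forward Cauchy nets and route the argument through Proposition \ref{characterization of  ideal}, Lemma \ref{yoneda limit as colimits}, and Lemma \ref{Yoneda limits in PX} applied to $\CP\star\cong\sV$ to compute the Yoneda limit of the image net as $\sup_{i\in D}\inf_{j\geq i}X(a,x_j)=\phi(a)$. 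Both routes are legitimate because the paper's Section 3 establishes exactly the dictionary (ideals $\leftrightarrow$ forward Cauchy nets, colimits of ideals $\leftrightarrow$ Yoneda limits) that makes the two readings of Yoneda continuity coincide; the paper's version is shorter and purely categorical, while yours makes the net-theoretic content explicit at the cost of invoking three auxiliary results, and you are right to flag the variance bookkeeping and the separatedness of $\sV$ as the only delicate points there.
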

	
	\begin{proof} For all $\phi\in \CJ X$, by Example \ref{cpt as colimit} the colimit of   $X(a,-)\colon X\to\sV$ weighted by $\phi$ is equal to the composite of $X(a,-)$ and $\phi$ as distributors; that is,   $\colim_\phi  X(a,-) = \phi\circ X(a,-)=\phi(a)$.  This proves $(1)\Leftrightarrow(2)$.  
		
		$(2)\Rightarrow(3)$ For each $y\in X$, \begin{align*}\mathfrak{w}(y,a)&=\inf_{\phi\in\CJ X}(X(a,\colim\phi)\ra\phi(y))\\ & = \inf_{\phi\in\CJ X}(\phi(a)\ra\phi(y))\\ & = X(y,a).\end{align*}
		
		$(3)\Rightarrow(2)$ Since taking colimit is functorial, then $$\phi(a)=\CP X(X(-,a),\phi)\leq X(a,\colim\phi) $$ for all $\phi\in\CJ X$. On the other hand, since, by definition,  $$\mathfrak{w}(y,a)=\inf_{\phi\in\CJ X}(X(a,\colim\phi)\ra\phi(y)),$$  then $X(a,\colim\phi)\leq \mathfrak{w}(y,a)\ra \phi(y)$ for all $y\in X$ and all $\phi\in\CJ X$,  hence $$X(a,\colim\phi)\leq \CP X(\mathfrak{w}(-,a),\phi)=\CP X(X(-,a),\phi)=\phi(a).$$ Therefore, $X(a,\colim\phi)=\phi(a)$. \end{proof}
	
	A real-enriched category $X$ is  \emph{algebraic} if it is Yoneda complete and  each of its elements is a Yoneda limit of a forward Cauchy net consisting of compact elements of $X$. Said differently, $X$  is algebraic if it is Yoneda complete and for each $x\in X$, there is an ideal $\phi$ of the real-enriched category $K(X)$ composed of compact elements of $X$ such that $x$ is a colimit of the inclusion functor $i\colon K(X)\to X$ weighted by $\phi$.  
	
	\begin{example}\label{compact element of V}  Consider the real-enriched category $\sV=([0,1],\alpha_L)$.  For each $a\in[0,1]$, let $a^+$ be the least idempotent element in $[a,1]$. If $0<a\leq a^+<1$, consider the ideal $\phi=\bv_{x<a}\sV(-,x)$. Since $\colim\phi =a$ and  $$\phi(a)=\bv_{x<a}(a\ra x)\leq a^+<1=\sV(a,\colim\phi),$$ if follows that $a$ is not compact. By this fact one readily verifies that $\sV$ is algebraic if and only if the continuous t-norm $\with$ is Archimedean. Likewise, $\sV^{\rm op}$ is algebraic if and only if $\with$ is Archimedean. \end{example}
	
	\begin{proposition}\label{algebraic is continuous}  Every  algebraic real-enriched category is a real-enriched domain.   \end{proposition}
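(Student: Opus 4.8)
The plan is to show that an algebraic real-enriched category $X$ is continuous; since $X$ is Yoneda complete by definition, this makes $X$ a real-enriched domain. By Proposition~\ref{way below as left adjoint} it suffices to check that for every $x\in X$ the weight $\mathfrak{w}(-,x)$ belongs to $\CJ X$ and has $x$ as a colimit, and since any weight that happens to be an ideal possessing a colimit automatically lies in $\CJ X$, it is enough to prove that $\mathfrak{w}(-,x)$ is an ideal of $X$ with colimit $x$.

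First I would fix $x\in X$ and, invoking algebraicity, pick an ideal $\phi$ of the real-enriched category $K(X)$ of compact elements such that $x=\colim_\phi i$, where $i\colon K(X)\to X$ is the inclusion. Put $\psi\coloneqq i^\rightarrow(\phi)=\phi\circ i^*$. Then $\psi$ is an ideal of $X$, being the image of an ideal under $i^\rightarrow$, and $\colim\psi=\colim_\phi i=x$; in particular $\psi\leq X(-,x)$, whence $\phi(a)\leq\psi(a)\leq X(a,x)$ for every compact $a$. The crux of the proof is the identity $\psi=\mathfrak{w}(-,x)$.

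To get $\psi\leq\mathfrak{w}(-,x)$, I would fix $\phi'\in\CJ X$ and $y\in X$ and argue pointwise over the compact elements $a$: using $\phi(a)\leq X(a,x)$ and the compactness of $a$ (Lemma~\ref{characterizing compact element}), $\phi(a)\with X(x,\colim\phi')\leq X(a,x)\with X(x,\colim\phi')\leq X(a,\colim\phi')=\phi'(a)$, hence $\phi(a)\with X(y,a)\with X(x,\colim\phi')\leq X(y,a)\with\phi'(a)\leq\phi'(y)$; taking the supremum over compact $a$ gives $\psi(y)\with X(x,\colim\phi')\leq\phi'(y)$, i.e.\ $\psi(y)\leq X(x,\colim\phi')\ra\phi'(y)$, and then the infimum over $\phi'\in\CJ X$ yields $\psi(y)\leq\mathfrak{w}(y,x)$. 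For the reverse inequality, $\psi$ itself belongs to $\CJ X$ with $\colim\psi=x$, so plugging $\phi'=\psi$ into the defining infimum of $\mathfrak{w}$ gives $\mathfrak{w}(y,x)\leq X(x,x)\ra\psi(y)=\psi(y)$. Thus $\mathfrak{w}(-,x)=\psi\in\CJ X$ with colimit $x$, so $X$ is continuous and therefore a real-enriched domain.

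I do not expect a serious obstacle; the step requiring the most care is $\psi\leq\mathfrak{w}(-,x)$, where one must correctly combine the compactness of the elements in the support of $\phi$ with the bound $\phi(a)\leq X(a,x)$ extracted from $x$ being the colimit of $\psi$. A minor point worth double-checking is that $i^\rightarrow$ indeed sends the ideal $\phi$ of $K(X)$ to an ideal of $X$ and that $\colim\bigl(i^\rightarrow(\phi)\bigr)=\colim_\phi i$, both of which are immediate from the definitions recalled earlier.
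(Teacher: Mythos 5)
Your proof is correct. It takes a route that is organized differently from the paper's, though it rests on the same two pillars (compactness of the approximating elements and the colimit property of $x$). The paper does not go through Proposition~\ref{way below as left adjoint} at all: it picks a forward Cauchy net $\{a_i\}_{i\in D}$ of compact elements with Yoneda limit $a$, defines $\wayb a$ as the Yoneda limit of $\{\sy(a_i)\}_{i\in D}$ inside $\CI X$ (existence by Proposition~\ref{IX is Yoneda complete}), and verifies the adjunction identity $\CI X(\wayb a,\phi)=X(a,\colim\phi)$ in a single four-line computation (Yoneda limit property, Yoneda lemma, compactness of the $a_j$, Yoneda limit property again). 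You instead push the ideal $\phi$ of $K(X)$ forward to $\psi=\phi\circ i^*$ and prove the pointwise identity $\psi=\mathfrak{w}(-,x)$, then invoke the earlier characterization of continuity via the way-below distributor. The two constructions produce the same weight, and your two inequalities $\psi\leq\mathfrak{w}(-,x)$ and $\mathfrak{w}(-,x)\leq\psi$ correspond to the two directions of the paper's adjunction identity. What your version buys is an explicit formula for the way-below weight of an algebraic category ($\mathfrak{w}(-,x)=\phi\circ i^*$), which is a pleasant by-product; what the paper's version buys is brevity and the avoidance of the commutativity/triangle-inequality bookkeeping in your chain of inequalities. All the auxiliary facts you flag for double-checking ($i^\ra$ preserves ideals, $\colim(i^\ra(\phi))=\colim_\phi i$, and $\psi\leq X(-,x)$ from $\colim\psi=x$) are indeed immediate from the definitions recalled in Section~\ref{As a categorical property}.
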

	
	\begin{proof}  
		It suffices to check that the functor $\colim\colon \CI X  \to X$   has a left adjoint. For each  $a\in X$, take a forward Cauchy net $\{a_i\}_{i\in D}$ of compact elements of $X$ with $a$ as a Yoneda limit, let $\wayb a$ be the Yoneda limit of the forward Cauchy net $\{\sy(a_i)\}_{i\in D}$ in $\CI X$ (existence is guaranteed by Proposition \ref{IX is Yoneda complete}). Then, for every ideal $\phi$ of $X$, \begin{align*} \CI X(\wayb a,\phi) &=\inf_{i\in D}\sup_{j\geq i}\CI X(\sy(a_j), \phi)&(\text{$\wayb  a$ is a Yoneda limit})\\ &=\inf_{i\in D}\sup_{j\geq i}\phi(a_j)&({\rm Yoneda~lemma}) \\ &=\inf_{i\in D}\sup_{j\geq i}X(a_j, \colim\phi)&( a_j{\rm~is~compact}) \\ &=X(a,\colim\phi),&(a~ \text{is a Yoneda limit}) \end{align*} hence $\wayb$ is a left adjoint of $\colim$. \end{proof}
	
	\begin{proposition}For a Smyth completable real-enriched category $X$, the functor $\sy\colon X\to\CJ X$ is both left and right adjoint to the functor $\colim\colon \CJ X\to X$. In particular, $X$ is continuous. \end{proposition}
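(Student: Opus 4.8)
The plan is to show that, under the hypothesis of Smyth completability, the functors $\sy\colon X\to\CJ X$ and $\colim\colon\CJ X\to X$ are mutually inverse isomorphisms of categories, and then to read off both adjunctions formally. Recall that one always has $\colim\dashv\sy\colon X\to\CJ X$, so $\sy$ is already a right adjoint of $\colim$; what remains is to produce the adjunction $\sy\dashv\colim$.

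First I would invoke Theorem \ref{characterizing Smyth completable}: since $X$ is Smyth completable, every ideal of $X$ is a Cauchy weight. As $\CJ X\subseteq\CI X$, every $\phi\in\CJ X$ is therefore a Cauchy weight that possesses a colimit, and hence is representable; that is, $\phi=X(-,\colim\phi)=\sy(\colim\phi)$. Consequently $\sy\circ\colim=\id_{\CJ X}$. Conversely, for each $a\in X$ the colimit of the representable weight $\sy(a)$ is $a$ itself: the adjunction $\colim\dashv\sy$ gives $X(\colim\sy(a),x)=\CJ X(\sy(a),\sy(x))=X(a,x)$ for all $x\in X$, and $X$ is separated since it is Smyth completable, so $\colim\circ\sy=\id_X$. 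Thus $\sy$ and $\colim$ are mutually inverse isomorphisms between $X$ and $\CJ X$.

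From this the adjunction $\sy\dashv\colim$ is immediate: for $a\in X$ and $\phi\in\CJ X$, using $\phi=\sy(\colim\phi)$ together with the full faithfulness of $\sy$,
\[\CJ X(\sy(a),\phi)=\CJ X(\sy(a),\sy(\colim\phi))=X(a,\colim\phi),\]
which is precisely the defining identity of the adjunction $\sy\dashv\colim$. Hence $\sy$ is simultaneously a left and a right adjoint of $\colim$, and in particular $\colim\colon\CJ X\to X$ admits a left adjoint, so $X$ is continuous (in fact $\wayb=\sy$ by uniqueness of left adjoints, so by Proposition \ref{way below as left adjoint} and Lemma \ref{characterizing compact element} every element of $X$ is compact).

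The only non-formal ingredient in all of this is the appeal to Theorem \ref{characterizing Smyth completable} to guarantee that every weight lying in $\CJ X$ is representable; once that is in hand, the remaining steps are routine manipulations of the Yoneda embedding, so I do not anticipate a genuine obstacle.
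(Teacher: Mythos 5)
Your proposal is correct and follows essentially the same route as the paper's proof: both hinge on Theorem \ref{characterizing Smyth completable} to see that every $\phi\in\CJ X$ is a Cauchy weight with a colimit, hence representable, after which the adjunction identity $\CJ X(\sy(a),\phi)=\phi(a)=X(a,\colim\phi)$ drops out of the Yoneda lemma. Your repackaging via the mutually inverse isomorphisms $\sy\circ\colim=\id_{\CJ X}$ and $\colim\circ\sy=\id_X$ is only a cosmetic variant of the paper's formulation (which phrases the same fact as ``every element of $X$ is compact'').
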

	
	\begin{proof} First we show that every element of $X$ is compact; that means, $X(a,\colim\phi)=\phi(a)$ for all $a\in X$ and all $\phi\in \CJ X$. Since $X$ is Smyth completable, $\phi$ is   Cauchy, hence representable because it has a colimit. This means $\phi=X(-,b)$ for some $b\in X$. Therefore, \begin{align*} X(a,\colim\phi)   &= X(\colim X(-,a),b) \\ &=\CP X(X(-,a), X(-,b))\\ &=\CP X(X(-,a), \phi)\\ &=\phi(a).  \end{align*}
		
		Next we show that   $\sy\colon X\to\CJ X$   is left adjoint to $\colim\colon \CJ X\to X$. This is easy, since $ \CJ X(\sy(a),\phi)=\phi(a)= X(a,\colim\phi)$ for all $a\in X$ and $\phi\in \CJ X$. \end{proof}
	
	%There exists a real-enriched category $X$ such that $\sy\colon X\to\CJ X$ is both left and right adjoint to $\colim\colon \CJ X\to X$, but $X$ is not Smyth completable.
	
	\begin{corollary}Every Smyth complete real-enriched category is algebraic, hence a real-enriched domain. \end{corollary}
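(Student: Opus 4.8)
The plan is to reduce everything to two facts that have already been established: in any Smyth completable real-enriched category every element is compact, and a Smyth complete real-enriched category is Yoneda complete. First I would observe that a Smyth complete real-enriched category $X$ is trivially Smyth completable, since the identity functor $\id\colon X\to X$ is fully faithful and $X$ itself is Smyth complete. Hence the preceding proposition applies and tells us that every element of $X$ is compact, i.e. $X(a,\colim\phi)=\phi(a)$ for all $a\in X$ and all $\phi\in\CJ X$; moreover $X$ is continuous. On the other hand, by Theorem \ref{Smyth via bilimit} a Smyth complete real-enriched category is Yoneda complete.

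Next I would verify that $X$ is algebraic directly from the definition. Yoneda completeness is in hand, so it remains to exhibit, for each $x\in X$, a forward Cauchy net of compact elements of $X$ whose Yoneda limit is $x$. Since every element of $X$ is compact, the obvious candidate works: take the constant net $\{x_i\}_{i\in D}$ with $x_i=x$ (indexed by any directed set). This net is forward Cauchy because $\sup_{i\in D}\inf_{k\geq j\geq i}X(x_j,x_k)=X(x,x)=1$, it consists of compact elements, and it has $x$ as a Yoneda limit since $\sup_{i\in D}\inf_{j\geq i}X(x_j,z)=X(x,z)=X(x,z)$ for every $z\in X$. Therefore each $x\in X$ is a Yoneda limit of a forward Cauchy net of compact elements, so $X$ is algebraic.

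Finally, Proposition \ref{algebraic is continuous} says that every algebraic real-enriched category is a real-enriched domain, which gives the second half of the statement. Strictly speaking there is no genuine obstacle here: all the substance is in the compactness-of-all-elements fact coming from the preceding proposition, after which the constant-net argument is immediate. The only point that deserves a word of care is checking that the constant net legitimately meets the definition of \emph{algebraic} (it is indeed forward Cauchy, and its Yoneda limit—equivalently, the colimit of the representable weight $X(-,x)$, cf. Lemma \ref{characterizing compact element}—is $x$), and that Yoneda completeness is genuinely available; both are dispatched by the results already proved.
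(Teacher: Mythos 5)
Your proof is correct and follows exactly the route the paper intends: Smyth complete $\Rightarrow$ Smyth completable (via the identity), so every element is compact by the preceding proposition; Yoneda completeness comes from Theorem \ref{Smyth via bilimit}; the constant net then witnesses algebraicity, and Proposition \ref{algebraic is continuous} finishes the job. Nothing to add.
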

	
	\begin{corollary}For a Yoneda complete real-enriched category $X$, the following are equivalent:  \begin{enumerate}[label={\rm(\arabic*)}]  
			\item $X$ is Smyth complete.  \item  The functor $\sy\colon X\to\CI X$  is both  left   and  right adjoint to  the functor $\colim\colon \CI X\to X$. \item  Every element of $X$ is compact.  
	\end{enumerate}\end{corollary}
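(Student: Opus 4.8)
The plan is to prove the cycle of implications $(1)\Rightarrow(2)\Rightarrow(3)\Rightarrow(1)$. The first thing I would record is that, since $X$ is Yoneda complete, every ideal of $X$ has a colimit, so by Proposition~\ref{Yoneda com via ideals} we have $\CJ X=\CI X$; in particular $\colim\colon\CI X\to X$ is everywhere defined and the Yoneda embedding $\sy\colon X\to\CI X$ is its right adjoint. Hence the real content of statement (2) is the \emph{extra} assertion $\sy\dashv\colim$, i.e.\ $\CI X(\sy(a),\phi)=X(a,\colim\phi)$ for all $a\in X$ and $\phi\in\CI X$. For $(1)\Rightarrow(2)$ I would argue that a Smyth complete real-enriched category is separated by Proposition~\ref{FC in Smyth is C}, hence Smyth completable via the identity functor; and it was shown above that for a Smyth completable real-enriched category the functor $\sy\colon X\to\CJ X=\CI X$ is both left and right adjoint to $\colim$, which is exactly (2).

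For $(2)\Rightarrow(3)$ the key observation is that, by the Yoneda lemma, $\CI X(\sy(a),\phi)=\phi(a)$ for every $a\in X$ and $\phi\in\CI X$. Thus the hypothesis $\sy\dashv\colim$ reads $\phi(a)=X(a,\colim\phi)$ for all $a\in X$ and all $\phi\in\CI X=\CJ X$, which is precisely condition~(2) of Lemma~\ref{characterizing compact element}; so every element of $X$ is compact.

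For $(3)\Rightarrow(1)$ my plan is to verify the hypotheses of Theorem~\ref{Smyth via bilimit}(3), namely that $X$ is separated and every ideal of $X$ is representable. Separatedness is already part of Yoneda completeness: a constant net at $a$ has $a$ as a Yoneda limit, and any $b$ isomorphic to $a$ is one too, so uniqueness of Yoneda limits forces $a=b$. Now let $\phi$ be an ideal of $X$; by Yoneda completeness $a\coloneqq\colim\phi$ exists, and since $a$ is compact Lemma~\ref{characterizing compact element} gives $\phi(a)=X(a,\colim\phi)=X(a,a)=1$. From $\phi(a)=1$ together with the defining inequality $\phi(a)\with X(x,a)\leq\phi(x)$ of a weight we get $X(-,a)\leq\phi$; conversely, $\sy(x)$ is a representable, hence an ideal, and $\colim\colon\CI X\to X$ is a functor, so $\phi(x)=\CI X(\sy(x),\phi)\leq X(\colim\sy(x),\colim\phi)=X(x,a)$, using $\colim\sy(x)=x$. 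Hence $\phi=X(-,a)$ is representable, and Theorem~\ref{Smyth via bilimit} then yields that $X$ is Smyth complete.

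I expect no genuine obstacle: the argument is essentially mechanical once the Yoneda lemma in $\CI X$ is unwound. The one point that needs care is to use ``Yoneda complete'' at full strength---both to guarantee $\CJ X=\CI X$ (so that colimits of \emph{all} ideals are available) and to obtain separatedness for free---and to recall from Lemma~\ref{characterizing compact element} that a compact colimit forces the weight producing it to be representable, which is what converts ``every element compact'' into ``every ideal representable''.
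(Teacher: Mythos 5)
Your proof is correct and follows exactly the route the paper intends (the corollary is stated without proof, but it is meant to be assembled from the preceding proposition on Smyth completable categories, the Yoneda lemma in $\CI X=\CJ X$, Lemma~\ref{characterizing compact element}, and Theorem~\ref{Smyth via bilimit}, just as you do). Your care in noting that the uniqueness clause in ``Yoneda complete'' supplies separatedness, and that compactness of $\colim\phi$ forces $\phi$ to be representable, are precisely the two points that make the cycle close.
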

	
	\bibliographystyle{plain}
	
\end{document}